\newcounter{braid}
\newcounter{strands}
\def\cross{%
  \@ifnextchar^{\message{Got sup}\cross@sup}{\cross@sub}}
\def\cross@sup^#1_#2{\render@cross{#2}{#1}}
\def\cross@sub_#1{\@ifnextchar^{\cross@@sub{#1}}{\render@cross{#1}{1}}}
\def\cross@@sub#1^#2{\render@cross{#1}{#2}}
\def\render@cross#1#2{
  \def\strand{#1}
  \def\crossing{#2}
  \pgfmathsetmacro{\cross@y}{-\value{braid}*\braid@h}
  \pgfmathtruncatemacro{\nextstrand}{#1+1}
  \foreach \thread in {1,...,\value{strands}}
  {
    \pgfmathsetmacro{\strand@x}{\thread * \braid@w}
    \ifnum\thread=\strand
    \pgfmathsetmacro{\over@x}{\strand * \braid@w + .5*(1 - \crossing) * \braid@w}
    \pgfmathsetmacro{\under@x}{\strand * \braid@w + .5*(1 + \crossing) * \braid@w}
    \draw[braid] \pgfkeysvalueof{/tikz/braid start} +(\under@x pt,\cross@y pt) to[out=-90,in=90] +(\over@x pt,\cross@y pt -\braid@h);
    \draw[braid] \pgfkeysvalueof{/tikz/braid start} +(\over@x pt,\cross@y pt) to[out=-90,in=90] +(\under@x pt,\cross@y pt -\braid@h);
    \else
    \ifnum\thread=\nextstrand
    \else
     \draw[braid] \pgfkeysvalueof{/tikz/braid start} ++(\strand@x pt,\cross@y pt) -- ++(0,-\braid@h);
    \fi
   \fi
  }
  \stepcounter{braid}
}
\tikzset{braid/.style={double=\pgfkeysvalueof{/tikz/braid colour},double distance=1pt,line width=2pt,white}}
\newcommand{\braid}[2][]{%
  \begingroup
  \pgfkeys{/tikz/strands=2}
  \tikzset{#1}
  \pgfkeysgetvalue{/tikz/braid width}{\braid@w}
  \pgfkeysgetvalue{/tikz/braid height}{\braid@h}
  \setcounter{braid}{0}
  \let\g=\cross
  #2
  \endgroup
}
\newcommand{\brw}{17}
\def\begeqar{\begin{eqnarray}}
\def\endeqar{\end{eqnarray}}
\def\begeq{\begin{equation}}
\def\endeq{\end{equation}}
\def\wgta#1#2#3#4{\hbox{\rlap{\lower.35cm\hbox{$#1$}}
\hskip.2cm\rlap{\raise.25cm\hbox{$#2$}}
\rlap{\vrule width1.3cm height.4pt}
\hskip.55cm\rlap{\lower.6cm\hbox{\vrule width.4pt height1.2cm}}
\hskip.15cm
\rlap{\raise.25cm\hbox{$#3$}}\hskip.25cm\lower.35cm\hbox{$#4$}\hskip.6cm}}
\def\wgtb#1#2#3#4{\hbox{\rlap{\raise.25cm\hbox{$#2$}}
\hskip.2cm\rlap{\lower.35cm\hbox{$#1$}}
\rlap{\vrule width1.3cm height.4pt}
\hskip.55cm\rlap{\lower.6cm\hbox{\vrule width.4pt height1.2cm}}
\hskip.15cm
\rlap{\lower.35cm\hbox{$#4$}}\hskip.25cm\raise.25cm\hbox{$#3$}\hskip.6cm}}
\newcommand{\vv}{\mathsf{v}}
\newcommand{\ww}{\mathsf{w}}
\newcommand{\uu}{\mathsf{u}}
\newcommand{\one}{\boldsymbol{1}}
\newcommand{\ATL}[1]{\mathsf{T}^a_{#1}}
\newcommand{\fH}[1]{\mathsf{H}_{#1}}
\newcommand{\AH}[1]{\widehat{\mathsf{H}}_{#1}}
\newcommand{\TL}[1]{\mathsf{TL}_{#1}}
\newcommand{\rmi}{\mathrm{i}}
\newcommand{\q}{\mathfrak{q}}
\newcommand{\ffrac}[2]{\mbox{\footnotesize$\displaystyle\frac{#1}{#2}$}}
\newcommand{\half}{%
  \mathchoice{\ffrac{1}{2}}{\frac{1}{2}}{\frac{1}{2}}{\frac{1}{2}}}
\newcommand{\tensor}{\otimes}
\newcommand{\fus}{\times_{\!f}}
\newcommand{\afus}{\,\widehat{\times}_{\!f}\,}
\newcommand{\ahfus}{\,\widehat{\times}_{\!f}^{\mathsf{H}}\,}
\newcommand{\afusm}{\,\widehat{\times}^{-}_{\!f}\,}
\newcommand{\Hom}{\mathrm{Hom}}
\newcommand{\ass}{\alpha}
\newcommand{\aass}{\alpha}
\newcommand{\liso}{\ell}
\newcommand{\riso}{r}
\newcommand{\id}{\mathrm{id}}
\newcommand{\fuslim}{\otimes_{\catTLinf}}
\newcommand{\afuslim}{\otimes_{\catATLinf}}
\newcommand{\asslim}{\ass^{\catTLinf}}
\newcommand{\aasslim}{\ass^{\catATLinf}}
\newcommand{\brlim}{\br^{\catTLinf}}
\newcommand{\bAStTL}[2]{\overline{\mathcal{W}}_{#1,#2}}
\newcommand{\IrTL}[1]{\mathcal{X}_{#1}}
\newcommand{\PrTL}[1]{\mathcal{P}_{#1}}
\newcommand{\StTL}[1]{\mathcal{W}_{#1}}
\newcommand{\StJTL}[2]{\mathcal{W}_{#1,#2}}
\newcommand{\StATL}[1]{\mathcal{W}_{#1}}
\newcommand{\catTL}{\mathsf{C}}
\newcommand{\catTLinf}{\catTL_{\infty}}
\newcommand{\catTLev}{\catTLinf^{\mathrm{ev}}}
\newcommand{\catTLodd}{\catTLinf^{\mathrm{odd}}}
\newcommand{\catATL}{\widehat{\catTL}}
\newcommand{\catATLinf}{\widehat{\catTL}_{\infty}}
\newcommand{\catATLev}{\widehat{\catTL}_{\infty}^{\mathrm{ev}}}
\newcommand{\catATLodd}{\widehat{\catTL}_{\infty}^{\mathrm{odd}}}
\newcommand{\oN}{\mathbb{N}}
\newcommand{\oC}{\mathbb{C}}
\newcommand{\oZ}{\mathbb{Z}}
\newcommand{\modd}{\,\mathrm{mod}\,}
\newcommand{\g}{g}
\newcommand{\e}{\mathsf{\,e_{(N)}}}
\newcommand{\es}{\mathsf{e}}
\newcommand{\funLoc}{\mathcal{L}}
\newcommand{\funGl}{\mathcal{G}}
\newcommand{\funLocl}{\funLoc^l}
\newcommand{\funGll}{\funGl^{l}}
\newcommand{\afunLoc}{\widehat{\mathcal{L}}}
\newcommand{\afunGl}{\widehat{\mathcal{G}}}
\newcommand{\afunGll}{\afunGl^{l}}
\newcommand{\I}{\mathsf{I}}
\newcommand{\ue}{\omega}
\newcommand{\Ind}{\mathrm{Ind}}
\newcommand{\drawu}{
 %%%%%%%%% Frame %%%%%%%%
 	\draw[thick, dotted] (-0.05,0.5) arc (0:10:0 and -7.5);
 	\draw[thick, dotted] (-0.05,0.55) -- (3.25,0.55);
 	\draw[thick, dotted] (3.25,0.5) arc (0:10:0 and -7.5);
	\draw[thick, dotted] (-0.05,-0.85) -- (3.25,-0.85);
%%%%%%%%%%%%	
	\draw[thick] (0.3,0.5) arc (0:10:20 and -3.75);
	\draw[thick] (2.7,-0.81) arc (0:10.6:-30 and 3.75);

	\draw[thick] (0.9,0.5) arc (0:10:40 and -7.6);
	\draw[thick] (1.5,0.5) arc (0:10:40 and -7.6);
	\draw[thick] (2.1,0.5) arc (0:10:40 and -7.6);
	\draw[thick] (2.7,0.5) arc (0:10:40 and -7.6);
}
\newcommand{\stemb}{\iota}
\newcommand{\fk}{\mathbb{K}}
\newcommand{\VF}{\mathscr{FF}}
\newcommand{\VK}{\mathscr{K}}
\newcommand{\VP}{\mathscr{P}}
\newcommand{\Verma}{\mathscr{V}}
\newtheorem{Thm}[subsection]{Theorem}
\newtheorem{Lemma}[subsection]{Lemma}
\newtheorem{lemma}[subsubsection]{Lemma}
\newtheorem{Prop}[subsection]{Proposition}
\newtheorem{prop}[subsubsection]{Proposition}
\newtheorem{cor}[subsubsection]{Corollary}
\newtheorem{Conj}[subsection]{Conjecture}
\newtheorem{conj}[subsubsection]{Conjecture}
\newtheorem{Dfn}[subsection]{Definition}
\newtheorem{dfn}[subsubsection]{Definition}
\newtheorem{rem}[subsubsection]{Remark}
\begin{document}
\begin{flushright}
{\footnotesize
ZMP-HH/16-12\\
Hamburger Beitr\"age zur Mathematik 596
}
\end{flushright}
%\vskip 1em

%\title{Fusion and braiding in finite and  affine Temperley--Lieb categories}
\title{}
\maketitle
\begin{center}
{\Large Fusion and braiding in finite and  affine Temperley--Lieb \\
\vskip0.2cm
categories}

\vskip 1cm

{\large A.M. Gainutdinov $^{a,b}$ and H. Saleur $^{c,d}$}

\vspace{1.0cm}

{\sl\small $^a$
DESY, Theory Group, Notkestrasse 85, Bldg. 2a, 22603 Hamburg, Germany\\}
\vskip0.1cm
{\sl\small $^b$  Laboratoire de Math\'ematiques et Physique Th\'eorique CNRS,\\
Universit\'e de Tours,
Parc de Grammont, 37200 Tours, 
France\\}
\vskip0.1cm
{\sl\small $^c$  Institut de Physique Th\'eorique, CEA Saclay,
Gif Sur Yvette, 91191, France\\}
\vskip0.1cm
{\sl\small $^d$ Department of Physics and Astronomy,
University of Southern California,\\
Los Angeles, CA 90089, USA\\}
\end{center}

\vskip1cm

\begin{abstract} 

Finite Temperley--Lieb algebras are  ``diagram algebra'' quotients of (the group algebra of) the famous Artin's braid group $\mathbb{B}_N$, while the affine Temperley--Lieb algebras arise as  diagram algebras from a generalized version of the braid group. 
We study  asymptotic ``$N\to\infty$'' representation theory of these quotients (parametrized by $\q\in\oC^{\times}$) from a perspective of braided monoidal categories.

Using certain idempotent subalgebras in the finite and affine algebras, we construct infinite ``arc'' towers of the diagram algebras and
the corresponding direct system of representation categories, with terms labeled by $N\in\oN$.

The corresponding direct-limit category is our main object of studies.

For the  case of the finite 
Temperley--Lieb algebras, we prove that the direct-limit category is abelian and highest-weight at any $\q\in\oC^{\times}$ and  endowed with braided monoidal structure.
The most interesting result is when $\q$ is a root of unity  where the representation theory is non-semisimple. The resulting braided monoidal categories we obtain at different roots of unity are new and interestingly they are not rigid. We observe then a fundamental relation of these categories to a certain representation category of the Virasoro algebra and give a conjecture on the existence of a braided monoidal equivalence between the categories.
% The conjecture is based on previous findings of the two authors and heuristical considerations from statistical physics.
 This should have powerful   applications to the study of the ``continuum'' 
  limit of critical statistical mechanics systems  
based on the Temperley--Lieb algebra. 

We also introduce a novel class of embeddings for the affine Temperley--Lieb algebras 
(algebras of diagrams drawn without intersections on the surface of a cylinder) 
and related new concept of fusion or bilinear $\oN$-graded tensor product of modules for these algebras. We prove that the fusion rules are stable with the index $N$ of the tower and prove that the corresponding  direct-limit category is endowed with an associative tensor product. We also study 
the braiding properties of this affine  Temperley--Lieb fusion. 
Potential relationship with representations of the product of two Virasoro algebras are left for future work.
\end{abstract}

\bigskip

\section{Introduction}

Let $\fk$ be a field and $\q$ is an invertible element in $\fk$. Graham and Lehrer introduced in~\cite{GL}  a sequence $\ATL{N}$ (with $N = 1, 2, 3, \dots$) of infinite-dimensional
algebras over $\fk$ as the sets of endomorphisms of the objects $N$ in a certain category of diagrams that depends on $\q$.
These algebras are called  affine  Temperley--Lieb and they are extended versions of the Temperley--Lieb quotients of the affine
Hecke algebras of type $\widehat{A}_{N-1}$. They have bases consisting of diagrams drawn without
intersections on the surface of a cylinder. A slightly different version of these algebras (the one without the translation generator) was  introduced much earlier by Martin and Saleur~\cite{MartinSaleur1} in the context of 2d statistical mechanics models such as loop models on a cylinder. 
One can cut the cylinder lengthwise and obtain then a much smaller algebra of diagrams on the strip which is the famous  (finite)    Temperley--Lieb algebra~\cite{TL}.

%\medskip

The categorical and statistical physics interpretations of the finite and affine  Temperley--Lieb (TL) algebras
% and their representations 
made possible many interesting connections between different parts of mathematics (representation theory, knot theory, low-dimensional topology) and quantum physics. 
Recent proposals for ``fault tolerant'' topological quantum computers for instance --  which  may be realized experimentally using fractional quantum Hall devices -- are based on models of non-abelian anyons, whose Hilbert space, interactions and computational properties (e.g. qbit gates) are all expressed in terms of certain representations of the   Temperley-Lieb algebra (see e.g. \cite{Nayak, Freedman}, and~\cite{Wang} for a recent review.)

From a different, more physical point of view, it is a well established experimental fact that the properties of  two-dimensional statistical mechanics lattice models at their critical point~\cite{Baxter} 
 (e.g., those based on representations of the TL algebras  with $\q\in\oC^\times$ and when $|\q|=1$) can be described using conformal field theories -- that is, quantum  field theories which are invariant under conformal transformations~\cite{Schott, DfMS}.  This means, for instance, that expectation values  of  products of local physical observables such as the spin or the energy density can be identified with the Green functions  of the conformal field theory -- the latter being calculated using abstract algebraic manipulations  
based on the Virasoro algebra representation theory~\cite{DfMS,Gaw}. 

It is important to understand that  the identification between properties of the lattice model and of the quantum field theory can only hold in the so called {\sl scaling limit} or {\sl the continuum limit}.  This 
requires in particular that all scales (such as the size $N$ of the system) be much larger than the lattice spacing limit. In terms of the finite or affine Temperley-Lieb algebras, this means therefore that one needs to study $N\to\infty$. 

Understanding mathematically  the continuum limit of the two-dimensional statistical mechanics lattice models at their critical point  remains a challenge. Many works in the mathematical literature have concentrated on the existence of the  limit, and investigations of conformal invariance and what it may mean for models defined initially on a discrete lattice. Significant  progress  was obtained in some cases (such as the Ising model or percolation) spurred in part by developments around the Schramm--Loewner Evolution (see e.g. \cite{LSW,Smirnov}).
For a recent attempt at constructing the conformal stress-energy tensor or the Virasoro algebra on the lattice, see \cite{CGS,HJVK}.

A different, algebraic  route was pioneered by physicists~\cite{KooSaleur}, who investigated the potential relations between the Lie brackets  of the Virasoro algebra generators  and of certain elements in the associative algebras -- such as the Temperley--Lieb  algebra -- underlying the lattice models. This approach saw considerable renewed interest in the last few years  as it shows great promise in helping to understand logarithmic conformal field theories. For recent reviews and references about these theories --  from both mathematics and physics point of view  see the special volume \cite{special}. 
For works (in mathematical physics) on  relationships between  the Temperley--Lieb lattice models and logarithmic conformal field theories in the context of the continuum limit, see \cite{PRZ, ReadSaleur07-1, ReadSaleur07-2, GV, GRS1, GJSV, PRV,  GST, MDRR, Belletete, GRS3}.

\medskip

From the perspective of the continuum limit, we have thus an interesting problem of understanding the ``asymptotic'' representation theory of the finite and affine TL algebras when $N$ tends to infinity. There are at least two very distinct parts of the problem: (i) to define appropriately a tower of the algebras and understand its inductive limit as a certain infinite-dimensional operator algebra and (ii) to understand representation-theoretic aspects of the limits, i.e. to define and study towers of the representation categories of the algebras. The first problem (i) seems to be very hard because it requires constructing
inductive systems of algebras (and their representations) where the homomorphisms respect also the  $\oC$-grading by the Hamiltonian of the lattice model, which is a particular element of the algebra (e.g. TL). Such systems 
are designed to control the growth of the algebras as the number $N$ of sites tends to infinity and to keep only those states and operators that are eigenstates of the Hamiltonian  with  {\sl finite}  eigenvalue in the limit\footnote{The idea here comes from physics: one should keep at $N\to\infty$ only relevant ``physical''  excitations -- finite-energy states -- above the ground states (that is, the Hamiltonian's eigenstates of  minimum eigenvalue). Note that the Hamiltonian in this discussion has to be properly normalized.}. By the definition, the inductive limit of such an inductive system gives what physicists call \textit{the continuum limit} of lattice models and we use this term as well.  
Unfortunately, our understanding of the spectrum and eigenstates of the Temperley--Lieb Hamiltonians is very limited, so only rather simple cases within this program were appropriately rigorously treated~\cite{GRS3} (see also~\cite{GST})
%(see also~\cite{Vershik}) 
and have indeed shown an explicit connection with the conformal field theory on the operator level. 

The main goal of our paper is to study the second point (ii) or the ``categorical'' part of the asymptotic representation theory of the finite and affine TL algebras. 
%The idea is to define a large category encapsulating all representations of (affine) TL algebras for all $N$, and fixed $\q$. Having the
Using certain idempotent subalgebras in the finite and affine cases, we construct infinite \textit{arc}-towers of the diagram algebras and
the corresponding direct system of representation categories, with terms labeled by $N\in\oN$.
%Having the tower of the TL algebras (with terms indexed by the number of sites $N$), 
In more details for the finite TL case, we introduce the ``big'' representation category $\bigoplus_N \TL{N}\!-\!\mathrm{mod}$  encapsulating all representations of the TL algebras for all $N$, and a fixed $\q\in\oC^{\times}$. We then  define  naturally the tensor product (bi-functor) in the category  using the induction:
it  is endowed with an \textit{associator} and \textit{braiding} isomorphisms that we prove do satisfy the coherence conditions (the pentagon and hexagon identities).  
%Braiding for this
%monoidal category is defined also naturally, via a conjugation by a braid-group's element.
%Eventually, we will obtain a braided monoidal $\mathbb{N}$-graded category.

Inside the  $\mathbb{N}$-graded category of finite TL representations,
we  construct the ``shift'' functors $\mathcal{F}_N: \TL{N}\!-\!\mathrm{mod}\longrightarrow \TL{N+2}\!-\!\mathrm{mod}$ associated with  the arc-towers of  the  TL algebras. These functors have a nice property: they map the standard modules to the standard ones of the same weight, projective covers to projective covers, etc.
The inductive or direct system of the TL-representation categories thus defined has also certain nice and important properties: the associators are mapped to associators and braiding isomorphism to braiding isomorphism. We then prove that all these structures endow the corresponding direct  limit  with a braided monoidal category structure, which we denote as $\catTLinf$. This construction can be considered as a machinery to prove that the TL fusion rules are stable with $N$ for any $\q\in\oC^\times$. When $\q$ is generic, the category $\catTLinf$ is semi-simple and have left and right duals. Interestingly, the construction is well defined even for $\q$ a root of unity, which is the most interesting case for the applications, in particular 
to low-dimensional topology and  logarithmic conformal field theory. In the root of unity case, we prove that the direct-limit categories we obtain are abelian and  highest-weight. They have  a {\sl non-simple} tensor unit and in particular not all objects have duals, so the categories are not rigid when $\q$ is a root of unity.

We also give a rather explicit description of objects in the category $\catTLinf$ (subquotient structure of standard and projective objects) and describe the fusion rules. It turns out that this data matches  what we know about certain category of the Virasoro algebra representations at critical central charges, 
 so we have in particular an equivalence of  abelian categories, described in Prop.~\ref{prop:equiv-1}.  We formulate (for the first time) an explicit conjecture (in Conj.~\ref{conj:equiv-2}) that the two categories are equivalent as braided monoidal categories and give few supporting arguments.

This result provides a mathematical framework to understand 
the observations in \cite{ReadSaleur07-2,GV}  that the representation theory of the Virasoro algebra relevant to  the description of the scaling limit of statistical models  and the representation theory of the Temperley-Lieb algebra occurring in the lattice formulation of these models ``are very similar'' (this observation has spurred many developments in the analysis of Virasoro and Temperley--Lieb-like algebras modules -- see e.g.~\cite{GJSV, RSA, BRSA}). 

\medskip
Note that, since we are dealing with the ordinary Temperley--Lieb algebra, the statistical models here have ``open boundary conditions'', which is known to correspond, in the scaling limit, to boundary conformal field theory, involving a single Virasoro algebra \cite{Cardybdr}.
Meanwhile, the physics of critical statistical lattice models away from their boundaries (the so called ``bulk'' case)  is described by two copies of the Virasoro algebra, corresponding to the chiral and anti-chiral dependencies of the correlation functions. The ideas in \cite{ReadSaleur07-1} can then be extended, and involve now, on the lattice side, models with periodic boundary conditions, for which the correct algebra is now  the corresponding affine Temperley--Lieb algebra.

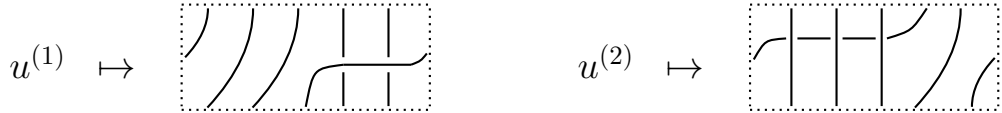
\begin{figure}
\begin{equation*}
 \begin{tikzpicture}
 \node[font=\large]  at (-1.5,-0.2) {\mbox{} $u^{(1)}\;\;\mapsto$ \mbox{}\qquad};
%\end{tikzpicture}
% \begin{tikzpicture}
 %%%%%%%%% Frame %%%%%%%%
 	\draw[thick, dotted] (-0.05,0.5) arc (0:10:0 and -7.5);
 	\draw[thick, dotted] (-0.05,0.55) -- (3.25,0.55);
 	\draw[thick, dotted] (3.25,0.5) arc (0:10:0 and -7.5);
	\draw[thick, dotted] (-0.05,-0.85) -- (3.25,-0.85);
%%%%%%%%%%%%	

	\draw[thick] (0.3,0.5) arc (0:10:20 and -3.75);
	\draw[thick] (0.9,0.5) arc (0:10:40 and -7.6);
	\draw[thick] (1.5,0.5) arc (0:10:40 and -7.6);
	
%	\draw[thick] (1.7,-0.18) .. controls (0.15,0.08)   .. (0.43,0.1);
%	\draw[thick] (0.57,0.1) -- (1.03,0.1);
%	\draw[thick] (1.17,0.1) -- (1.63,0.1);
%	\draw[thick] (1.77,0.1) .. controls (2.1,0.2)   .. (2.3,0.5);

%	\draw[thick] (1.6,-0.81) arc (0:10:-30 and 3.5);
	\draw[thick] (1.6,-0.81) .. controls (1.7,-0.3)   .. (2.1,-0.25);
		\draw[thick] (2.1,-0.25) -- (3,-0.25);
	\draw[thick] (2.1,0.5) -- (2.1,-0.15);
	\draw[thick] (2.1,-0.35) -- (2.1,-0.8);
	\draw[thick] (2.7,0.5) -- (2.7,-0.15);
	\draw[thick] (2.7,-0.35) -- (2.7,-0.8);
		\draw[thick] (3,-0.25) .. controls (3.13,-0.2)   .. (3.21,-0.1);
\end{tikzpicture}
\qquad\qquad
 \begin{tikzpicture}
 \node[font=\large]  at (-1.5,-0.2) {\mbox{} $u^{(2)}\;\;\mapsto$ \mbox{}\qquad};
 
 %%%%%%%%% Frame %%%%%%%%
 	\draw[thick, dotted] (-0.05,0.5) arc (0:10:0 and -7.5);
 	\draw[thick, dotted] (-0.05,0.55) -- (3.25,0.55);
 	\draw[thick, dotted] (3.25,0.5) arc (0:10:0 and -7.5);
	\draw[thick, dotted] (-0.05,-0.85) -- (3.25,-0.85);
%%%%%%%%%%%%	
	\draw[thick] (0.0,-0.18) .. controls (0.15,0.08)   .. (0.43,0.1);
	\draw[thick] (0.58,0.1) -- (1.02,0.1);
	\draw[thick] (1.18,0.1) -- (1.62,0.1);
	\draw[thick] (1.77,0.1) .. controls (2.1,0.2)   .. (2.3,0.5);

	\draw[thick] (2.9,-0.81) arc (0:10:-20 and 3.75);

	\draw[thick] (0.5,0.5) -- (0.5,-0.8);
	\draw[thick] (1.1,0.5) --  (1.1,-0.8);
	\draw[thick] (1.7,0.5) -- (1.7,-0.8);
	\draw[thick] (2.75,0.5) arc (0:10:40 and -7.6);

\end{tikzpicture}
\end{equation*}
\caption{The map of the two translation generators $u^{(1)}\in \ATL{N_1}$ and $u^{(2)}\in\ATL{N_2}$, with $N_1=3$ and $N_2=2$, into $\ATL{N_1+N_2}$ in terms of affine TL diagrams where each crossing/braiding at $i$-th site has to be replaced by a linear combination of identity and $e_i$.}
\label{intro:fig1}
\end{figure}

Our second result is concentrated on the limit of affine TL representation categories and on a new tensor-product or fusion in the enveloping category (where we call it \textit{the affine TL fusion}) and eventually in the direct-limit category.  The affine TL fusion is based on a non-trivial embedding of two affine or periodic TL algebras for $N_1$ and $N_2$ into the ``big'' algebra $\ATL{N_1+N_2}$ on $N_1+N_2$ sites.
 Note that there is an obvious embedding in the finite TL case while in the periodic or affine case it is not a priori clear how to embed the product of two periodic TL algebras into another periodic TL algebra. We introduce a novel diagrammatic way for this in Sec.~\ref{AffTLembed} and  describe here shortly only  the embedding of the two translation generators $u^{(1)}$ and $u^{(2)}$, as in Fig.~\ref{intro:fig1}. (We have thus constructed a novel tower of the affine TL algebras.) The affne TL fusion for the affine TL modules is then defined as the induction from such subalgebra $\ATL{N_1}\otimes\ATL{N_2}$ to  $\ATL{N_1+N_2}$.
We then use the machinery elaborated in the finite TL case (in Sec.~\ref{limTLcat}: note  it is actually a very general construction that can be applied to a relatively  general class of diagram algebras) and prove that the affine TL fusion rules are stable with $N$ and the tensor product is associative.

Like in  the finite TL case,  inside the  $\mathbb{N}$-graded category of affine TL representations,
we  construct the ``shift'' functors $\mathcal{F}_N: \ATL{N}\!-\!\mathrm{mod}\longrightarrow \ATL{N+2}\!-\!\mathrm{mod}$
corresponding to what we call  the arc-towers of the  affine TL algebras and the corresponding direct limit that is denoted by $\catATLinf$. We have proven that this limit has a tensor product with an associator but there is no braiding (we have computed some of the fusion rules and they are apparently non-commutative). However, we have shown that there is a semi-braiding connecting `chiral' and `anti-chiral' tensor products, the one defined above and the other defined similarly but interchanging over-crossings with under-crossings.

This periodic or  ``bulk'' case is much harder  than the ``open-boundary'' case, and less explored, despite some beautiful results on some simple cases \cite{GRS1,GRS2,GRS3,DPR,DSA,GRSV1}. A key ingredient that was missing so far was how to define for lattice models a fusion that may correspond, in the continuum limit, to the fusion of non chiral fields. The results of the present paper provide a potential way to do this, which will be explored more in our subsequent work \cite{GJS}.

\medskip

The paper is organized as follows. We first recall in section  \ref{FusTLcat} the definition and well-known facts about the finite Temperley--Lieb (TL) algebra, including the fusion of TL modules, then  introduce a tower of finite TL algebras that leads to the concept of an ``enveloping'' TL representation category endowed with $\oN$-graded bilinear tensor product  -- an object which, we  show later, is crucial for comparison with results from physics. We introduce then a novel  tower for the affine TL algebras in section \ref{AffTLembed} where we also introduce our diagrammatic fusion for affine TL modules. This fusion is further considered in section \ref{FusAffTL} where a connection to  the affine Hecke algebra is discussed. We start in section \ref{limTLcat} our discussion of limits of TL representation categories and introduce a braided monoidal structure on the direct limit of the TL categories, with the main result in Thm.~\ref{thm:catTLinf}. This section is rather technical and long but provides a general approach for studying the limits associated with towers of algebras. This approach is then used  in section \ref{semiGaffTL} in the affine TL case where our main result is formulated in Thm.~\ref{thm:catATLinf}. We conclude in section \ref{sec:outlook} by a discussion of the relation of our work with the Virasoro algebra representation theory, Prop.~\ref{prop:equiv-1}, and write down a fundamental conjecture~\ref{conj:equiv-2} about an equivalence of braided tensor categories.  In Appendix~A, we give several  examples of the diagrammatic calculation of affine TL fusion rules.

\medskip

\noindent{\bf Conventions:} 
Throughout the paper we fix the field $\fk=\oC$ for  convenience, though most of our results hold for any field $\fk$ (we need the characteristic zero only when we discuss the subquotient structure of the TL representations). 
We also denote by $\oN$ the additive semi-group of positive integers $\{1,2,3, \ldots\}$. 

\bigskip

\noindent
{\bf Acknowledgements:}
We are grateful to Jesper L. Jacobsen for fruitful discussions and general interest in this work. This work was supported in part by the ERC Advanced Grant NuQFT. 
The work of AMG was  also supported by a Humboldt fellowship,  DESY and CNRS. AMG  thanks IPHT/Saclay for its kind hospitality during last few years where a part of this work was done.
 AMG also thanks the organizers of the workshop \textit{Conformal Field Theories and Tensor categories}, International Center for Mathematical Research, Beijing, 2011 where some of our results (those on the direct limit of finite TL categories and the relation to the Virasoro algebra) were presented.

\section{Fusion in TL-mod categories }\label{FusTLcat}

%\subsection{Definitions}
\textit{The (finite) Temperley--Lieb (TL) algebra} $\TL{N}(m)$ is an associative algebra over $\oC$ generated by unit $\one$ and $e_j$, with $1\leq j\leq N-1$, satisfying  the defining relations
\begin{eqnarray}\label{TL-rel}
e_j^2&=&me_j,\nonumber\\
e_je_{j\pm 1}e_j&=&e_j,\\
e_je_k&=&e_ke_j\qquad(j\neq k,~k\pm 1).\nonumber
\end{eqnarray}
This algebra has a well-known faithful diagrammatical representation in terms of non-crossing pairings on a rectangle with $N$ points on each of the opposite sides. Multiplication is performed by placing two rectangles on top of each other, and replacing any closed loops by a factor $m$. While the identity corresponds to the diagram in which each point is directly connected to the point above it, the generator $e_i$ is represented by the diagram, see  Fig.~\ref{fig:TLe}, where the points $i$  on both sides of the rectangle are connected to the point $i+1$ on the same side, all other points being connected like in the identity diagram. The defining relations are easily checked by using isotopy ambient on the boundary of the rectangle, see Fig.~\ref{fig:TLrel}. 

\begin{figure}
\begin{equation*}
\begin{tikzpicture}
\node at (-9.,-0.2) {$e_i\quad =$};
%\draw[thick] (0,0.05) arc (-90:0:0.5 and 0.45);
\draw[thick] (-5,-0.8) -- (-5,0.5);
\draw[thick] (-7,-0.8) -- (-7,0.5);
\node at (-6,-0.2) {$\ldots$};
\node at (-7.,-1.2) {$1$};
\node at (-4.2,-1.2) {$i$};
\node at (-3.2,-1.2) {$i+1$};
	\draw[thick] (-4.2,0.5) arc (-180:0:0.5 and 0.45);
	%\draw[thick] (2.6,0.05) arc (-90:0:-0.5 and 0.45);

	%\draw[thick] (0.0,-0.15) arc (-180:0:1.3 and 0.0);

	%\draw[thick] (0.5,-0.8) arc (0:90:0.5 and 0.45);
	\draw[thick] (-3.2,-0.8) arc (0:180:0.5 and 0.45);
	%\draw[thick] (2.1,-0.8) arc (0:90:-0.5 and 0.45);

%	\draw[thick] (0.4,0) arc (-180:0:0.3 and 0.3);
%	\draw[thick] (0.6,0) arc (-180:0:0.1 and 0.18);
\node at (-2,-0.2) {$\ldots$};
\draw[thick] (0,-0.8) -- (0,0.5);
\node at (0.,-1.2) {$N$};
	\end{tikzpicture}
\end{equation*}
\caption{The diagrammatic representation of $e_i$.}
\label{fig:TLe}
\end{figure}
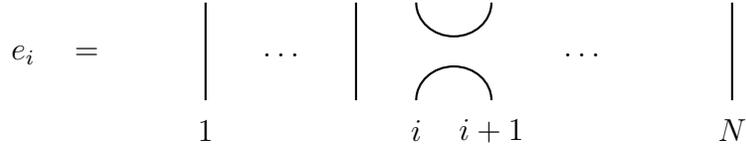
\begin{figure}
\begin{equation*}
\begin{tikzpicture}
	\draw[thick] (0.8,0.5) arc (-180:0:0.5 and 0.45);
	\draw[thick] (1.8,-0.8) arc (0:180:0.5 and 0.45);
	\draw[thick] (1.8,1.8) arc (-180:0:0.5 and 0.45);
	\draw[thick] (2.8,0.5) arc (0:180:0.5 and 0.45);
	
	%%%%%%%%%%
	\draw[thick] (0.8,0.5) -- (0.8,1.8);
%%%%%%%%%%%%%
\draw[thick] (0.8,3.1) arc (-180:0:0.5 and 0.45);
	\draw[thick] (1.8,1.8) arc (0:180:0.5 and 0.45);
	%%%%%%%%%%%%%%%
	\draw[thick] (2.8,-0.9) -- (2.8,0.5);
	\draw[thick] (2.8,1.8) -- (2.8,3.1);
	%%%%%%%%
	\node at (4.,1.2) {$ =$};
	%%%%%%%%%
	\draw[thick] (5.2,1.8) arc (-180:0:0.5 and 0.45);
	%\draw[thick] (2.6,0.05) arc (-90:0:-0.5 and 0.45);

	%\draw[thick] (0.0,-0.15) arc (-180:0:1.3 and 0.0);

	%\draw[thick] (0.5,-0.8) arc (0:90:0.5 and 0.45);
	\draw[thick] (6.2,0.5) arc (0:180:0.5 and 0.45);

		\end{tikzpicture}
\end{equation*}
\caption{The diagrammatic version of the relation $e_ie_{i+1}e_i=e_i$.
}
\label{fig:TLrel}
\end{figure}
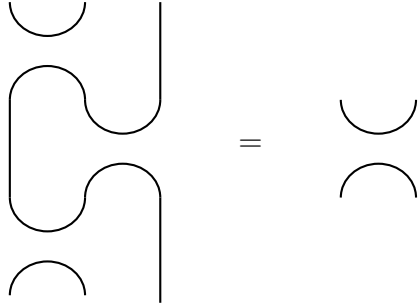

We will often omit mentioning the parameter $m$ and write simply $\TL{N}$ as the replacement for $\TL{N}(m)$.

\subsection{Towers of the TL algebras}\label{sec:TL-towers}
The important ingredient of our constructions below are towers of the TL algebras. In terms of the diagrams, we can naturally construct two kinds of towers.
\begin{itemize}
\item
 The first one is standard, it uses
 \textit{the standard embeddings} of the algebras:
\begin{equation}\label{st-emb-0}
\TL{N} \xrightarrow{\quad \stemb \quad} \TL{N+1}
\end{equation}
such that
\begin{equation}\label{st-emb}
\iota(e_j) = e_j,\qquad 1\leq j \leq N-1,
\end{equation}
or in terms of the TL diagrams one adds one vertical string on the right of the diagram considered as an element in $\TL{N}$ -- this gives an element in $\TL{N+1}$.
\item
The second tower uses what we call the arc-embeddings: a diagram on $N$ sites is enlarged up to the diagram on $N+2$ sites by adding arcs, see precise definition below in Sec.~\ref{sec:arc-tower}. 
\end{itemize}
The first type of TL towers 
is used in the definition of the TL fusion, while the second is used in constructing direct limits of the TL representation  categories.

\subsection{TL fusion}
In this section, we recall  \textit{fusion} for modules over the (finite) TL algebras initially introduced in the physics literature~\cite{ReadSaleur07-1} and further studied on more formal grounds in~\cite{GV}, which we follow in terms of conventions and notations. The fusion's definition is based on the standard embeddings~\eqref{st-emb}. 
We then  use this fusion construction to define $\oN$-graded tensor-product structure (with an associator) on the direct sum of the categories of TL representations.
% the limit $\catTLinf$ TL category.

Let $\catTL_N$ denotes the category of finite-dimensional $\TL{N}$-modules (we will usually drop the  parameter $m$ for brevity.) 

\begin{dfn}[\cite{ReadSaleur07-1,GV}]\label{defRSfusion}
Let $M_1$ and $M_2$ be two modules over $\TL{N_1}$ and $\TL{N_2}$ respectively.
Then, the tensor product $M_1\tensor M_2$ is a module over
 the product $\TL{N_1}\tensor\TL{N_2}$ of the two algebras.
Using the standard  embedding, we consider this
 product of algebras as a subalgebra in $\TL{N}$, for $N=N_1+N_2$.
\textit{The fusion (bi-)functor}
\begin{equation}\label{fusfunc-TL-def-0}
\fus: \quad \catTL_{N_1}\times \catTL_{N_2}\to \catTL_{N_1+N_2}
\end{equation}
 on two modules $M_1$ and $M_2$  is then defined as the  module induced from this subalgebra, {\it i.e.,}
\begin{equation}\label{fusfunc-TL-def}
M_1\fus M_2 = \TL{N}\tensor_{\bigl(\TL{N_1}\tensor\TL{N_2}\bigr)} M_1\tensor M_2 \ ,
\end{equation}
where we used the balanced tensor product over $\TL{N_1}\tensor\TL{N_2}$.
\end{dfn}

An explicit calculation of the TL fusion for a large class of indecomposable representations at any non-zero $\q$ (i.e. including the root of unity cases) is given in~\cite{GV}, 
see also recent works~\cite{RSA,Belletete,BRSA}.
 
 \medskip 

We emphasize that the TL fusion is associative, i.e. the functor $\fus$ in~\eqref{fusfunc-TL-def-0} is equipped with a family of natural  isomorphisms 
%\begin{equation}\label{fusfunc-ass}
$\bigl(M_1\fus M_2\bigr) \fus M_3 \cong
M_1\fus \bigl( M_2 \fus M_3\bigr)$, for each triple of $\TL{N_i}$ modules $M_i$, $i=1,2,3$, and each triple of the natural numbers  $(N_1,N_2,N_3)$.
We leave the proof of the associativity till Sec.~\ref{sec:ass-TL} and give the associator explicitly in Prop.~\ref{prop:ass}
and prove the pentagon identity in Prop.~\ref{prop:ass-pentagon}.
% The proof of associativity repeats the proof of the analogous statement in Prop.~\ref{prop:ass} in the affine TL case.
% the associator is defined as follows.
On top of it, we also have the braiding, i.e. the tensor product $\fus$ is commutative, see Sec.~\ref{sec:braiding-TL} below.

It is then natural to introduce a large category embracing these structures. We call it the ``enveloping'' TL representation category (that formally contains TL representations at any~$N$)
\begin{equation}
\catTL = \bigoplus_{N\geq 1} \catTL_{N}\ .
\end{equation}
The direct sum here means that $\catTL$ contains $\catTL_{N}$ as a full subcategory and there are no morphisms between the full subcategories for different $N$. The category $\catTL$ is thus graded by $\oN$. We will label  an object $M$ from $\catTL_N$ as $M[N]$ to emphasize its grade. We can thus consider $\fus$ defined in~\eqref{fusfunc-TL-def} as an $\oN$-graded  tensor-product functor on $\catTL$:

\begin{prop}\label{prop:N-graded}
Let $\fus$  denote the $\oN$-graded bilinear tensor product on $\catTL$ as defined for each pair $(N_1,N_2)\in\oN\times\oN$ in~\eqref{fusfunc-TL-def-0} and~\eqref{fusfunc-TL-def}. It is equipped with an associator satisfying the pentagon identity. 
\end{prop}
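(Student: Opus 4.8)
The plan is to establish Proposition~\ref{prop:N-graded} by verifying that the induction construction $\fus$ is functorial and bilinear on each graded piece, and then by exhibiting an explicit associator built from the canonical isomorphisms of iterated induction. First I would check that $\fus$ is well-defined as a bifunctor: given morphisms $f_i\colon M_i\to M_i'$ of $\TL{N_i}$-modules, the map $f_1\tensor f_2$ is a morphism of $\TL{N_1}\tensor\TL{N_2}$-modules, hence induces a morphism $\id_{\TL{N}}\tensor(f_1\tensor f_2)$ after applying $\TL{N}\tensor_{\TL{N_1}\tensor\TL{N_2}}(-)$; compatibility with composition and identities is immediate from the functoriality of induction. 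Bilinearity (additivity in each argument, and the fact that $M\fus(-)$ and $(-)\fus M$ are $\oC$-linear on Hom-spaces) follows because $\tensor_{\oC}$ is bilinear and $\TL{N}$ is free as a right $\TL{N_1}\tensor\TL{N_2}$-module, so the balanced tensor product is exact and additive in each slot. This also shows $\fus$ lands in $\catTL_{N_1+N_2}$ (finite-dimensionality: $\TL{N}$ is finite-dimensional, so the induced module of a finite-dimensional module is finite-dimensional), so the $\oN$-grading is respected in the sense $M_1[N_1]\fus M_2[N_2]$ sits in grade $N_1+N_2$.

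Next I would construct the associator. For a triple $(N_1,N_2,N_3)$ with $N=N_1+N_2+N_3$, consider the chain of subalgebra inclusions
\[
\TL{N_1}\tensor\TL{N_2}\tensor\TL{N_3}\;\hookrightarrow\;\TL{N_1+N_2}\tensor\TL{N_3}\;\hookrightarrow\;\TL{N},
\]
and the parallel chain obtained by bracketing the last two factors first. The key algebraic fact is transitivity of induction: for a tower of algebras $A\subseteq B\subseteq C$ and a left $A$-module $X$, there is a natural isomorphism $C\tensor_B(B\tensor_A X)\cong C\tensor_A X$. Applying this with $X=M_1\tensor M_2\tensor M_3$ along each of the two chains yields
\[
\bigl(M_1\fus M_2\bigr)\fus M_3\;\cong\;\TL{N}\tensor_{\TL{N_1}\tensor\TL{N_2}\tensor\TL{N_3}}\bigl(M_1\tensor M_2\tensor M_3\bigr)\;\cong\;M_1\fus\bigl(M_2\fus M_3\bigr),
\]
and the composite of these canonical isomorphisms is the desired associator $\alpha_{M_1,M_2,M_3}$. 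Naturality in each $M_i$ is inherited from the naturality of the transitivity-of-induction isomorphism. One technical point to check here is that the balanced tensor product $M_1\fus M_2 = \TL{N_1+N_2}\tensor_{\TL{N_1}\tensor\TL{N_2}}(M_1\tensor M_2)$, viewed as a $\TL{N_1+N_2}$-module, when further restricted to $\TL{N_1+N_2}\tensor\TL{N_3}$ and induced up to $\TL{N}$, genuinely agrees with the direct induction from the triple product — this is exactly transitivity applied to the chain above, but one must confirm the module structures on the nose, using that $\TL{N}$ is free (hence flat) over each intermediate subalgebra so no $\mathrm{Tor}$ corrections appear.

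For the pentagon identity, I would note that both sides of the pentagon, applied to a quadruple $M_1,M_2,M_3,M_4$, are composites of transitivity isomorphisms that ultimately identify all bracketings with the single ``totally induced'' module
\[
\TL{N}\tensor_{\TL{N_1}\tensor\TL{N_2}\tensor\TL{N_3}\tensor\TL{N_4}}\bigl(M_1\tensor M_2\tensor M_3\tensor M_4\bigr),\qquad N=N_1+N_2+N_3+N_4.
\]
The coherence of the transitivity isomorphisms — i.e.\ that any two chains of inclusions between the same pair of subalgebras induce the same canonical identification — then forces the pentagon to commute. In the body of the paper this is deferred to Section~\ref{sec:ass-TL}, where the associator is given explicitly in Prop.~\ref{prop:ass} and the pentagon proved in Prop.~\ref{prop:ass-pentagon}; so for the present Proposition it suffices to cite those. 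The main obstacle I anticipate is not the abstract category theory but the bookkeeping of \emph{which} embedding of $\TL{N_1}\tensor\TL{N_2}$ into $\TL{N_1+N_2}$ is used and checking that the three-fold embeddings arising from the two bracketings of the pentagon literally coincide as subalgebras of $\TL{N}$ (they do, since the standard embedding just juxtaposes vertical strings, which is strictly associative on the level of diagrams), together with verifying freeness/flatness of $\TL{N}$ over these diagram subalgebras so that induction is exact and transitivity holds without derived corrections.
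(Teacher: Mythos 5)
Your proposal is correct and follows essentially the same route as the paper: Prop.~\ref{prop:ass} builds the associator as the composite $\riso^{-1}\circ\liso$ of two transitivity-of-induction isomorphisms onto the totally induced module $\TL{N}\otimes_{\TL{N_1}\otimes\TL{N_2}\otimes\TL{N_3}}(M_1\otimes M_2\otimes M_3)$, and Prop.~\ref{prop:ass-pentagon} checks the pentagon by showing both composites send a representative to the same element of that module. The only superfluous point is your appeal to freeness/flatness of $\TL{N}$ over the diagram subalgebras: the isomorphism $C\otimes_B(B\otimes_A X)\cong C\otimes_A X$ is a purely formal consequence of associativity of balanced tensor products and needs no Tor-vanishing, which is also how the paper argues.
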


Note that we do not have the tensor unit in~$\catTL$, as we do not have the zero grade.
Let us call such a category (a monoidal category weaken by removing the tensor unit axioms) as a \textit{semi-group category}.

\medskip

For the later use, we will need the structure of  a highest-weight category on $\catTL_{N}$ and we recall below few standard facts from the representation theory of $\TL{N}(\q+\q^{-1})$.

\subsection{Standard and projective TL modules}\label{sec:CN}
We also recall~\cite{TL} \textit{the standard} $\TL{N}(m)$ modules $\StTL{j}[N]$ of  weight $x \leq j\leq N/2$, where $x =\half (N\,\mathrm{mod}\, 2)$. First, we need to introduce ``half-diagrams'' (usually called link states) obtained from Temperley-Lieb diagrams (i.e.,  non-crossing pairings on a rectangle with $N$ points on each of the opposite sides) by cutting these diagrams horizontally in the middle. Each half has $N$ points: some of them are connected by arcs, and some others are not connected to anything. The latter are often called \textit{through-lines} (or defects). The algebra acts in the obvious diagrammatic way by concatenating Temperley-Lieb diagrams with link states, eliminating  all  loops at the  price of multiplying the diagram by $m^{n}$, where $n$ is the number of loops, and keeping track of the connectivities using isotopy. It is clear that the number of through-lines cannot increase under the action of the algebra. Standard modules $\StTL{j}[N]$ are obtained by letting the algebra act as usual  when the number of through-lines  -- denoted by $2j$ -- is conserved, and setting this action to zero when the number of through-lines decreases. It is well known that these modules are irreducible for $\q$ generic~\cite{TL}, while their dimension is given by differences of binomial coefficients 
\begin{equation}\label{dj}
d_j[N]=\left(\begin{array}{c} N\\{N\over 2}+j\end{array}\right)-\left(\begin{array}{c} N\\{N\over 2}+j+1\end{array}\right).
\end{equation}

It is well-known also~\cite{Westbury} that the category of finite-dimensional $\TL{N}$ modules is a highest-weight category, i.e., it has a special class of objects -- the standard modules $\StTL{j}[N]$ -- with morphisms only in one direction (there is a homomorphism from $\StTL{k}[N]$ to $\StTL{j}[N]$ only if $k\geq j$) and the projective and invective modules have a nice filtration with sections given by the standard and costandards modules, respectively,
% in the opposite direction, 
see more details in~\cite{DlRin} 
. When $\q$ is generic (not a root of unity) then the category is  semi-simple and the structure of a highest-weight category is trivial. 

Let $\q=e^{\rmi\pi/p}$ for integer $p\geq 2$ and set 
$s\equiv s(j)=(2j+1)\;\mathrm{mod}\; p$, we then recall the subquotient structure of $\StTL{j}[N]$,  
which was studied in many works from different perspectives~\cite{RSA, GV,Westbury,M1}  (though we use slightly different conventions here). 
%The nontrivial homomorphisms between the modules are described 
The standard modules with $s(j)=0$ are simple. For non-zero $s(j)$
there is a non-trivial homomorphism  from $\StTL{k}[N]$ to $\StTL{j}[N]$ only if $k=j$ or $k=j+p-s$, and in the second case:
$$
\phi_j:\quad \StTL{j+p-s}[N]\; \to \; \StTL{j}[N]
$$
with $\ker \phi_j$ given by the socle of $\StTL{j+p-s}[N]$ and $\mathrm{im} \, \phi_j$ is the socle of $\StTL{j}[N]$ and is isomorphic to the head of $\StTL{j+p-s}[N]$.
Note that $\phi_j$ exists only if $2(j+p-s)\leq N$. We thus have the subquotient diagram (for non-zero $s$) 
$$
\StTL{j}[N] \; = \; \IrTL{j}[N] \longrightarrow \IrTL{j+p-s}[N]
$$
where we introduce the notation $\IrTL{j}[N]$ for the irreducible quotient of $\StTL{j}[N]$ and set here $\IrTL{k}[N]\equiv0$ if $2k>N$.

% Having this subquotient structure and that 
Let $\PrTL{j}[N]$ denotes the projective cover  of the simple module $\IrTL{j}[N]$.
The subquotient structure of the projective covers can be easily deduced due to a reciprocity relation
%~\cite{D93} for any quasi-hereditary algebra.
for a highest-weight category. Let $[\StTL{j}:\IrTL{j'}]$ and  $[\PrTL{j}:\StTL{j'}]$ denote the number of appearances of $\IrTL{j'}[N]$ in the subquotient diagram for $\StTL{j}[N]$ and the number of appearances of $\StTL{j'}[N]$ in the standard filtration for the projective cover $\PrTL{j}[N]$, respectively. Then, the reciprocity relation reads
\begin{equation}
[\PrTL{j}:\StTL{j'}] = [\StTL{j'}:\IrTL{j}],
\end{equation}
or, in words, the projective
modules $\PrTL{j}$ 
are composed of the standard modules that have the irreducible module
$\IrTL{j}$ as a subquotient.
The projective covers $\PrTL{j}[N]$ are then simple if $s(j)=0$, they are equal to $\StTL{j}[N]$ for $0\leq j\leq \half(p-2)$ and otherwise have the following structure
\begin{align}\label{prTL-pic}
   \xymatrix@C=5pt@R=25pt@M=2pt{%
    &&\\
    &\PrTL{j}[N]\quad = &\\
    &&
 }
&  \xymatrix@C=20pt@R=20pt{%
    &&{\IrTL{j}[N]}\ar[dl]\ar[dr]&\\
    &\IrTL{j-s}[N]\ar[dr]&&\IrTL{j+p-s}[N]\ar[dl]\\
    &&\IrTL{j}[N]&
 } 
\end{align}
where the nodes are irreducible subquotients and the arrows correspond to $\TL{N}$ action, i.e., $\IrTL{j}[N]$ in the bottom of the diagram is the irreducible submodule (or socle), while the socle of the quotient $\PrTL{j}[N]/\IrTL{j}[N]$ is the direct sum $\IrTL{j-s}[N]\oplus\IrTL{j+p-s}[N]$ in the middle of the diagram, etc. Also note that the diagram~\eqref{prTL-pic} has three nodes instead of four if $2(j+p-s)>N$.

\section{Affine Temperley--Lieb embedding}\label{AffTLembed}

We have seen in the previous section  embeddings
of the finite Temperley--Lieb algebras  $\TL{N_1}\to\TL{N}$, for $N_1<N$, that are naturally defined in terms of TL diagrams by adding the vertical strings,
or by the use of the standard embeddings $\TL{N}\to\TL{N+1}$ repeatedly. This standard embedding is the basic step in the definition of the TL fusion.
Constructing embeddings of periodic or affine TL algebras is a non-trivial problem that we solve in this section. We first recall the  definition of the affine TL algebras (also parametrized by $N\in\oN$) and then propose a novel diagrammatical way of 
defining a tower of these algebras.

\subsection{The affine Temperley--Lieb algebras}
\label{sec:TL-alg-def}

We recall here two equivalent definitions of the affine Temperley--Lieb algebra - independently introduced  and studied in many works~\cite{MartinSaleur,Jones,MartinSaleur1,GL,Green}.
We follow mainly conventions and notations from the work of Graham and Lehrer \cite{GL,GL1}
 whenever
possible.

\subsubsection{Definition I: generators and relations}
\textit{The affine Temperley--Lieb (aTL) algebra} $\ATL{N}(m)$ is an associative algebra over $\oC$ generated by $u$, $u^{-1}$ , and $e_j$, with $j\in \oZ/N\oZ$, satisfying  the defining relations
\begin{eqnarray}
e_j^2&=&me_j,\nonumber\\
e_je_{j\pm 1}e_j&=&e_j,\label{TL}\\
e_je_k&=&e_ke_j\qquad(j\neq k,~k\pm 1),\nonumber
\end{eqnarray}
which are the standard TL relations  but defined now  for  indices modulo $N$, and
\begin{eqnarray}
ue_ju^{-1}&=&e_{j+1},\nonumber\\
u^2e_{N-1}&=&e_1\ldots e_{N-1},\label{TLpdef-u2}
\end{eqnarray}
where  the indices $j=1, \ldots, N$ are  again interpreted modulo $N$.

\subsubsection{Definition II: diagrammatic}

 \textit{The affine Temperley--Lieb (TL) algebra} $\ATL{N}(m)$ is an associative algebra over $\oC$ spanned by particular diagrams
on an annulus
%A general basis element in the algebra of
%diagrams corresponds to a diagram of
with  $N$ sites on the inner and $N$ on
the outer boundary.
% of the annulus (we will restrict in what follows to the case $N$ even, and parametrize $N=2L$.)
 The sites are connected in pairs,
and only configurations that can be represented using lines inside the
annulus that do not cross are allowed. Diagrams related by an isotopy
leaving the labeled sites fixed are considered equivalent. We call such
(equivalence classes of) diagrams  \textit{affine} diagrams.   Examples of affine diagrams are shown in Fig.~\ref{fig:aff-diag} for $N=4$, where we draw them in a slightly different geometry: 
we cut the annulus and transform it to a rectangle which we call \textit{framing} so that the sites labeled by `$1$' are closest to the left and sites labeled by `$N$' are to the right sides of the rectangle. 
%we cut the annulus and transform it to a rectangle, which we call \textit{framing}, with the sites labeled from left to right. 
%An important parameter is the number of
% through-lines, which we denote by $2j$, with $j=0,1,\ldots, L$, connecting
% $2j$ sites on the inner and $2j$ sites on the outer boundary of the
% annulus; the $2j$ sites on the inner boundary we call free or
% non-contractible.  
 Multiplication $a\cdot b$ of two affine diagrams $a$ and $b$ is defined in a natural
way, by joining an inner boundary of $a$ to an outer boundary of the annulus of $b$, and
removing the interior sites.
Whenever a closed contractible loop is
produced when diagrams are multiplied together, this loop must be
replaced by a numerical factor~$m$ that we often parametrize by $\q$ as $m=\q+\q^{-1}$. 

\begin{figure}
%	\usetikzlibrary{lindenmayersystems}
\begin{equation*}
 \begin{tikzpicture}
 %%%%%%%%% Frame %%%%%%%%
 	\draw[thick, dotted] (-0.05,0.5) arc (0:10:0 and -7.5);
 	\draw[thick, dotted] (-0.05,0.55) -- (2.65,0.55);
 	\draw[thick, dotted] (2.65,0.5) arc (0:10:0 and -7.5);
	\draw[thick, dotted] (-0.05,-0.85) -- (2.65,-0.85);
%%%%%%%%%%%%	
	\draw[thick] (0.3,0.5) arc (0:10:20 and -3.75);
	\draw[thick] (2.3,-0.81) arc (0:10:-20 and 3.75);
%	\draw[thick] (2.6,-0.25) arc (0:10:20 and -3.75);

	\draw[thick] (0.9,0.5) arc (0:10:40 and -7.6);
	\draw[thick] (1.55,0.5) arc (0:10:40 and -7.6);
	\draw[thick] (2.2,0.5) arc (0:10:40 and -7.6);

	\end{tikzpicture}\;\;,
	\qquad
 \begin{tikzpicture}
 %%%%%%%%% Frame %%%%%%%%
 	\draw[thick, dotted] (-0.05,0.5) arc (0:10:0 and -7.5);
 	\draw[thick, dotted] (-0.05,0.55) -- (2.65,0.55);
 	\draw[thick, dotted] (2.65,0.5) arc (0:10:0 and -7.5);
	\draw[thick, dotted] (-0.05,-0.85) -- (2.65,-0.85);
%%%%%%%%%%%%	
	\draw[thick] (0,0) arc (-90:0:0.5 and 0.5);
%	\draw[thick] (0.8,0.5) line (0:0.5:0.5 and 0);
	\draw[thick] (0.9,0.5) arc (0:10:0 and -7.6);
	\draw[thick] (1.65,0.5) arc (0:10:0 and -7.6);
	\draw[thick] (2.6,0) arc (-90:0:-0.5 and 0.5);

	\draw[thick] (0.5,-0.8) arc (0:90:0.5 and 0.5);
%	\draw[thick] (1.8,-0.8) arc (0:180:0.5 and 0.5);
	\draw[thick] (2.1,-0.8) arc (0:90:-0.5 and 0.5);
	\end{tikzpicture}\;\;,
	\qquad
 \begin{tikzpicture}
  %%%%%%%%% Frame %%%%%%%%
 	\draw[thick, dotted] (-0.05,0.5) arc (0:10:0 and -7.5);
 	\draw[thick, dotted] (-0.05,0.55) -- (2.65,0.55);
 	\draw[thick, dotted] (2.65,0.5) arc (0:10:0 and -7.5);
	\draw[thick, dotted] (-0.05,-0.85) -- (2.65,-0.85);
%%%%%%%%%%%%	
	\draw[thick] (0,0.1) arc (-90:0:0.5 and 0.4);
	\draw[thick] (0,-0.1) arc (-90:0:0.9 and 0.6);
	\draw[thick] (2.6,-0.1) arc (-90:0:-0.9 and 0.6);
	\draw[thick] (2.6,0.1) arc (-90:0:-0.5 and 0.4);
	
	\draw[thick] (0.5,-0.8) arc (0:90:0.5 and 0.5);
	\draw[thick] (1.8,-0.8) arc (0:180:0.5 and 0.5);
	\draw[thick] (2.1,-0.8) arc (0:90:-0.5 and 0.5);
	\end{tikzpicture}\;\;,
	\qquad
 \begin{tikzpicture}
 %%%%%%%%% Frame %%%%%%%%
 	\draw[thick, dotted] (-0.05,0.5) arc (0:10:0 and -7.5);
 	\draw[thick, dotted] (-0.05,0.55) -- (2.65,0.55);
 	\draw[thick, dotted] (2.65,0.5) arc (0:10:0 and -7.5);
	\draw[thick, dotted] (-0.05,-0.85) -- (2.65,-0.85);
%%%%%%%%%%%%	
	\draw[thick] (0,0.05) arc (-90:0:0.5 and 0.45);
	\draw[thick] (0.8,0.5) arc (-180:0:0.5 and 0.45);
	\draw[thick] (2.6,0.05) arc (-90:0:-0.5 and 0.45);

	\draw[thick] (0.0,-0.15) arc (-180:0:1.3 and 0.0);

	\draw[thick] (0.5,-0.8) arc (0:90:0.5 and 0.45);
	\draw[thick] (1.8,-0.8) arc (0:180:0.5 and 0.45);
	\draw[thick] (2.1,-0.8) arc (0:90:-0.5 and 0.45);

%	\draw[thick] (0.4,0) arc (-180:0:0.3 and 0.3);
%	\draw[thick] (0.6,0) arc (-180:0:0.1 and 0.18);
	\end{tikzpicture}
\end{equation*}
%\begin{equation}
%\psset{xunit=2mm,yunit=2mm}
%\begin{pspicture}(0,0)(0.75,1)
%\psellipticarc[linecolor=black,linewidth=1.0pt]{-}(0,1.0)(0.5,1.42){180}{360}
%\end{pspicture}
%\end{equation}
\caption{Examples of affine diagrams for $N=4$, with the left and right sides of the framing rectangle identified. The first diagram represents the translation generator $u$ while the second diagram is for the generator $e_4\in\ATL{4}(m)$. The third and fourth ones are examples of $j=0$ diagrams.
%it has rank $2$ as well as the second one. The third diagram has  rank $3$.
}
\label{fig:aff-diag}
\end{figure}
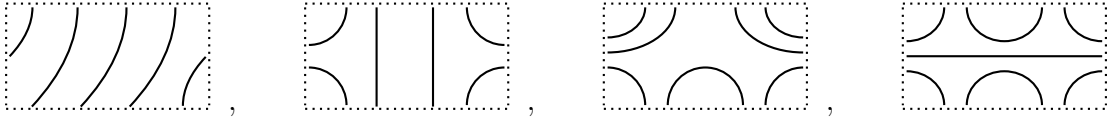

\medskip

We also note that the  diagrams
in this algebra allow winding of through-lines around the annulus any
integer number of times, and different windings result in independent
algebra elements. Moreover, in the ideal of zero through-lines, any number of
non-contractible loops (like in the fourth diagram in Fig.~\ref{fig:aff-diag}) is allowed. The algebra $\ATL{N}(m)$ is thus infinite-dimensional. For $N=1$, it is just the polynomial algebra $\oC[u,u^{-1}]$.

%In terms of generators and relations,
%the algebra $\ATL{N}(m)$ is generated by the $e_i$'s together with the identity, subject to the usual relations~\cite{MartinSaleur,Jones,Green}
%%
%\begin{eqnarray}
%e_j^2&=&me_j,\nonumber\\
%e_je_{j\pm 1}e_j&=&e_j,\nonumber\\
%e_je_k&=&e_ke_j\qquad(j\neq k,~k\pm 1),\label{TL}
%\end{eqnarray}
%%
%where $j=1,\ldots,N$ and the indices are interpreted modulo $N$,
%and
% %% Once in the sector with $N$ through-lines of the diagram algebra $\ATL{N}$, it is natural to consider the
% by generators $u$ and $u^{-1}$ of translations by one site to the right
%and to the left, respectively.  The following additional defining
%relations are then obeyed,
%%
%\begin{eqnarray}
%ue_ju^{-1}&=&e_{j+1},\nonumber\\
%u^2e_{N-1}&=&e_1\ldots e_{N-1},\nonumber
%%u^{N}&=&1\label{PTL}
%\end{eqnarray}
%%
%and $u^{\pm N}$ is a central element.
% The algebra generated by the $e_i$ and $u^{\pm1}$ together with these  relations is usually called the \textit{affine} Temperley--Lieb algebra $\ATL{N}(m)$.
%

\newcommand{\aBr}{\widehat{\mathbb{B}}}

\subsection{The affine TL and the affine braid group}

Let $\oC\mathbb{B}_{N}$ be  the group algebra of the Artin's braid group. As an associative algebra, it is generated by $\g_i^{\pm1}$, with $1\leq i\leq N-1$, subject to $g_i g_j = g_j g_i$ for $|i-j|>1$ and to the standard braid relations:
\begin{equation}\label{br-rel}
\displaystyle g_i g_{i\pm 1} g_i = g_{i\pm 1} g_i g_{i\pm 1}
\end{equation}
or with the graphical notation
\begin{center}
%\begin {tikzpicture}
%\braid [rotate=90, style strands={1}{red} , style strands={2}{blue} , style strands={3}{green}] 
%{s_1s_2^{?1} s_1 s_2^{?1} s_1 s_2^{?1}};
%\begin {tikzpicture}
\begin{tikzpicture}
\node at (0,-0.5) {$g_i\quad =$};
\braid[braid colour=black,strands=2,braid start={(0.2,0)}]% 
{ \g_1 }
\node at (4.5,-0.5) {$g_i^{-1}\quad =$};
\braid[strands=2,braid start={(5,0)}] 
{\g_1^{-1}}
\end{tikzpicture}
\end{center}
the relations~\eqref{br-rel} can be graphically depicted as
%$$
%\begin{pspicture}(0,0)(2,1)
%\psset{xunit=6mm,yunit=6mm}
%%\psgrid[subgriddiv=1,griddots=10,gridlabels=10pt](0,0)(3,5)
%
%\psline[linecolor=black](0,0)(0.4,0.4)
%\psline[linecolor=black](0.6,0.6)(1,1)
%\psline[linecolor=black](1,0)(0,1)
%
%
% \rput[Bc](-1,0.5){$g_i=$}
%
%\end{pspicture}
%\hspace{1cm}
%\begin{pspicture}(0,0)(2,1)
%\psset{xunit=6mm,yunit=6mm}
%%\psgrid[subgriddiv=1,griddots=10,gridlabels=10pt](0,0)(3,5)
%
%\psline[linecolor=black](0,0)(1,1)
%\psline[linecolor=black](1,0)(0.6,0.4)
%\psline[linecolor=black](0.4,0.6)(0,1)
%
% \rput[Bc](-1,0.5){$g^{-1}_i=$}
%
%\end{pspicture}
%$$
\begin{equation}\label{eq:br-rel-diag}
\begin{tikzpicture}
\braid[braid colour=black,strands=3,braid width=24,braid start={(0,0)}]%
{\g_1 \g_2 \g_1}
\node[font=\large] at (4.5,-1.5) {\(=\)};
\braid[strands=3,braid width=24,braid start={(5,0)}]
{\g_2 \g_1 \g_2}
\end{tikzpicture}
\end{equation}

 It is well-known that the finite TL algebra $\TL{N}(\q+\q^{-1})$ is a finite-dimensional quotient of $\oC\mathbb{B}_{N}$ where
 we set
\begin{equation}\label{hecke-gen}
\displaystyle g_i^{\pm 1 } = \pm\rmi (\q^{\pm\half}\one - \q^{\mp \half } e_i)
\end{equation}
and imply the TL relations~\eqref{TL-rel}.

 Let now $\aBr_N$ be the affine braid group -- the group of braids on the surface of a cylinder -- it is generated by the translation $u$ (like above) and $g_i^{\pm1}$, with $0\leq i\leq N-1$, subject to  $u \g^{\pm1}_i u^{-1} = \g^{\pm1}_{i+1}$ and to the braid relations~\eqref{br-rel} where the index $i$ is now interpreted modulo~$N$. 
We recall~\cite{GL2} that the affine TL algebra $\ATL{N}(\q+\q^{-1})$ can be  defined as a quotient of $\oC\aBr_N$ where we again set~\eqref{hecke-gen} and imply the relations~\eqref{TL} and~\eqref{TLpdef-u2}.

Using this connection of $\ATL{N}$ with the braid groups, we will sometimes use below  the braid generators as a replacement for the TL generators. In the diagrams like~\eqref{eq:br-rel-diag}
we emphasize that each under/above crossing of lines should be interpreted as the replacement for the linear combination in~\eqref{hecke-gen}.

\subsection{A tower of affine TL algebras}
\label{sec:aTL-emb}

Our approach to the fusion of affine  TL modules in the next section relies on the induction functor which associates with any pair of  modules over the algebras $\ATL{N_1}(m)$ and $\ATL{N_2}(m)$ a module over the bigger algebra $\ATL{N_1+N_2}(m)$. This functor uses an explicit embedding of the two ``small'' algebras on $N_1$ and $N_2$ sites into the ``big'' one on $N_1+N_2$ sites. 

We start with defining a ``one-step'' embedding  $\ATL{N}(m)\to\ATL{N+1}(m)$:
\begin{align}
u^{(1)}&\,\mapsto \, u \, \g^{-1}_{N}\ , \nonumber\\
e^{(1)}_i &\,\mapsto\, e_i,\qquad 1\leq i \leq N-1\ , \label{one-step-map}\\
e^{(1)}_0 & \, \mapsto\,  \g_{N} \, e_0 \, \g^{-1}_{N}\ ,\nonumber
\end{align}
where we label the generators in $\ATL{N}$ with the superscript $(1)$ and $g_N$ stands for the combination in~\eqref{hecke-gen}. It is straightforward to check that this map is an algebra map. The kernel of this map is trivial: 
we have a basis in the image given by placing an extra ``vertical'' string between the $N$th and $1$st sites of the cylindrical or affine diagram for a basis element in $\ATL{N}$, while each crossing is replaced by the corresponding under-crossing: this gives obviously a bijection between the two bases (explicit diagrams will be given below).

We can use the map~\eqref{one-step-map} recursively and define the embedding $\ATL{N}(m)\to\ATL{N+k}(m)$. Similarly, we can embed the product of two affine TL algebras, $\ATL{N_1}$ and $\ATL{N_2}$, into $\ATL{N}$  with $N=N_1+N_2$.
Let us denote the generators in the $i$th algebra as $u^{(i)}$ and $e^{(i)}_j$, with $i=1,2$, and use standard notations for the generators in the ``big'' algebra $\ATL{N}$. We first define the map on the TL generators $e^{(i)}_j$, where $j\ne0$, in the standard way
\begin{equation}
e^{(1)}_j\mapsto e_j,\qquad e^{(2)}_k\mapsto e_{N_1+k}, \qquad 1\leq j\leq N_1-1,\quad 1\leq k\leq N_2-1.\label{TLhomomorph}
\end{equation}
The translation generators $u^{(1)}$ and $u^{(2)}$  are mapped as (recall, we set $N=N_1+N_2$)
\begin{equation}\label{two-transl}
u^{(1)} \mapsto u\,g_{N-1}^{-1}\ldots g_{N_1}^{-1},\qquad u^{(2)}\mapsto g_{N_1}\ldots g_1 u.
\end{equation}
In terms of diagrams, these two translation generators  are presented as simply as

%%%%%% DRAW THE TRANSLATION GENERATOR U %%%%%%%%
%\newcommand{\drawu}{
 %%%%%%%%% Frame %%%%%%%%
 %	\draw[thick, dotted] (-0.05,0.5) arc (0:10:0 and -7.5);
 %	\draw[thick, dotted] (-0.05,0.55) -- (3.25,0.55);
 %	\draw[thick, dotted] (3.25,0.5) arc (0:10:0 and -7.5);
%	\draw[thick, dotted] (-0.05,-0.85) -- (3.25,-0.85);
%%%%%%%%%%%%	
%	\draw[thick] (0.3,0.5) arc (0:10:20 and -3.75);
%	\draw[thick] (2.7,-0.81) arc (0:10.6:-30 and 3.75);
%
%	\draw[thick] (0.9,0.5) arc (0:10:40 and -7.6);
%	\draw[thick] (1.5,0.5) arc (0:10:40 and -7.6);
%	\draw[thick] (2.1,0.5) arc (0:10:40 and -7.6);
%	\draw[thick] (2.7,0.5) arc (0:10:40 and -7.6);
%}

\begin{equation}\label{u1-def}
 \begin{tikzpicture}
 \node[font=\large]  at (-1.5,-2.2) {\mbox{} $u^{(1)}\;\;\mapsto$ \mbox{}\qquad};
\drawu 
\braid[braid colour=black,strands=5,braid width=\brw,braid start={(-0.3,-0.88)}]  {\g_4^{-1}\g_3^{-1}}
\node[font=\large]  at (4.0,-2.2) {\mbox{}\qquad \(=\)};
\end{tikzpicture}
\qquad
 \begin{tikzpicture}
 %%%%%%%%% Frame %%%%%%%%
 	\draw[thick, dotted] (-0.05,0.5) arc (0:10:0 and -7.5);
 	\draw[thick, dotted] (-0.05,0.55) -- (3.25,0.55);
 	\draw[thick, dotted] (3.25,0.5) arc (0:10:0 and -7.5);
	\draw[thick, dotted] (-0.05,-0.85) -- (3.25,-0.85);
%%%%%%%%%%%%	

	\draw[thick] (0.3,0.5) arc (0:10:20 and -3.75);
	\draw[thick] (0.9,0.5) arc (0:10:40 and -7.6);
	\draw[thick] (1.5,0.5) arc (0:10:40 and -7.6);
	
%	\draw[thick] (1.7,-0.18) .. controls (0.15,0.08)   .. (0.43,0.1);
%	\draw[thick] (0.57,0.1) -- (1.03,0.1);
%	\draw[thick] (1.17,0.1) -- (1.63,0.1);
%	\draw[thick] (1.77,0.1) .. controls (2.1,0.2)   .. (2.3,0.5);

%	\draw[thick] (1.6,-0.81) arc (0:10:-30 and 3.5);
	\draw[thick] (1.6,-0.81) .. controls (1.7,-0.3)   .. (2.1,-0.25);
		\draw[thick] (2.1,-0.25) -- (3,-0.25);
	\draw[thick] (2.1,0.5) -- (2.1,-0.15);
	\draw[thick] (2.1,-0.35) -- (2.1,-0.8);
	\draw[thick] (2.7,0.5) -- (2.7,-0.15);
	\draw[thick] (2.7,-0.35) -- (2.7,-0.8);
		\draw[thick] (3,-0.25) .. controls (3.13,-0.2)   .. (3.21,-0.1);
\end{tikzpicture}
\end{equation}
where we assumed that $N_1=3$ and $N_2=2$, and for the second translation $u^{(2)}$ we have the diagram
\begin{equation}\label{u2-def}
 \begin{tikzpicture}
 \node[font=\large]  at (-1.5,-0.2) {\mbox{} $u^{(2)}\;\;\mapsto$ \mbox{}\qquad};
 
\drawu 

\braid[braid colour=black,strands=5,braid width=\brw,braid start={(-0.28,3.6)}]  { \g_3 \g_2 \g_1}

\node[font=\large]  at (4.0,-0.2) {\mbox{}\qquad \(=\)};
\end{tikzpicture}
\qquad
 \begin{tikzpicture}
 %%%%%%%%% Frame %%%%%%%%
 	\draw[thick, dotted] (-0.05,0.5) arc (0:10:0 and -7.5);
 	\draw[thick, dotted] (-0.05,0.55) -- (3.25,0.55);
 	\draw[thick, dotted] (3.25,0.5) arc (0:10:0 and -7.5);
	\draw[thick, dotted] (-0.05,-0.85) -- (3.25,-0.85);
%%%%%%%%%%%%	
	\draw[thick] (0.0,-0.18) .. controls (0.15,0.08)   .. (0.43,0.1);
	\draw[thick] (0.58,0.1) -- (1.02,0.1);
	\draw[thick] (1.18,0.1) -- (1.62,0.1);
	\draw[thick] (1.77,0.1) .. controls (2.1,0.2)   .. (2.3,0.5);

	\draw[thick] (2.9,-0.81) arc (0:10:-20 and 3.75);

	\draw[thick] (0.5,0.5) -- (0.5,-0.8);
	\draw[thick] (1.1,0.5) --  (1.1,-0.8);
	\draw[thick] (1.7,0.5) -- (1.7,-0.8);
	\draw[thick] (2.75,0.5) arc (0:10:40 and -7.6);

\end{tikzpicture}
\end{equation}
or in words the rightmost string of $u^{(1)}$ (the one that starts at position $N_1$) passes {\sl above} the $N_2$ through-lines on the right from it and ends at the position $1$, and similarly for $u^{(2)}$ -- the leftmost string passes {\sl under} the $N_1$ through-lines on the left from it.
  It is then an easy (in terms of diagrams) calculation using the braid relations to check
\begin{equation}\label{comm-transl}
u^{(1)} u^{(2)} = u^{(2)} u^{(1)}.
\end{equation}
Due to the normalization of $g_i$'s as in~\eqref{hecke-gen}, we have the relations 
\begin{equation}
g_i g_{i+1} e_i =  e_{i+1} e_i,\qquad g_i^{-1} g^{-1}_{i+1} e_i =  e_{i+1} e_i
\end{equation}
and many others similar to these.
In terms of diagrams, these relations tell us  that a  TL arc (``half'' of the diagram for $e_i$) can be pulled out under or above any string at the price of the factor~$1$. We can thus simplify calculations using diagrams with braids and TL arcs only. It is only the twisting that produces a non-trivial factor $\rmi \q^{\frac{3}{2}}$:
\begin{equation}
e_i g_{i+1}e_i = \rmi \q^{\frac{3}{2}} e_i
\end{equation}
but these relations will not appear in calculations below.

Using the remarks above, we then immediately check the affine TL relations 
\begin{equation}
\bigl(u^{(1)}\bigr)^2 e_{N_1-1} = e_{1} \dots e_{N_1-1},\qquad
\bigl(u^{(2)}\bigr)^2 e_{N-1} = e_{N_1+1} \dots e_{N-1} \ .
\end{equation}

%We can now introduce the periodic TL generators $e^{(i)}_{0}$ in the two periodic subsystems as $g_{0}^{(i)}= \rmi(\q^{\half}-\q^{-\half} e^{(i)}_{0})$. This gives finally
We then define the map on the periodic TL generators $e^{(i)}_{0}$ as
% in the two periodic subsystems as $g_{0}^{(i)}= \rmi(\q^{\half}-\q^{-\half} e^{(i)}_{0})$. This gives finally
\begin{align}
e_0^{(1)} \, &\mapsto\, g_{N_1}\ldots g_{N-1}\,e_0\,g^{-1}_{N-1}\ldots g^{-1}_{N_1},\label{two-e0-1}\\
e_0^{(2)} \, &\mapsto\,  g^{-1}_{0}\ldots g^{-1}_{N_1-1}\,e_{N_1}\,g_{N_1-1}\ldots g_{0}.\label{two-e0-2}
\end{align}
Note that in terms of diagrams, this result is  natural, as illustrated below (again for $N_1=3$ and $N_2=2$)

\begin{equation}\label{u2-def}
 \begin{tikzpicture}
        \node[font=\large]  at (-3,0.4) {\mbox{} $e_0^{(1)}$ \mbox{}\qquad};
\node[font=\large]  at (-2.5,0.2) {\mbox{}\qquad \(=\)};
         \draw[thick] (0,-0.8) -- (0,-0.18);
	\draw[thick] (0.0,-0.18) .. controls (0.15,0.08)   .. (0.43,0.1);
	\draw[thick] (0.4,0.1) -- (1.02,0.1);
	\draw[thick] (1.,0.1) -- (1.5,0.1);
		\draw[thick] (0.5,0.45) -- (0.5,0.2);
	\draw[thick] (0.5,0.0) -- (0.5,-0.8);
	\draw[thick] (1.,0.45) --  (1.,0.20);
	\draw[thick] (1.,0.0) --  (1.,-0.8);

\draw[thick] (0.4,0.55)--(1.5,0.55);
\draw[thick] (0.5,0.65) -- (0.5,1.5);
\draw[thick] (1.,0.65) -- (1.,1.5);

\draw[thick] (0.0,0.8) .. controls (0.15,0.53)   .. (0.43,0.55);
\draw[thick] (0.,0.8) -- (0.,1.5);

\draw[thick] (-0.5,-0.8) -- (-0.5,1.5);

\draw[thick] (-1.,0.8) -- (-1.,1.5);
\draw[thick] (-1.,-0.8) -- (-1.,-0.1);

\draw[thick] (-1.,-0.1) .. controls (-1.1,0.09)   .. (-1.3,0.11);

\draw[thick] (-1.,0.8) .. controls (-1.1,0.5)   .. (-1.3,0.5);
\end{tikzpicture}
\end{equation}

\begin{equation}\label{u2-def}
 \begin{tikzpicture}
        \node[font=\large]  at (-3,0.4) {\mbox{} $e_0^{(2)}$ \mbox{}\qquad};
\node[font=\large]  at (-2.5,0.2) {\mbox{}\qquad \(=\)};
     
     \draw[thick] (-0.5,-0.8) -- (-0.5,1.5);
 \draw[thick] (0.,-0.8) -- (0.,1.5);
  \draw[thick] (0.5,-0.8) -- (0.5,1.5);
        \draw[thick] (-1,0.1) -- (-0.55,0.1);
  \draw[thick] (-1,0.5) -- (-0.55,0.5);
      \draw[thick] (-0.45,0.1) -- (-0.05,0.1);
  \draw[thick] (-0.45,0.5) -- (-0.05,0.5);
  \draw[thick] (0.1,0.1) -- (0.45,0.1);
  \draw[thick] (0.1,0.5) -- (0.45,0.5);
  
  \draw[thick] (1.,-0.1) .. controls (0.9,0.08)   .. (0.7,0.1);
   \draw[thick] (1.,-0.8) -- (1.,-0.1);
     \draw[thick] (0.55,0.1) -- (0.7,0.1);
     
      \draw[thick] (0.55,0.5) -- (0.7,0.5);
      \draw[thick] (1.,0.8) -- (1.,1.5);
      
       \draw[thick] (0.7,0.5) .. controls (0.85,0.55)   .. (1.,0.8);
      
        \draw[thick] (1.5,-0.8) -- (1.5,-0.1);
          \draw[thick] (1.5,0.8) -- (1.5,1.5);
       \draw[thick] (1.5,-0.1) .. controls (1.6,0.08)   .. (1.8,0.1);
         \draw[thick] (1.5,0.8) .. controls (1.6,0.55)   .. (1.8,0.55);

\end{tikzpicture}
\end{equation}
where $e_0^{(i)}$ are considered now as the images of the maps~\eqref{two-e0-1} and~\eqref{two-e0-2} (we will often use the same notation for the images of elements in the subalgebras.)

Using again the diagrammatical calculation, it is straightforward to check that
\begin{equation}
\Bigl(e_0^{(i)}\Bigr)^2 = (\q+\q^{-1})e_0^{(i)},\qquad i=1,2.
\end{equation}
Further, 
%using relations in the braid group
%\begin{equation}
%g_{N_1}\ldots g_{L-1}g_0g^{-1}_{L-1}\ldots g^{-1}_{N_1} = g^{-1}_{0}g^{-1}_{L-1}\ldots g^{-1}_{N_1+1}g_{N_1}g_{N_1+1}\ldots g_{L-1} g_{0}
%\end{equation}
we also check all the other affine TL relations
\begin{equation}
e_0^{(i)} = u^{(i)}e^{(i)}_{N_i-1}\bigl(u^{(i)}\bigr)^{-1} = (u^{(i)}\bigr)^{-1}e^{(i)}_{1}u^{(i)},\qquad i=1,2,
\end{equation}
 and
\begin{gather}
e_0^{(i)}e_1^{(i)}e_0^{(i)} = e_0^{(i)},\qquad e_1^{(i)}e_0^{(i)}e_1^{(i)}=e_1^{(i)},\\
e_0^{(i)}e_{N_i-1}^{(i)}e_0^{(i)} = e_0^{(i)},\qquad e_{N_i-1}^{(i)}e_0^{(i)}e_{N_i-1}^{(i)} = e_{N_i-1}^{(i)},
\end{gather}
where we recall that $e^{(1)}_k=e_{k}$ and $e^{(2)}_k=e_{N_2+k}$. We also see using diagrammatic computation that both the subalgebras $\ATL{N_1}$ and $\ATL{N_2}$ indeed commute
\begin{equation}
e_0^{(1)}e_0^{(2)} = e_0^{(2)}e_0^{(1)},
\end{equation}
in addition to~\eqref{comm-transl}.

\newcommand{\afemb}[1]{{\varepsilon}_{#1}}
\newcommand{\afembm}[1]{{\varepsilon}^-_{#1}}

So, we have thus constructed a homomorphism of algebras
\begin{equation}\label{aTL-embed}
\afemb{N_1,N_2}:\qquad \ATL{N_1}\otimes\ATL{N_2} \longrightarrow \ATL{N},
\end{equation}
with the image of the generators given in~\eqref{TLhomomorph}, \eqref{two-transl} and~\eqref{two-e0-1}, and~\eqref{two-e0-2}. This homomorphism has trivial kernel (by recursively using the one-step embedding~\eqref{one-step-map} that has zero kernel), so we have actually an embedding of algebras.

\bigskip

\section{Fusion of affine TL modules}\label{FusAffTL}

In this section, we introduce  \textit{fusion} for modules over the affine TL algebras using the embeddings defined in the previous section. We will use this fusion construction in the next section to define a $\oN$-graded tensor product in the affine TL representation category.

%We now turn to giving a formal definition of the fusion in the periodic lattice.
\begin{Dfn}\label{afus-def}
Let $M_1$ and $M_2$ be two modules over $\ATL{N_1}(m)$ and $\ATL{N_2}(m)$ respectively.
Then, the tensor product $M_1\tensor M_2$ is a module over
 the product $\ATL{N_1}(m)\tensor\ATL{N_2}(m)$ of the two algebras.
Using the embedding~\eqref{aTL-embed}, we consider this
 product of algebras as a subalgebra in $\ATL{N}(m)$, for $N=N_1+N_2$.
\textit{The (affine) fusion functor}~$\afus$ on two modules $M_1$ and $M_2$  is then defined as the  module induced from this subalgebra, {\it i.e.}
\begin{equation}\label{fusfunc-def}
M_1\afus M_2 = \ATL{N}\tensor_{\bigl(\ATL{N_1}\tensor\ATL{N_2}\bigr)} M_1\tensor M_2,
\end{equation}
where we used the balanced tensor product over $\ATL{N_1}\tensor\ATL{N_2}$ and we abuse the notation by writing $\ATL{N}$ instead of $\ATL{N}(m)$.
\end{Dfn}

Below we give explicit examples of the affine TL fusion calculation. 
Before doing this, let us recall the basic $\ATL{N}$-modules called the standard modules.

\subsection{Standard $\ATL{N}$ modules}\label{sec:Wz}
We introduce here the standard modules $\StJTL{j}{z}[N]$  over $\ATL{N}(m)$,  which are generically irreducible, and give then several examples of  explicit calculations of the fusion.
The standard modules   are parametrized by pairs $(j,z)$, with a half-integer $j$ and a non-zero complex number $z$. In terms of diagrams, the first is the number of
 through-lines, which we denote by $2j$, $0\leq j\leq N/2$, connecting
 the inner boundary of the annulus with $2j$ sites and the outer boundary with $N$ sites.
 % the $2j$ sites on the inner boundary we call free or non-contractible. 
 For example, the diagrams
  \begin{tikzpicture}
   %%%%%%%%% Frame %%%%%%%%
 	\draw[thick, dotted] (-0.05,0.1) arc (0:10:0 and -3.5);
 	\draw[thick, dotted] (-0.05,0.1) -- (1.05,0.1);
 	\draw[thick, dotted] (1.05,0.1) arc (0:10:0 and -3.5);
	\draw[thick, dotted] (-0.05,-0.55) -- (1.05,-0.55);
%%%%%%%%%%%%	
	\draw[thick] (0,-0.2) arc (-90:0:0.25 and 0.25);
	\draw[thick] (0.4,0.05) arc (0:10:0 and -2.6);
	\draw[thick] (0.6,0.05) arc (0:10:0 and -2.6);
	\draw[thick] (1.0,-0.2) arc (-90:0:-0.25 and 0.25);
	\end{tikzpicture}
  \, and \, 
  \begin{tikzpicture}
     %%%%%%%%% Frame %%%%%%%%
 	\draw[thick, dotted] (-0.095,0.1) arc (0:10:0 and -3.5);
 	\draw[thick, dotted] (-0.095,0.1) -- (1.025,0.1);
 	\draw[thick, dotted] (1.0,0.1) arc (0:10:0 and -3.5);
	\draw[thick, dotted] (-0.095,-0.55) -- (1.025,-0.55);
%%%%%%%%%%%%	
  \draw[thick] (0,0.05) arc (0:10:0 and -2.6);
	\draw[thick] (0.2,0.05) arc (-180:0:0.25 and 0.25);
	\draw[thick] (0.9,0.05) arc (0:10:0 and -2.6);
	\end{tikzpicture}
 correspond to $N=4$ and $j=1$, where as usual we identify the left and right sides of the framing rectangles, so the diagrams live on the annulus. We call such diagrams \textit{affine}. The action of an element $a\in\ATL{N}(m)$ on $v\in\StJTL{j}{z}$  is then defined
% in a natural way on these diagrams,
 by stacking the diagrams: joining the inner boundary of $a$ to the outer boundary 
 of the diagram for $v$,
 %of a diagram from $\ATL{\q,N}$
  and removing the
 interior sites. As usual, a closed contractible loop is replaced by
the factor  $m=\q+\q^{-1}$ (we will often use this parametrisation by a complex number $\q$). Whenever the affine diagram thus obtained has a number of
 through lines less than $2j$, the action is zero. For a given
 non-zero value of $j$, it is possible in this action to earn a winding number of the through-lines.
 %to cyclically
 %permute the free sites: this gives rise to the introduction of a
 % pseudomomentum $K$ (not to be confused with the quantum group
 %generator). 
% A more pedantic
% definition due to~\cite{GL} is 
In this case, we imply the relation~\cite{GL} 
\begin{equation*}
\mu=\mu'\circ u_j^n \equiv z^{n}\mu',
\end{equation*} 
where $\mu$ is an affine diagram with $2j$ through lines,  $\mu'$ is a so-called standard diagram which has no through
lines winding the annulus and $u_j$ is the
translational operator acting on the $2j$ sites of the inner boundary of $\mu'$.
%through lines by shifting a free site by one, 
 Said differently, whenever $2j$ through-lines wind counterclockwise around
 the annulus $l$ times, we unwind them at the price of a factor
 $z^{2jl}$;
 %$e^{2ijlK}$; 
 similarly, for clockwise winding, the phase 
% chgd
is
% $e^{-i 2jlK}$
$z^{-2jl}$~\cite{MartinSaleur,MartinSaleur1}. This is for $j>0$.
% Note that we used a parametrization such that
%different pairs $(j,e^{2iK})$ correspond to non-isomorphic modules
%over the even-rank subalgebra $\oATL{N}(m)\subset\ATL{N}(m)$
%introduced in Sec.~\ref{sec:TL-alg-def}. 
%In the parametrization $(t,z)$ chosen
%in~\cite{GL}, this corresponds to $t=2j$ and the twist parameter $z^2=e^{2iK}$.
If $j=0$, by the concatenating the diagrams we can produce a non-contractible loop and it has to be replaced by the factor $z+z^{-1}$. Such action gives rise to a generically
irreducible $\ATL{N}(m)$ module, which we denote
%\footnote{We  do not write the dependency on $N$ when no ambiguity is possible. }
 by
$\StJTL{j}{z}[N]$.

% in front of the (so-called standard) diagram with the same outer boundary (of $2L$ sites) and no through lines winding the annulus.
 The dimensions of these modules $\StJTL{j}{z}$ are then given by 
 \begin{equation}\label{dim-dj}
 \hat{d}_{j}[N]\equiv \dim \StJTL{j}{z}[N] =
 \binom{N}{{N\over 2}+j},\qquad j\geq 0.
 \end{equation}
Note that these numbers do not depend on $z$ (but modules with
different $z$ are not isomorphic).

\subsection{Examples of the fusion}\label{sec:ex-fus}
We consider here  examples of the fusion defined in Def.~\ref{afus-def} for several pairs of  standard modules. We also assume that $\q$ is generic, i.e. not a root of unity.

\subsubsection{Fusion on $1+1$ sites}\label{sec:ex-fus-1}
We begin with a simple example of the fusion of the pair of standard modules $\StJTL{\half}{z}$ on $1+1$ sites:  \begin{equation}\label{eq:example-fusion-1}
\StJTL{\half}{z_1}[1]\afus\StJTL{\half}{z_2}[1] =\mathrm{Ind}_{\ATL{1}\tensor\ATL{1}}^{\ATL{2}}
%\ATL{2}\tensor_{\bigl(\ATL{1}\tensor\ATL{1}\bigr)} 
\StJTL{\half}{z_1}[1]\tensor\StJTL{\half}{z_2}[1],
\end{equation}
where the affine TL on 1 site is the commutative algebra generated by the translation generator, i.e. $\ATL{1}=\oC[u^{\pm1}]$. We say that the
 left $\ATL{1}$ (the left component of the tensor product $\ATL{1}\tensor\ATL{1}$) is generated by $u^{(1)}$ and the right one is generated by $u^{(2)}$. Then, the module $\StJTL{\half}{z_1}\tensor\StJTL{\half}{z_2}$ is one-dimensional and has the basis element $\vv$ with the action
 \begin{equation}\label{vv-act}
 u^{(1)} \vv = z_1 \vv,\qquad u^{(2)} \vv = z_2 \vv.
 \end{equation}
We have to write now all the relations in the module $\StJTL{\half}{z_1}\tensor\StJTL{\half}{z_2}$ using expressions~\eqref{two-transl} for the generators of the ``small'' algebra $\ATL{1}\tensor\ATL{1}$ in terms of elements of  the ``big'' algebra $\ATL{2}$. So, using~\eqref{vv-act} and~\eqref{two-transl} we have
\begin{align}
 z_1 \vv &= u g_1^{-1} \vv = -i \bigl(\q^{-\half} u  - \q^{\half} u e_1\bigr)\vv,\label{ex1-rel1}\\
 \ffrac{1}{z_1} \vv &=  g_1 u^{-1} \vv = i \bigl(\q^{\half} u^{-1}  - \q^{-\half} e_1 u^{-1}\bigr)\vv,\label{ex1-rel2}\\
 z_2 \vv &= g_1 u \vv = i \bigl(\q^{\half} u  - \q^{-\half}  e_1 u \bigr)\vv,\label{ex1-rel3}\\
 \ffrac{1}{z_2}\vv &=  u^{-1}  g_1^{-1} \vv = - i \bigl(\q^{-\half} u^{-1}  - \q^{\half}  u^{-1}e_1\bigr)\vv\label{ex1-rel4}
\end{align}
and we note here that $u e_1= u^{-1}e_1$ and $e_1 u^{-1}=e_1 u$ because of the relation~\eqref{TLpdef-u2} that takes the form $u^2 e_1 = e_1$. 

Therefore, we have two equations: taking the difference between first and fourth equations we get
\begin{equation}\label{u-uinv}
(u-u^{-1})\vv = i \q^{\half} (z_1 - z_2^{-1})\vv
\end{equation}
and second minus third gives
\begin{equation}
(u-u^{-1})\vv = i \q^{-\half} (z_1^{-1} - z_2)\vv
\end{equation}
and finally the relation between $z_1$ and $z_2$ is
\begin{equation}
\q (z_1 - z_2^{-1}) = (z_1^{-1} - z_2)
\end{equation}
that has only two solutions
\begin{equation}\label{cond-z}
z_2 = z_1^{-1} \qquad \text{or} \qquad z_2=-\q z_{1}.
\end{equation}
It tells us that the fusion or the induced module in~\eqref{eq:example-fusion-1} is zero when the condition~\eqref{cond-z} is not satisfied. 

We then construct a basis for the fusion  in the two different cases: (i) $z_2=-\q z_{1}$ and  (ii) $z_2 = z_1^{-1}$. It turns out that in the case (i) the fusion is a one-dimensional $\ATL{2}$-module while it is two-dimensional in the case (ii). Indeed, assume that $z_2=-\q z_{1}$ then we have the relation (using~\eqref{u-uinv})
\begin{equation}\label{ex1:uu}
u^2\vv = i \q^{\half} (z_1 + \q^{-1} z_1^{-1})u\vv + \vv. 
\end{equation}
The relations~\eqref{ex1-rel1}-\eqref{ex1-rel4} are not the only independent relations in $\StJTL{\half}{z_1}\tensor\StJTL{\half}{z_2}$. We have four more
\begin{align}
z_1 z_2 \vv &= u^{(1)}u^{(2)} \vv = u^2 \vv,\label{ex1-rel5}\\
\ffrac{1}{z_1 z_2} \vv &= \bigl(u^{(1)}u^{(2)}\bigr)^{-1} \vv = u^{-2} \vv,\\
\ffrac{z_1}{z_2} \vv &=  u^{(1)}\bigl(u^{(2)}\bigr)^{-1} \vv =(-\q^{-1} + e_1 + e_0 -\q e_0e_1)\vv,\label{ex1-rel7}\\
\ffrac{z_2}{z_1} \vv &=  u^{(2)}\bigl(u^{(1)}\bigr)^{-1} \vv =(-\q + e_1 + e_0 -\q^{-1} e_1e_0)\vv.\label{ex1-rel8}
\end{align}
Using the first one together with~\eqref{ex1:uu} we get
\begin{equation}\label{ex1-rel9}
u\vv = i\q^{\half} z_1 \vv
\end{equation}
and using then~\eqref{ex1-rel2} we get (recall that $e_0=u e_1 u^{-1}$)
\begin{equation}\label{ex1-rel10}
e_1\vv = 0,\qquad e_0\vv = 0.
\end{equation}
Note that the relations~\eqref{ex1-rel7} and~\eqref{ex1-rel8} are then trivially satisfied.
The  equation~\eqref{ex1-rel10} actually could be immediately deduced directly from the equation~\eqref{ex1-rel5} that tells that $u^2$ acts on $\vv$ by $-\q z_1^2$ and for generic $z_1$ it is not possible for the $\ATL{2}$-module $\StJTL{0}{z}$, where $u^2$ acts as identity. Therefore $\vv$ has to belong to $\StJTL{1}{z}$ where $e_1$ and $e_0$ act by zero. The value of $z$ here  is $ i\q^{\half} z_1$ as follows from~\eqref{ex1-rel9}. So, we conclude that because all the generators of $\ATL{2}$ but $u^{\pm1}$ act on $\vv$ as zero, the induced module~\eqref{eq:example-fusion-1} is one-dimensional and isomorphic to $\StJTL{1}{ i\q^{\half} z_1}$.

Now we turn to the case (ii) when we fix $z_2=z_1^{-1}$. In this case, both~\eqref{u-uinv} and~\eqref{ex1-rel5} give the same relation
\begin{equation}\label{ex1-rel11}
u^2\vv =\vv
\end{equation}
Using our basic relations~\eqref{ex1-rel1}-\eqref{ex1-rel2} we obtain the relations
\begin{align}
ue_1\vv &= \q^{-1}u \vv - i\q^{-\half} z_1 \vv,\\
e_1u\vv &= \q u \vv + i\q^{\half} z_1^{-1} \vv
\end{align}
and so applying $u^{-1}$ on the both sides of the equations and using~\eqref{ex1-rel11} we get
\begin{align}
e_1\vv &= \q^{-1}\vv - i\q^{-\half} z_1 u \vv,\\
e_0\vv &= \q  \vv + i\q^{\half} z_1^{-1} u\vv
\end{align}
and similar formulas for $ue_0\vv$ and $e_0 u\vv$.
Therefore, the action of $e_0 e_1$, $e_1e_0$, etc., on $\vv$ is a linear combination of $\vv$ and $u\vv$. Therefore the induced module~\eqref{eq:example-fusion-1} for $z_2=z_1^{-1}$ is two-dimensional and irreducible (the irreducibility is easy to check for generic $z_1$) and thus isomorphic to $\StJTL{0}{z}$. The basis in this module can be chosen as 
$\{\vv_1=e_1\vv,\, \vv_0 = -i\q^{-3/2}z_1 e_0\vv\}$ which is the standard affine diagrams basis:
$\vv_1=  \begin{tikzpicture}
     %%%%%%%%% Frame %%%%%%%%
 	\draw[thick, dotted] (-0.095,0.1) arc (0:10:0 and -3.5);
 	\draw[thick, dotted] (-0.095,0.1) -- (1.025,0.1);
 	\draw[thick, dotted] (1.0,0.1) arc (0:10:0 and -3.5);
	\draw[thick, dotted] (-0.095,-0.55) -- (1.025,-0.55);
%%%%%%%%%%%%	
	\draw[thick] (0.2,0.05) arc (-180:0:0.25 and 0.25);
	\end{tikzpicture}$
  \, and \,
$\vv_0=  \begin{tikzpicture}
   %%%%%%%%% Frame %%%%%%%%
 	\draw[thick, dotted] (-0.05,0.1) arc (0:10:0 and -3.5);
 	\draw[thick, dotted] (-0.05,0.1) -- (1.05,0.1);
 	\draw[thick, dotted] (1.05,0.1) arc (0:10:0 and -3.5);
	\draw[thick, dotted] (-0.05,-0.55) -- (1.05,-0.55);
%%%%%%%%%%%%	
	\draw[thick] (0,-0.2) arc (-90:0:0.25 and 0.25);
	\draw[thick] (1.0,-0.2) arc (-90:0:-0.25 and 0.25);
	\end{tikzpicture}$\,.
The weight of the non-contractible loops $z+z^{-1}$ is then computed as
\begin{equation}
e_0 \vv_1 = (z+z^{-1}) \vv_0\qquad \text{and}\qquad e_1 \vv_0 = (z+z^{-1}) \vv_1
\end{equation}
and a simple calculation gives $z=-i\q^{-\half}z_1$. Note that in this case, there is in fact an ambiguity for the sign of $z$, since the pair $(\vv_1,z)$ is defined only up to a sign: in fact, the two modules $\StJTL{0}{\pm z}[2]$ are isomorphic and our choice of the sign in $z$ is just a convention.

Finally, after simple but long calculations we conclude the fusion formula
\begin{equation}\label{afus-ex1}
\StJTL{\half}{z_1}[1]\afus\StJTL{\half}{z_2}[1] =
\begin{cases}
\StJTL{1}{ i\q^{\half} z_1}[2]\qquad & \qquad \text{when}\; z_2=-\q z_1,\\
\StJTL{0}{- i\q^{-\half}z_1}[2]\qquad & \qquad \text{when}\; z_2=z_1^{-1},\\
0\qquad & \qquad \text{otherwise.}
\end{cases}
\end{equation}

Note that we have the same conditions on non-zero fusion  $\StJTL{\half}{z_1}[1]\afus\StJTL{\half}{z_2}[3]$ on $1+3$ sites, as the basic relations have the same form as here. It  just takes more calculations to find a basis in the two non-zero cases. It will be proven below (using our construction of direct systems of categories) that the formula~\eqref{afus-ex1} holds in this case as well.

\subsubsection{Fusion on $1+2$ sites}
By a direct calculation on $1+2$ sites similar to the previous calculation, we have found the following fusion
\begin{equation}\label{afus-ex2}
\StJTL{\half}{z_1}[1]\afus\StJTL{1}{z_2}[2] =
\begin{cases}
\StJTL{\frac{3}{2}}{-\q z_1}[3] \qquad & \qquad \text{when}\; z_2=-i\q^{3/2} z_1,\\
\StJTL{\half}{-\q/ z_1}[3]  \qquad & \qquad \text{when}\; z_2=i\q^{1/2}z_1^{-1},\\
0\qquad & \qquad \text{otherwise.}
\end{cases}
\end{equation}

 A general formula for the fusion on $N_1+N_2$ sites for any pair of the standard modules $\StJTL{j_1}{z_1}[N_1]$ and $\StJTL{j_2}{z_2}[N_2]$ is derived in~\cite{GJS}, where we also discuss physical implications of the results.

\medskip

An alternative to the diagrammatical calculation is presented in App.~\ref{app:fusion-example-3} where  we give a few more examples. We use there the relation to the affine Hecke algebra discussed below.

\subsection{Remark on the affine Hecke algebra}

The affine Temperley Lieb algebra $\ATL{N}(m)$ is known to be deeply related to the affine Hecke algebra $\AH{N}(\q)$ with $m=\q+\q^{-1}$. It is useful here to recall some basic facts about the latter~\cite{ChariPressley}. 
We then show how our definition of the affine TL tower is related to the more standard tower of affine Hecke algebras.

\subsubsection{Definition I:}
The algebra $\AH{N}(\q)$ is usually defined as follows: it is an associative algebra over $\oC$ generated by 
  $\sigma_i$, with $1\leq i\leq N-1$,  and $y^{\pm1}_j$, with  $1\leq j\leq N$, subject to the relations\footnote{We use here the standard conventions to facilitate comparison with the literature. A variant - more suitable to the $\ATL{N}$ quotient is discussed in the appendix.}
\begin{eqnarray}
    (\sigma_{i}+1)(\sigma_{i}-\q^{-2})&=&0\nonumber\\
    \left[\sigma_{i},\sigma_{j}\right]&=&0, \qquad |i-j|\geq 2\nonumber\\
    \sigma_{i}\sigma_{i+1}\sigma_{i}&=&\sigma_{i+1}\sigma_{i}\sigma_{i+1}
\end{eqnarray}
together with 
\begin{eqnarray}
      \left[y_{i},y_{j}\right]&=&0\nonumber\\
    \left[y_{j},\sigma_{i}\right]&=&0, \qquad  j\neq i,i+1\nonumber\\
    \sigma_{i}y_{i}\sigma_{i}&=&\q^{-2}y_{i+1}\ .\label{theyrelations}
\end{eqnarray}
Note that we have thus the  braid generators $\sigma_i$ (subject to the standard Hecke relations) and the family of commutative generators $y_j$ that have the commutation relations 
$\sigma_i y_i= \q^{-2} y_{i+1}\sigma_i^{-1} = y_{i+1}\sigma_i+(1-\q^{-2})y_{i+1}$. The algebra $\AH{N}$ can be thus considered as a twisted tensor product $\fH{N}\otimes \oC[y^{\pm1}_1, \dots, y^{\pm1}_N]$, where $\fH{N}$ is the finite Hecke algebra. It is well-known indeed that $\AH{N}$ is isomorphic as a vector space to the tensor product of the finite Hecke algebra and the algebra of Laurent polynomials in $y_j$.

\subsubsection{Zelevinsky's tensor product}
The definition of $\AH{N}$ given above in terms of $\sigma_i$ and $y_j$ leads us to a (well known) homomorphism of algebras 
 \begin{equation}\label{H-emb}
 \AH{N_1}(\q)\otimes \AH{N_2}(\q) \mapsto \AH{N_1+N_2}(\q)
 \end{equation}
 where, in the same notations as in (\ref{TLhomomorph}), we have 
 \begin{equation}
 \sigma^{(1)}_j\mapsto \sigma_j,\qquad \sigma^{(2)}_k\mapsto \sigma_{N_1+k}, \qquad 1\leq j\leq N_1-1,\quad 1\leq k\leq N_2-1.\label{AHhomomorph}
\end{equation}
and 
 \begin{equation}
y^{(1)}_j\mapsto y_j,\qquad y^{(2)}_k\mapsto y_{N_1+k}, \qquad 1\leq j\leq N_1,\quad 1\leq k\leq N_2.\label{AHhomomorph1}
\end{equation}
It is thus an embedding of the algebras.  

Having the embedding in~\eqref{H-emb}, we can now define the affine Hecke fusion $\ahfus$ as the induced module (see e.g.~\cite{ChariPressley})
\begin{equation}\label{aH-fusfunc-def}
M_1\ahfus M_2 = \AH{N}(\q)\tensor_{\bigl(\AH{N_1}\tensor\AH{N_2}\bigr)} M_1\tensor M_2,
\end{equation}
where $M_1$ and $M_2$ are modules over $\AH{N_1}(\q)$ and $\AH{N_2}(\q)$, respectively.
This fusion was originally introduced by Zelevinsky and since then is usually called \textit{the Zelevinsky's tensor product} of affine Hecke algebra modules.

\subsubsection{Definition II:}
Meanwhile, the algebra $\AH{N}(\q)$  admits another definition
involving generators $\sigma_i$, $i=1,\ldots,N$ (recall $i$ was running only up to $N-1$ in the previous definition) and a translation generator $\tau$ such that 
\begin{eqnarray}
    (\sigma_{i}+1)(\sigma_{i}-\q^{-2})&=&0\nonumber\\
    \left[\sigma_{i},\sigma_{j}\right]&=&0, \qquad |i-j|\geq 2\nonumber\\
    \sigma_{i}\sigma_{i+1}\sigma_{i}&=&\sigma_{i+1}\sigma_{i}\sigma_{i+1}\nonumber\\
    \tau \sigma_{i}\tau^{-1}&=&\sigma_{i+1}
\end{eqnarray}
and the indices have to be interpreted modulo $N$. The equivalence of the two definitions follows from  the identification
\begin{equation}
\tau=y_1 \sigma_1\ldots\sigma_{N-1}\label{identif}\ ,
\end{equation}
see a complete proof in~\cite{GL2}.
Note that the relations are invariant under rescaling of $\tau$ in the second   definition, and $y_1$ in the first definition.

\bigskip

Let us now see what happens to the homomorphism~\eqref{AHhomomorph}-\eqref{AHhomomorph1} with the second definition of $\AH{N}$. We have now
\begin{equation}
\tau^{(1)}\mapsto y_1\sigma_1\ldots \sigma_{N_1-1},\qquad \tau^{(2)}\mapsto y_{N_1+1}\sigma_{N_1+1}\ldots \sigma_{N_1+N_2-1}
\end{equation}
Meanwhile, we have also for the algebra on $N_1+N_2$ sites
\begin{equation}
\tau=y_1 \sigma_1\ldots\sigma_{N_1+N_2-1}
\end{equation}
An easy calculation then leads to 
\begin{eqnarray}
\tau^{(1)}=\tau \sigma_{N_1+N_2-1}^{-1}\ldots \sigma_{N_1}^{-1},\qquad \tau^{(2)}=\q^{2N_1}\sigma_{N_1}\ldots\sigma_1\tau
\end{eqnarray}
These formulas  become identical\footnote{Note that in the affine Hecke algebra, the overall normalization of the $y_i$ generators (in the first definition) or the $\tau$ generator (in the second definition) is not fixed, in contrast with the normalization of $u$ in the definition of affine TL  (see eq.~(\ref{TLpdef-u2})).} with those we used earlier in (\ref{two-transl}) after taking  the proper quotient of $\AH{N}(\q)$ which we describe now.

We now recall that the affine Temperley--Lieb algebra $\ATL{N} \equiv \ATL{N}(\q+\q^{-1})$ can be obtained from the affine Hecke in two steps~\cite{GL2}: first, we demand the relations
\begin{equation}\label{Ei-def}
E_i \equiv 1+\sigma_i+\sigma_{i+1}+\sigma_i\sigma_{i+1}+\sigma_{i+1}\sigma_i+\sigma_i\sigma_{i+1}\sigma_i=0,\qquad i=1,\ldots,N,
\end{equation}
or equivalently  take a quotient of the affine Hecke algebra $\AH{N}$ by the two sided ideal generated by $E_1$ (note that all $E_i$'s are in the ideal).
This quotient  -- denote it $\widehat{\mathsf{TL}}_{N}$ -- is not in itself the affine Temperley-Lieb algebra $\ATL{N}$ because of  the second relation in (\ref{TLpdef-u2}) which does not follow from~\eqref{Ei-def}.
(This relation follows instead  from considering the realization as an algebra of diagrams.)
The second step in obtaining $\ATL{N}$ is thus, after identifying $\tau$ with $(\rmi \q^{-1/2})^{N-1}u$,
and $\sigma_i$ with $\rmi\q^{-1/2}g_i$, 
  to take a quotient by the ideal generated by the element
\begin{equation}
\ue \equiv u^2e_{N-1} - e_1\ldots e_{N-1}.
\end{equation} 
In total, we  have
\begin{equation}\label{TH}
\ATL{N} = \AH{N}/\I,\qquad \text{where} \quad \I=\langle E_1, \omega\rangle
\end{equation}
where we introduce the two-sided ideal $\I$ generated by $E_1$ and $\omega$.

\subsection{The affine TL fusion from Zelevinsky's tensor product}

We will use the following lemma that relates induced modules over an algebra and over its quotient.
\begin{lemma}\label{lem:ABCI}
Let $A$ be an associative algebra,  $I$ a two-sided ideal in $A$ and $B=A/I$ the quotient algebra. Let also $C$ be a subalgebra in $A$ and $I_C=C\cap I$ ($I_C$ is thus an ideal in $C$). Then, $C/I_C$ is a subalgebra in $B$  and we  have an isomorphism of induced modules
\begin{equation}\label{ABCI}
\Ind_{C/I_C}^B M  \cong \Ind_C^A\, M/(I \cdot \Ind_C^A\, M) \ ,
\end{equation}
where $M$ is a left $C$-module with trivial action of $I_C$, i.e., $M$ is also a module over the quotient algebra $C/I_C$.
\end{lemma}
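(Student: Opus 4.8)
The plan is to prove the isomorphism \eqref{ABCI} by exhibiting an explicit map and checking it is well-defined and bijective, using the standard adjunction/universal properties of induction together with the right-exactness of the tensor-product functor.

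First I would recall that induction is the functor $\Ind_C^A M = A \otimes_C M$, and analogously $\Ind_{C/I_C}^B M = B\otimes_{C/I_C} M$, where the $C/I_C$-module structure on $M$ makes sense precisely because $I_C$ acts by zero (the hypothesis). I would first verify the preliminary claims: that $I_C = C\cap I$ is a two-sided ideal of $C$ (immediate, since $I$ is a two-sided ideal of $A$ and $C$ is a subalgebra), and that the composite $C\hookrightarrow A \twoheadrightarrow B$ has kernel exactly $I_C$, hence factors through an injection $C/I_C \hookrightarrow B$ realizing $C/I_C$ as a subalgebra of $B$.

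Next, the heart of the argument. Applying the right-exact functor $B\otimes_A(-)$ to the surjection $A\twoheadrightarrow B = A/I$ and combining with associativity of the tensor product gives, for the left $C$-module $M$,
\begin{equation*}
B\otimes_A \bigl(A\otimes_C M\bigr) \;\cong\; B\otimes_C M \;\cong\; \bigl(A/I\bigr)\otimes_C M,
\end{equation*}
and the left-hand side equals $B\otimes_A \Ind_C^A M$, which by the right-exactness applied to $I\hookrightarrow A\twoheadrightarrow B$ is canonically $\bigl(\Ind_C^A M\bigr)/\bigl(I\cdot \Ind_C^A M\bigr)$. So it remains to identify $B\otimes_C M$ with $B\otimes_{C/I_C} M$. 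Here I would use that $I_C$ annihilates $M$ and that $I_C = C\cap I \subseteq C$ maps into $0$ inside $B$ is \emph{not} quite what is needed — rather, one observes that for the $C$-balanced tensor product $B\otimes_C M$, the relation $b c\otimes m = b\otimes cm$ for $c\in C$ already factors the $C$-action through $C/I_C$ on both sides: on $M$ because $I_C M = 0$, and on $B$ because multiplication by $C$ on $B$ factors through $C/(C\cap I) = C/I_C$ as a subalgebra of $B$. This yields the natural isomorphism $B\otimes_C M \cong B\otimes_{C/I_C} M = \Ind_{C/I_C}^B M$, completing the chain of isomorphisms.

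The main obstacle I anticipate is not any deep point but rather being careful about \emph{which} balanced tensor products are being taken and checking that the evident maps respecting the generators-and-relations presentations of the various $\otimes$'s are mutually inverse — in particular verifying that the surjection $\Ind_C^A M \to \Ind_{C/I_C}^B M$ (induced by $a\otimes m \mapsto \bar a\otimes m$) has kernel exactly $I\cdot\Ind_C^A M$, i.e.\ that the quotient $B\otimes_C M$ does not collapse more than $I\cdot(A\otimes_C M)$. This follows from the right-exactness computation above, but one should state it cleanly: tensoring $0\to I\to A\to B\to 0$ (as right $A$-modules, or just as $(A,C)$-bimodules) on the right by $\otimes_C M$ gives the exact sequence $I\otimes_C M \to A\otimes_C M \to B\otimes_C M\to 0$, whose image of the first arrow is exactly $I\cdot(A\otimes_C M)$. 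No completeness, finiteness, or flatness hypotheses are needed, so the lemma holds in the generality stated.
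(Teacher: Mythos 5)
Your proof is correct and follows essentially the same route as the paper: both reduce the right-hand side to $B\otimes_{C/I_C}M$ by manipulating balanced tensor products. The only difference is one of detail — where the paper asserts in one line that $A\otimes_C M/(I\cdot A\otimes_C M)\cong (A/I)\otimes_{C/I_C}M$ "follows from the definition of the balanced tensor product," you justify it cleanly via right-exactness of $B\otimes_A(-)$, associativity, and the observation that both the right $C$-action on $B$ and the left $C$-action on $M$ factor through $C/I_C$.
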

\begin{proof}
We first show that $I_C=C\cap I$ is an ideal in $C$: let $r\in I_C$ then on one side $a\cdot r\cdot b\in C$ for any $a,b\in C$ because $r\in C$ and on the other side $a\cdot r\cdot b$ is also in $I$ because $r\in I$; therefore, $a\cdot r\cdot b$ is in $I_C$. We consider then the quotient $C/I_C$ which is obviously a subalgebra in $B=A/I$. Therefore, the left hand side of~\eqref{ABCI} is well-defined. We rewrite then the right hand side of~\eqref{ABCI}   as
$$
 A \otimes_C M/(I \cdot A\otimes_C M) \cong (A/I) \otimes_{C/I_C} (M/I_C\cdot M) = B\otimes_{C/I_C} M,
$$
where the first isomorphism follows simply from the definition of the balanced tensor product while the second is by definition of $B$ and our assumption on $M$, which has the trivial $I_C$ action. This finishes the proof of the lemma.
\end{proof}
\medskip

In our context, $A=\AH{N}$ with the ideal $I=\I$ defined in~\eqref{TH} and the quotient algebra $B=\ATL{N}$, the subalgebra $C$ is the product $\AH{N_1}\otimes\AH{N_2}$, with $N_1+N_2=N$, and we denote $\I_C = \I\cap C$. Using Lemma~\ref{lem:ABCI}, we have for  $\ATL{N_i}$-modules $M_i$ an isomorphism (reading~\eqref{ABCI} from right to the left)
\begin{equation}\label{H-T-fusion}
M_1\ahfus M_2/(\I\cdot M_1\ahfus M_2) \cong \ATL{N}\otimes_{\AH{N_1}\otimes\AH{N_2}/\I_C} \Bigl(M_1\tensor M_2/(\I_C\cdot M_1\otimes M_2)\Bigr)\ ,
\end{equation}
where $\ahfus$ is the affine Hecke fusion introduced in~\eqref{aH-fusfunc-def} and the quotient $\AH{N_1}\otimes\AH{N_2}/\I_C$ is considered as a subalgebra in $\ATL{N}$. 
Note that in App.~\ref{app:examples}  we actually compute the left-hand side of~\eqref{H-T-fusion} and it agrees with the affine TL fusion computed here and in~\cite{GJS}. Let us formulate the following conjecture:
 
\begin{conj}\label{conj:H-T}
We have an isomorphism $\AH{N_1}\otimes\AH{N_2}/\I_C \cong \ATL{N_1}\otimes\ATL{N_2}$ of algebras, where the ideal $\I_C$ is defined as the intersection of the ideal $\I=\langle E_1, \omega\rangle$ with the subalgebra $\AH{N_1}\otimes\AH{N_2}$ in $\AH{N_1+N_2}$.
\end{conj}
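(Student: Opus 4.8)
The plan is to deduce the isomorphism from a single commutative square relating the Zelevinsky embedding~\eqref{H-emb} to the affine Temperley--Lieb embedding~\eqref{aTL-embed}, and then to conclude by a short diagram chase; no new computation beyond those already carried out in Section~\ref{AffTLembed} and in the discussion preceding the conjecture should be required. Write $N:=N_1+N_2$; let $q\colon\AH{N}\to\ATL{N}$ be the quotient map of~\eqref{TH}, so $\ker q=\I$; let $\I^{(i)}\subset\AH{N_i}$ be the analogous ideals with $\ATL{N_i}=\AH{N_i}/\I^{(i)}$, let $q_i\colon\AH{N_i}\to\ATL{N_i}$ be the corresponding quotients, and put $\pi:=q_1\tensor q_2$. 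Since $\oC$ is a field all algebras in sight are flat, so one has the elementary facts
\[
\ker\pi=K:=\I^{(1)}\tensor\AH{N_2}+\AH{N_1}\tensor\I^{(2)},\qquad\bigl(\AH{N_1}\tensor\AH{N_2}\bigr)/K\cong\ATL{N_1}\tensor\ATL{N_2}.
\]
Writing $\iota_{\mathsf{H}}\colon\AH{N_1}\tensor\AH{N_2}\hookrightarrow\AH{N}$ for the embedding~\eqref{H-emb}, whose image is precisely the subalgebra in the statement, we have $\I_C=\iota_{\mathsf{H}}^{-1}(\I)$, and the conjecture reduces to the single assertion $\I_C=K$.

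The key claim is that the square
\[
\xymatrix@C=44pt@R=26pt{
\AH{N_1}\tensor\AH{N_2}\ar[r]^-{\iota_{\mathsf{H}}}\ar[d]_-{\pi}&\AH{N}\ar[d]^-{q}\\
\ATL{N_1}\tensor\ATL{N_2}\ar[r]^-{\afemb{N_1,N_2}}&\ATL{N}
}
\]
commutes, which is essentially what the remarks preceding the conjecture establish. Both composites $q\circ\iota_{\mathsf{H}}$ and $\afemb{N_1,N_2}\circ\pi$ are algebra homomorphisms out of $\AH{N_1}\tensor\AH{N_2}$, so it is enough to compare them on the generators $\sigma^{(i)}_j$ and $\tau^{(i)}$. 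On the braid generators $\sigma^{(i)}_j$ this is immediate from $\sigma_i\leftrightarrow\rmi\q^{-1/2}g_i$, $g_i=\rmi(\q^{1/2}\one-\q^{-1/2}e_i)$ and~\eqref{TLhomomorph}; on $\tau^{(1)}$ it follows by combining the computed image $\tau^{(1)}\mapsto\tau\,\sigma^{-1}_{N-1}\cdots\sigma^{-1}_{N_1}$ with the identification $\tau\leftrightarrow(\rmi\q^{-1/2})^{N-1}u$ and the first formula of~\eqref{two-transl}, and likewise for $\tau^{(2)}$. The one genuinely delicate point is bookkeeping of the overall normalization of the $y$-generators (equivalently of the $\tau^{(i)}$): with the naive Zelevinsky normalization, $q\circ\iota_{\mathsf{H}}$ and $\afemb{N_1,N_2}\circ\pi$ assign to $\tau^{(2)}$ values differing by a scalar — a power of $-\q$ — which must be absorbed by a common rescaling of the $y^{(i)}_j$. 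This is harmless, since such a rescaling is an algebra automorphism of $\AH{N_i}$ and changes neither the image subalgebra nor $\I_C$; but it is \emph{forced}, not merely available, because the affine TL normalization of $u$ is rigid — pinned down by~\eqref{TLpdef-u2} — so the Hecke-side freedom has to be spent to match it.

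Granting the square, the conclusion is a one-line chase: $\afemb{N_1,N_2}$ is injective (Section~\ref{AffTLembed}) and $\ker\pi=K$, hence
\[
\I_C=\iota_{\mathsf{H}}^{-1}(\I)=\iota_{\mathsf{H}}^{-1}(\ker q)=\ker\bigl(q\circ\iota_{\mathsf{H}}\bigr)=\ker\bigl(\afemb{N_1,N_2}\circ\pi\bigr)=\ker\pi=K,
\]
so $\bigl(\AH{N_1}\tensor\AH{N_2}\bigr)/\I_C\cong\ATL{N_1}\tensor\ATL{N_2}$, as claimed. As an independent check of the easy inclusion $K\subseteq\I_C$: $\iota_{\mathsf{H}}(E^{(1)}_1)=E_1$ and $\iota_{\mathsf{H}}(E^{(2)}_1)=E_{N_1+1}$ lie in $\I$ because every $E_i$ does, and $\iota_{\mathsf{H}}$ carries $(u^{(i)})^2e^{(i)}_{N_i-1}-e^{(i)}_1\cdots e^{(i)}_{N_i-1}$ into $\ker q=\I$, this being precisely one of the affine TL relations verified in Section~\ref{AffTLembed}; the calculations in Appendix~\ref{app:examples} give further corroboration via~\eqref{H-T-fusion} and Lemma~\ref{lem:ABCI}.

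The main obstacle is concentrated entirely in the commutativity of the square, and inside it in the normalization reconciliation: checking that after an appropriate once-and-for-all rescaling of the affine Hecke $y$-generators — forced to be consistent with the inflexible normalization~\eqref{TLpdef-u2} of $u$ — the Zelevinsky embedding genuinely descends to $\afemb{N_1,N_2}$ modulo the two ideals. If one wished to bypass the diagram chase altogether, the inclusion $\I_C\subseteq K$ could instead be approached by a basis argument, using that $\AH{N}$ is free over $\oC[y^{\pm1}_1,\dots,y^{\pm1}_N]$ with basis the finite Hecke algebra and that $\ATL{N}$ has an explicit diagram basis; tracking $\I_C$ through those bases, however, looks considerably more cumbersome than the square.
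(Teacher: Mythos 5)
First, a point of context: the paper does not prove this statement --- it is stated as Conjecture~\ref{conj:H-T} and explicitly left open (the surrounding text only offers low-$N$ examples in Appendix~\ref{app:examples} as evidence). So there is no proof of record to compare yours against; you are attempting to settle an open claim, and the question is whether your argument actually does so.

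Your commutative square and the ensuing chase are correctly set up: $\ker\pi=K$ over a field is standard, $\I_C=\ker(q\circ\iota_{\mathsf{H}})$ is right, the generator check on the $\sigma^{(i)}_j$ and $\tau^{(i)}$ does reproduce the paper's computation $\tau^{(1)}\mapsto\tau\sigma^{-1}_{N-1}\cdots\sigma^{-1}_{N_1}$, $\tau^{(2)}\mapsto \q^{2N_1}\sigma_{N_1}\cdots\sigma_1\tau$, and your handling of the $(-\q)^{N_1}$ normalization mismatch via a rescaling automorphism of the $y^{(2)}_j$ is correct and harmless. The genuine gap is the step $\ker(\afemb{N_1,N_2}\circ\pi)=\ker\pi$, which requires $\afemb{N_1,N_2}$ to be injective on the \emph{whole tensor product} $\ATL{N_1}\otimes\ATL{N_2}$. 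You cite Section~\ref{AffTLembed} for this, but the paper's justification there (``by recursively using the one-step embedding~\eqref{one-step-map} that has zero kernel'') at best establishes injectivity of the restrictions to $\ATL{N_1}\otimes\one$ and $\one\otimes\ATL{N_2}$ separately; for two infinite-dimensional algebras with commuting injective images inside $\ATL{N}$, the map out of their tensor product can still have a kernel (extra joint relations between the two images). Already for $N_1=N_2=1$ the claim is that $u g_1^{-1}$ and $g_1 u$ generate a free Laurent-polynomial ring in two variables inside $\ATL{2}$, which is plausible but nowhere proved. Worse, your own chase shows that (granting the square) the conjecture is essentially \emph{equivalent} to this injectivity: $\bigl(\AH{N_1}\otimes\AH{N_2}\bigr)/\I_C\cong\bigl(\ATL{N_1}\otimes\ATL{N_2}\bigr)/\ker\afemb{N_1,N_2}$. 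So the argument relocates the difficulty into the one unproved assertion rather than resolving it --- which is very likely why the authors left the statement as a conjecture. Your closing diagnosis that ``the main obstacle is concentrated entirely in the commutativity of the square, and inside it in the normalization reconciliation'' is therefore misplaced: the normalization is a triviality, and the real obstacle is the injectivity of $\afemb{N_1,N_2}$ on the tensor product, for which a genuine argument (e.g.\ a linear-independence statement for the images of the diagram basis of $\ATL{N_1}\otimes\ATL{N_2}$, or the basis route you sketch at the end) would be needed. Your reduction is still a useful clarification of what the conjecture amounts to, but it is not a proof.
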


Under Conj.~\ref{conj:H-T}, we obtain  the affine TL fusion $M_1 \afus M_2$ on the right hand side of~\eqref{H-T-fusion}.
We demonstrate in several examples in App.~\ref{app:examples} that the fusion $\afus$ for $\ATL{N}$ modules indeed can be computed  as the quotient $M_1\ahfus M_2/(\I\cdot M_1\ahfus M_2)$ of the fusion $\ahfus$ of the same modules but considered as $\AH{N}$-modules, i.e. of the pull-back of the affine TL modules.
% and taking the  quotient. 
However, while arbitrary modules of $\AH{N_1}$ and $\AH{N_2}$ can be fused to give a non trivial result, in general,  the quotient turns out to be empty, except when some specific ``resonance'' conditions are satisfied. This is discussed more in the appendix.

\bigskip

\section{Limits of TL-mod categories }\label{limTLcat}
We now go back to the finite TL representation categories and prepare the machinery that we are going to use in the case (more interesting to us) of  affine TL  to show that the affine TL fusion does not depends on the choice of the pair $(N_1,N_2)$
and is equipped with an associator. The affine case will be treated in the next section.

\subsection{Arc-tower of TL algebras}\label{sec:arc-tower}
Recall that in Section~\ref{sec:TL-towers} we introduced a standard tower of TL algebras by using the standard embeddings. These embeddings were used to define the fusion functor for each pair of numbers $(N_1,N_2)$. Our task here is to connect  the $\TL{N}$-mod categories at different $N$, i.e. to construct an inductive system of $\TL{N}$-mod and to show that the fusion functors induce a monoidal structure in the inductive limit category. For such a connection between different $N$, we use 
another tower of the finite Temperley--Lieb algebras given by what we call \textit{(right) arc-embeddings}:
$$
\TL{N} \xrightarrow{\quad \psi \quad} \TL{N+2}
$$
  defined in terms of TL diagrams by enlarging the $\TL{N}$ diagrams with  arcs (instead of the vertical strings) at sites $(N+1,N+2)$ in the top and bottom of the diagram, or in other words
 $$
 \psi(e_j) = \e e_j \e \ ,
 $$
  where we introduce the idempotent
 \begin{equation}\label{e-def}
 \e=\ffrac{1}{m} \,e_{N+1} \ .
 \end{equation}
 It is straightforward to check that $\psi$ defines a homomorphism of algebras with trivial kernel. Such a tower will be called \textit{arc-tower}.
 
 Note also that we have  an isomorphism $\TL{N} \cong \e \TL{N+2} \e$ and can thus consider $\TL{N}$ as an idempotent subalgebra in $\TL{N+2}$. This allows us to define two functors between the categories of TL modules as follows. Recall that $\catTL_N$ denotes the category of finite-dimensional $\TL{N}$-modules. We have the localization functor
 \begin{equation}\label{loc-fun}
 \funLoc_N: \; \catTL_{N+2} \to \catTL_{N}\qquad \text{such that}\qquad M \mapsto \e M \ ,
 \end{equation} 
 with an obvious  map on morphisms, and its right inverse, the so called globalization functor
 \begin{equation}\label{glob-fun}
 \funGl_N: \; \catTL_{N} \to \catTL_{N+2}\qquad \text{such that}\qquad M \mapsto \TL{N+2}\e \otimes_{\TL{N}} M,
 \end{equation} 
 where $\TL{N+2}\e$ is considered as a left module over $\TL{N+2}$ (by the left multiplication) and a right module over the idempotent subalgebra  $\e \TL{N+2} \e$ (by the right multiplication), the balanced tensor product is also taken over the idempotent subalgebra  $\TL{N} = \e \TL{N+2} \e$. On morphisms, we have $\funGl(f) = \mathrm{id} \otimes_{\TL{N}} f$.
 It is a simple exercise to check that the composition $\funLoc_N\circ \funGl_N$ is naturally isomorphic to the identity functor on $\catTL_N$. The reverse composition is not the identity, as the two categories are obviously not equivalent. 
 Instead, we have the following statement.
 
\begin{Prop}\label{prop:Ie}
The composition $\funGl_N\circ \funLoc_N$ maps a $\TL{N+2}$-module $M$ to $I_{\es}\cdot M$, where $I_{\es}$ is the two-sided ideal generated by $\e$ in $\TL{N+2}$.
\end{Prop}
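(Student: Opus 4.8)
**The plan is to unwind the definitions of $\funGl_N$ and $\funLoc_N$ and identify the composite with the natural multiplication map $\TL{N+2}\e \otimes_{\TL{N}} \e M \to I_{\es}\cdot M$.**

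First I would spell out what $\funGl_N\circ\funLoc_N$ does on an object: it sends $M$ to $\TL{N+2}\e \otimes_{\TL{N}} \e M$, where $\TL{N} = \e\TL{N+2}\e$. There is an obvious $\TL{N+2}$-module map
\[
\mu:\quad \TL{N+2}\e \otimes_{\TL{N}} \e M \longrightarrow M,\qquad a\e \otimes \e m \mapsto a\e m,
\]
which is well-defined because the balanced tensor product is over $\e\TL{N+2}\e$ and $\e$ is idempotent. The image of $\mu$ is clearly $\TL{N+2}\e M = \TL{N+2}\e\TL{N+2}\cdot M = I_{\es}\cdot M$, since $I_{\es} = \TL{N+2}\e\TL{N+2}$ (the two-sided ideal generated by $\e$, using $\e^2=\e$). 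So the plan reduces to showing $\mu$ is \emph{injective}, i.e. that the canonical map is an isomorphism onto $I_{\es}\cdot M$, naturally in $M$.

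For injectivity, the key structural input is the isomorphism $\TL{N}\cong\e\TL{N+2}\e$ together with the fact that $\TL{N+2}\e$ is projective (indeed free of finite rank) as a right $\e\TL{N+2}\e$-module — this is where I expect the real work to be, and it is the main obstacle. Concretely, one wants a right-module decomposition $\TL{N+2}\e \cong \bigoplus_i b_i\,(\e\TL{N+2}\e)$ for suitable $b_i\in\TL{N+2}\e$ (diagrammatically, $b_i$ runs over TL half-diagrams / link patterns on $N+2$ sites whose last two points are \emph{not} joined by the rightmost arc, so that right-multiplication by the idempotent subalgebra $\e\TL{N+2}\e\cong\TL{N}$ acts freely on the cosets). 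Granting such a decomposition, $\TL{N+2}\e\otimes_{\TL{N}}\e M \cong \bigoplus_i b_i\otimes \e M$ and the map $\mu$ becomes $\sum_i b_i\otimes \e m_i \mapsto \sum_i b_i\e m_i$; injectivity then follows because the diagrams $b_i\e$ have pairwise "independent" images — one checks that if $\sum_i b_i\e m_i = 0$ in $M$ then applying the appropriate projections (coming from the diagrammatic basis and the idempotent $\e$) forces each $\e m_i = 0$. Equivalently and more cleanly: since $\TL{N+2}\e$ is a finitely generated projective right $\TL{N}$-module, the functor $\TL{N+2}\e\otimes_{\TL{N}}(-)$ is exact and commutes with the evident identifications, and one verifies $\funLoc_N\circ\funGl_N\cong\id$ (already noted in the text) implies the counit $\mu$ of the resulting (co)monad is precisely projection onto $I_{\es}\cdot M$.

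Finally I would check naturality: for a $\TL{N+2}$-morphism $f:M\to M'$ one has $f(\e m)=\e f(m)$, so the square relating $\mu_M$, $\mu_{M'}$ and $\funGl_N\funLoc_N(f) = \id\otimes(\e f|_{\e M})$ commutes on the nose, and $f$ restricts to a map $I_{\es}\cdot M \to I_{\es}\cdot M'$. Hence $\funGl_N\circ\funLoc_N$ is naturally isomorphic to the functor $M\mapsto I_{\es}\cdot M$ (with morphisms restricted accordingly), which is the claim. The one genuinely non-formal point is the freeness of $\TL{N+2}\e$ over $\e\TL{N+2}\e\cong\TL{N}$, which I would establish by exhibiting the diagrammatic basis of $\TL{N+2}\e$ indexed by (half-diagram on $N+2$ sites with the last two points in the rightmost cap $\times$ element of $\TL{N}$), a standard but slightly fiddly count in the diagram calculus.
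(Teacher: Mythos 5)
Your reduction of the statement to the injectivity of the multiplication map $\mu:\TL{N+2}\e\otimes_{\TL{N}}\e M\to M$ is the right framing and is close in spirit to the paper's argument. The paper, however, does not argue module by module: it first uses associativity of balanced tensor products to write $\funGl_N\circ\funLoc_N(M)\cong\bigl(\TL{N+2}\e\otimes_{\TL{N}}\e\TL{N+2}\bigr)\otimes_{\TL{N+2}}M$, and then proves the single bimodule isomorphism $\TL{N+2}\e\otimes_{\TL{N}}\e\TL{N+2}\cong I_{\es}$ by exhibiting an explicit two-sided inverse to multiplication, $a\e b\mapsto a\e\otimes\e b$, well defined precisely because the tensor product is balanced over $\e\TL{N+2}\e$. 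Your proposed substitute for this step does not work as written. First, $\TL{N+2}\e$ is \emph{not} free as a right $\e\TL{N+2}\e$-module: already for $N=2$ one has $\dim\TL{4}\e=5$, which is not a multiple of $\dim\TL{2}=2$; structurally, at generic $\q$ the simple right $\TL{N}$-module of dimension $d_j[N]$ occurs in $\TL{N+2}\e$ with multiplicity $d_j[N+2]$, and these multiplicities are not proportional to $d_j[N]$. So the basis you describe does not exist (projectivity is the most one can hope for). Second, even granting a decomposition $\TL{N+2}\e\cong\bigoplus_i b_i\,\TL{N}$, the implication ``$\sum_ib_i\e m_i=0$ in $M$ forces $\e m_i=0$'' cannot be obtained by ``applying projections'': it is a statement about an arbitrary module $M$, not about the algebra, and it is exactly the content to be proved. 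Your ``more cleanly'' variant also does not close the gap: $\funLoc_N\circ\funGl_N\cong\id$ makes $\funGl_N\circ\funLoc_N$ an idempotent (co)monad but carries no information identifying its value with $I_{\es}\cdot M$.

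More seriously, the injectivity you are after actually fails for general $M$ at roots of unity, so no argument of this shape can succeed without restricting $M$. Take $p=3$ (so $m=1$), $N+2=4$, and $M=\IrTL{0}[4]$, the one-dimensional head of $\StTL{0}[4]$ on which every $e_i$ acts by $1$. Then $\e M=M\cong\StTL{0}[2]$ as a $\TL{2}$-module, and since $\StTL{0}[2]\cong\TL{2}e_1$ is projective one computes $\funGl_2\circ\funLoc_2(M)\cong\TL{4}\e\otimes_{\TL{2}}\StTL{0}[2]\cong\TL{4}e_1e_3\cong\StTL{0}[4]$, which is two-dimensional, whereas $I_{\es}\cdot M=M$ is one-dimensional; thus $\mu$ has a one-dimensional kernel, namely $\mathrm{Tor}_1^{\TL{4}}(\TL{4}/I_{\es},M)$. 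The clean general statement is $\funGl_N\circ\funLoc_N(M)\cong I_{\es}\otimes_{\TL{N+2}}M$, which agrees with $I_{\es}\cdot M$ exactly when $\mathrm{Tor}_1^{\TL{N+2}}(\TL{N+2}/I_{\es},M)=0$ --- automatic for generic $\q$, and true for the standardly filtered modules to which the proposition is applied later in the paper, but not for arbitrary $M$. (The same caveat applies to the paper's own last line, where $I_{\es}\otimes_{\TL{N+2}}M=I_{\es}\cdot M$ is asserted without this condition.) So besides repairing the two technical points above, you should either impose such a hypothesis on $M$ or state the conclusion in the form $I_{\es}\otimes_{\TL{N+2}}M$.
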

\begin{proof}
 We  compute the composition $\funGl_N\circ \funLoc_N$ as
\begin{equation}\label{GL-M}
\funGl_N\circ \funLoc_N:\qquad M\; \mapsto \; \TL{N+2}\e \otimes_{\TL{N}} \bigl(\e \TL{N+2} \otimes_{\TL{N+2}} M\bigr)\ ,
\end{equation}
where we rewrote $M$ as $\TL{N+2} \otimes_{\TL{N+2}} M$. Then, we use the associativity of the tensor product over algebras and rewrite the expression in~\eqref{GL-M} as
\begin{equation}\label{GL-M-2}
\funGl_N\circ \funLoc_N(M) = \bigl(\TL{N+2}\e \otimes_{\TL{N}} \e \TL{N+2}\bigr) \otimes_{\TL{N+2}} M\ ,
\end{equation}
 where recall that the tensor product is over $\TL{N}=\e\TL{N+2}\e$. We also note the isomorphism
$$
\TL{N+2}\e \otimes_{\TL{N}} \e \TL{N+2} \xrightarrow{\quad \cong \quad} \TL{N+2} \cdot \e \cdot \TL{N+2} = I_\es
$$ 
of the $\TL{N+2}$ bimodules (by the left and right multiplication) given by
\begin{equation}\label{iso:aeb}
a \e \otimes \e b \; \mapsto\; a \e b\ , \quad \text{for}\quad a,b\in\TL{N+2}\ .
\end{equation}
The inverse to this map can be constructed as follows: any element in $I_\es$ can be presented as $a\e b$ (though not in a unique way, one can rewrite $a\e b = a'\e d$ with $a'=a\e c$ if $b=c\e d$); take any of these representatives and map $a\e b \mapsto a\e\otimes \e b$. This map is well-defined, i.e., does not depend on the representative because the tensor product is over $\e\TL{N+2}\e$, and obviously inverse to the map~\eqref{iso:aeb}.
Finally, using~\eqref{GL-M-2} together with the isomorphism in~\eqref{iso:aeb} we obtain an isomorphism of vector spaces
$$
\funGl_N\circ \funLoc_N(M)  \xrightarrow{\quad \cong \quad} I_{\es}  \otimes_{\TL{N+2}} M = I_\es \cdot M.
$$
Note also that the $\TL{N+2}$ actions are equal on both of sides -- they are simply given by the multiplication, so the modules are actually equal and not just isomorphic.
\end{proof}

\begin{rem}\label{rem:Ie-span}
Note that the ideal $I_{\es}$  generated by $\e$ in $\TL{N+2}$ is spanned by all TL diagrams except the unit $\one$. This is easy to see in terms of the generators of the subalgebra~$I_\es$: all $e_i$'s are in $I_{\es}$.
\end{rem}

\subsection{The direct limit $\catTLinf$}

We use the globalisation functors $\funGl_N$, for $N\geq 1$, in constructing certain direct systems of the TL representation categories $\catTL_N$ and eventually their  direct limits. 
By a \textit{direct (or inductive) system} of categories we mean a pair $\{C_i,F_{ij}\}$ of a family of categories $C_i$ indexed by an ordered set~$I$ and a family of functors $F_{ij}: C_i\rightarrow C_j$ for all $i\leq j$ satisfying the following properties: (i) $F_{ii}$ is the identity functor on $C_i$, and (ii) $F_{ik}$ is naturally isomorphic to $F_{jk} \circ F_{ij}$ for all $i\leq j\leq k$. The \textit{direct limit} 
\begin{equation}\label{eq:dir-lim}
C_{\infty}\equiv\varinjlim C_i
\end{equation}
 of the direct system $\{C_i, F_{ij}\}$ is defined as the disjoint union $\coprod_i C_i/\sim$\, modulo an equivalence relation:
two objects $V_i\in C_i$ and $V_j\in C_j$ in the disjoint union are equivalent if and only if
% they eventually become equal in the direct system, {\it i.e.}, $a_i \sim a_j$ if 
there is $k\in I$ such that $F_{ik}(V_i)=F_{jk}(V_j)$; and similarly for morphisms: two morphisms $f_i: V_i\to W_i$ and $f_j: V_j\to W_j$ are equivalent if the equality $F_{ik}(f_i)=F_{jk}(f_j)$ holds in $\mathrm{Hom}(F_{ik}(V_i),F_{ik}(W_i))$. We obtain from this definition canonical functors $F_{i\infty}: C_i\rightarrow C_{\infty}$ mapping each object to its equivalence class.
We will use these functors to define additional structures on $C_{\infty}$, such as tensor product etc. 
 
 \medskip

Recall that we introduced  the ``enveloping'' TL category
\begin{equation}
\catTL = \bigoplus_{N\geq 1} \catTL_{N}
\end{equation}
which means that $\catTL$ contains $\catTL_{N}$ as a full subcategory and there are no morphisms between the full subcategories for different $N$ (it is also known as the disjoint union of $\catTL_N$'s). The category $\catTL$ is thus graded by $\oN$. We will label  an object $M$ from $\catTL_N$ as $M[N]$ to emphasize its grade.

Inside $\catTL$, we consider two direct systems (recall the definition above, where $C_i=\catTL_i$ and $F_{ij}=\funGl_{j-2}\circ\ldots \funGl_{i+2}\circ\funGl_{i}$, with $i\leq j$)
$$
\catTL_1 \xrightarrow{\;\; \funGl_1 \;\;} \catTL_3  \xrightarrow{\;\; \funGl_3 \;\;} \ldots
$$
and 
$$
\catTL_2 \xrightarrow{\;\; \funGl_2 \;\;} \catTL_4  \xrightarrow{\;\; \funGl_4 \;\;} \ldots\ .
$$
We denote the corresponding direct limits as 
\begin{equation}\label{catTLinf-0}
\catTLodd = \varinjlim \catTL_{\mathrm{odd}}
\qquad
\text{ and } 
\qquad
 \catTLev = \varinjlim \catTL_{\mathrm{even}}\ .
 \end{equation}
Then, we define the category
\begin{equation}\label{catTLinf}
\catTLinf = \catTLev \oplus \catTLodd.
\end{equation}
Note that by the construction the category $\catTLinf$ is an abelian $\oC$-linear category for any non-zero value of $\q$, i.e., including the roots of unity cases (except $\q=\pm\rmi$ where our construction is not defined).

\newcommand{\el}{\,\tilde{\mathsf{e}}_{(N)}}
\newcommand{\psil}{\tilde{\psi}}
\begin{rem}\label{rem:funGll}
We note that \textit{left arc-embeddings} can be introduced similarly enlarging diagrams on the left with the pair of the arcs:   introduce a new idempotent $\el$ in $\TL{N+2}$ as
 \begin{equation}\label{el-def}
 \el=\ffrac{1}{m} \,e_{1} \ \in \ \TL{N+2}
 \end{equation}
and define  \textit{left arc-embeddings}:
$$
\TL{N} \xrightarrow{\quad \psil \quad} \TL{N+2}
$$
with 
 $$
 \psil(e_j) = \el e_{j+2} \el \ .
 $$
 
 We have also  new localization $\funLocl_N$ and globalization  $\funGll_N$ functors corresponding to the new idempotent subalgebra $\TL{N}\cong \el\TL{N+2}\el$: they are defined similarly to $\funLoc_N$ and $\funGl_N$ as in~\eqref{loc-fun} and~\eqref{glob-fun}, respectively. It is clear that $\funLocl_N$ and $\funLoc_N$ are naturally isomorphic, as well as their adjoint functors $\funGll_N$ and $\funGl_N$. (The isomorphism can be explicitly stated in terms of the TL diagrams.)
\end{rem}

The interesting property of the localization functors $\funLoc_N$ is that
they map the standard (resp., costandard) module of a weight $j$ on $N+2$ sites to the standard (resp., costandard) module of the same weight $j$ but on $N$ sites. We state this as follows.

\begin{prop}\label{prop:eW}
For any non-zero $\q$, the  $\TL{N}$-module $\e \StTL{j}[N+2]$ is equal to the standard module $\StTL{j}[N]$ for $j\leq N/2$ and 0 otherwise.
\end{prop}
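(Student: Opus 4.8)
The plan is to argue entirely with the link-state (half-diagram) bases, using that $\e=\frac{1}{m}e_{N+1}$ is an idempotent of $\TL{N+2}$ (well defined since $m\neq0$), so that $\e\StTL{j}[N+2]$ is precisely the image of the projector $\e$ acting on the standard module. First I would describe the action of $\e$ on a link state $v\in\StTL{j}[N+2]$, i.e.\ on a half-diagram on $N+2$ sites with exactly $2j$ through-lines, by stacking the arc of $e_{N+1}$ on top of $v$. Three cases arise: (a) if $v$ already joins the sites $N+1$ and $N+2$ by an arc, a contractible loop is produced (factor $m$) and $\e v=v$; (b) if both $N+1$ and $N+2$ carry through-lines in $v$, then $e_{N+1}v$ has $2j-2$ defects, so $\e v=0$ by the defining convention for standard modules; (c) in every other configuration (one of $N+1,N+2$ a through-line and the other an arc endpoint, or both arc endpoints) the strands at $N+1,N+2$ are simply reconnected, no defect is lost, and $\e v$ is a scalar multiple of a link state that joins $N+1$ to $N+2$ by an arc.

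Hence $\e\StTL{j}[N+2]$ is the span of the set $B_j$ of link states on $N+2$ sites with $2j$ defects in which $N+1$ and $N+2$ are joined by an arc, and $\e$ acts as the identity on this span, so $B_j$ is a basis of it. Deleting the arc on $\{N+1,N+2\}$ gives a bijection of $B_j$ with the set of link states on $N$ sites with $2j$ defects, that is, with the standard basis of $\StTL{j}[N]$; in particular the dimensions match through \eqref{dj}. Moreover, if $j>N/2$ then necessarily $2j=N+2$, so every site of $v$ is a through-line, every $\e v$ falls into case (b), and $\e\StTL{j}[N+2]=0$, giving the ``$0$ otherwise'' part.

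It then remains to check that this identification of bases intertwines the $\TL{N}$-module structures, where $\TL{N}\cong\e\TL{N+2}\e$ acts on $\e\StTL{j}[N+2]$ via $e_j\mapsto\psi(e_j)=\e e_j\e$. For $v\in B_j$ and $1\le j\le N-1$ one has $\e v=v$ and $e_je_{N+1}=e_{N+1}e_j$, so $\psi(e_j)\,v=\e e_j\e v=\e e_j v=e_j v$; since $e_j$ only touches the sites $j,j+1\le N$, it acts on the first $N$ sites of $v$ exactly as the $\TL{N}$-generator $e_j$ does on the corresponding link state of $\StTL{j}[N]$, with the same contractible-loop factors, and any drop of the defect number below $2j$ sets the result to zero on both sides simultaneously. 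Thus the two $\TL{N}$-modules coincide under the identification (they are literally equal once the link-state bases are identified, not merely isomorphic), which is the claim. The one point that needs a little care is the reconnection case analysis of the first step together with the book-keeping of the ``fewer defects $\Rightarrow$ zero'' rule on the two sides; this is elementary diagrammatics, and I do not expect a genuine obstacle.
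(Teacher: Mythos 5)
Your proof is correct and follows essentially the same route as the paper's own elementary argument: a case analysis of the action of $\e=\frac{1}{m}e_{N+1}$ on link states (both last sites through-lines $\Rightarrow 0$; otherwise a reconnection preserving the defect number and landing on a link state with an arc at $\{N{+}1,N{+}2\}$), followed by the observation that deleting that arc identifies the basis with that of $\StTL{j}[N]$ compatibly with the $\TL{N}\cong\e\TL{N+2}\e$ action. The paper additionally notes that the statement follows abstractly from the quasi-hereditary structure of $\TL{N+2}$ and the fact that $\e$ lies in a heredity chain, but its main proof is the same diagrammatic one you give; your explicit treatment of the $j>N/2$ case and of the intertwining of the module structures is, if anything, slightly more complete.
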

\begin{proof}
We recall that $\TL{N+2}$ is a quasi-hereditary algebra and the idempotent $\e$ belongs to a heredity chain of $\TL{N+2}$, therefore $\TL{N} = \e \TL{N+2}\e$ is also a quasi-hereditary algebra. The proof of the proposition is then standard (in the theory of quasi-hereditary algebras) and follows for example from \cite[Sec. 3.3 and Prop. 3.4]{MW}.

It is also possible to prove the result by elementary means. First, by adding an arc to the right of any link diagram in $\StTL{j}[N]$, it is clear that we obtain a link diagram in $\e \StTL{j}[N+2]$. Let us call this map by  $\rho: \StTL{j}[N]\to\e \StTL{j}[N+2]$, its kernel is obviously trivial. Conversely, we start from a link diagram in $\StTL{j}[N+2]$. If the last two points are both occupied by through-lines, the action of the idempotent $\e$ is zero. If none of the last two points are occupied by through-lines, the action of $\e$ simply produces, apart from the arc between the $N+1^{th}$ and $N+2^{th}$ points, a new connection between two points among the first $N$ ones, resulting in  a link diagram in $\StTL{j}[N]$ after this arc is removed. If only one point is occupied by a through-line, this point is necessarily the $N+2^{th}$ one. The action of $\e$ then moves this through-line to a new position -- the point to which the $N+1^{th}$ point was connected by an arc, resulting again, apart from the arc between the $N+1^{th}$ and $N+2^{th}$ points, into a  link diagram  with the same number of through-lines, that is a link diagram in $\StTL{j}[N]$. 
We have thus shown by the first map $\rho$ that $\StTL{j}[N]\subset \e \StTL{j}[N+2]$ and the inclusion $\e \StTL{j}[N+2]\subset \StTL{j}[N]$ by the second map. Hence, we have  a bijection. Also, both the maps are obviously  module maps and we have thus an isomorphism of the $\TL{N}$ modules.    Moreover, the first map $\rho$  is  the identity map
 -- adding the arc on the right is only a convention in terms of the diagrammatical bases in the two spaces -- and its inverse is the identity as well. We have thus shown that $\funLoc_N$ sends the standard module $\StTL{j}[N+2]$ to~$\StTL{j}[N]$.
\end{proof}

As a consequence, we obtain the important property of the globalisation functors:

\begin{prop}\label{prop:funGl-W}
For $x\leq j\leq \frac{N}{2}$ and  $x = \half(N\,\mathrm{mod}\, 2)$, we have  
\begin{equation}\label{eq:funGl-St}
\funGl_N: \qquad \StTL{j}[N]\; \mapsto \; \StTL{j}[N+2]\ .
\end{equation}
\end{prop}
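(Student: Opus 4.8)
The plan is to read off the statement from the two structural results already in hand. By Prop.~\ref{prop:eW} we have $\funLoc_N(\StTL{j}[N+2]) = \e\,\StTL{j}[N+2] = \StTL{j}[N]$ for $x\le j\le N/2$, and by Prop.~\ref{prop:Ie} the composite $\funGl_N\circ\funLoc_N$ sends a $\TL{N+2}$-module $M$ to $I_\es\cdot M$, where $I_\es=\langle\e\rangle$ is the two-sided ideal generated by $\e$. Applying $\funGl_N$ to the first equality and using the second therefore gives
\begin{equation*}
\funGl_N\bigl(\StTL{j}[N]\bigr) \;=\; \funGl_N\circ\funLoc_N\bigl(\StTL{j}[N+2]\bigr) \;=\; I_\es\cdot\StTL{j}[N+2],
\end{equation*}
and since both cited propositions are stated as equalities of modules (not merely isomorphisms), it suffices to prove that $I_\es\cdot\StTL{j}[N+2]=\StTL{j}[N+2]$ whenever $x\le j\le N/2$.

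First I would note, following Rem.~\ref{rem:Ie-span}, that $I_\es$ contains every generator $e_i$ of $\TL{N+2}$ with $1\le i\le N+1$: one has $e_{N+1}=m\e\in I_\es$, and then $e_{k}=e_{k}e_{k+1}e_{k}\in I_\es$ by downward induction on $k$. Hence it is enough to show $\sum_{i=1}^{N+1} e_i\,\StTL{j}[N+2]=\StTL{j}[N+2]$, and I would do this on the link-state basis. When $j\le N/2$ we have $2j<N+2$, so each basis link state $v$ of $\StTL{j}[N+2]$ carries at least one arc; choosing an arc of minimal span and using that no through-line can lie strictly between its endpoints in a non-crossing configuration, one sees that $v$ has an arc joining two consecutive sites $i,i+1$. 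For such a $v$ the diagrammatic action gives $e_i v=m\,v$ (stacking $e_i$ onto $v$ closes exactly one loop, contributing the factor $m$, and reproduces $v$), so $v=\tfrac{1}{m} e_i v\in I_\es\cdot\StTL{j}[N+2]$. As these link states span $\StTL{j}[N+2]$ in the range considered, the desired equality follows, and with it the proposition.

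The one genuinely substantive point is this last surjectivity, and in particular why the hypothesis $j\le N/2$ cannot be dropped: for $j=(N+2)/2$ the (unique) link state has no arc, every $e_i$ annihilates it, and $I_\es$ acts by zero, so $\funGl_N$ does not preserve that top standard module. Conceptually, $\StTL{j}[N+2]/I_\es\StTL{j}[N+2]$ is a module over $\TL{N+2}/I_\es\cong\fk$ (all $e_i\mapsto 0$), hence a sum of copies of the trivial module $\IrTL{(N+2)/2}[N+2]$; as a standard module has simple head $\IrTL{j}[N+2]$, this quotient is non-zero precisely for $j=(N+2)/2$. This gives an alternative, less computational route to the same reduction, but the link-state argument is the most elementary and stays within the framework of Sec.~\ref{sec:CN}.
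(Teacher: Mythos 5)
Your argument is correct and is essentially the paper's own proof: both reduce the claim, via Prop.~\ref{prop:eW} and Prop.~\ref{prop:Ie}, to the identity $\funGl_N\bigl(\StTL{j}[N]\bigr)=\funGl_N\circ\funLoc_N\bigl(\StTL{j}[N+2]\bigr)=I_{\es}\cdot\StTL{j}[N+2]$ and then to showing this equals $\StTL{j}[N+2]$ for $x\leq j\leq N/2$. The only difference is that you make explicit, through the link-state computation $e_i v = m v$ on a minimal arc, the surjectivity step that the paper disposes of by citing Rem.~\ref{rem:Ie-span}; this is a welcome clarification but not a different route.
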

\begin{proof}
We first compute the composition $\funGl_N\circ \funLoc_N$ on the module $\StTL{j}[N+2]$ using Prop.~\ref{prop:Ie} and obtain
\begin{equation}
\funGl_N\circ \funLoc_N:\quad \StTL{j}[N+2]\; \mapsto\;  I_{\es} \cdot \StTL{j}[N+2]\ ,
\end{equation}
which is $\StTL{j}[N+2]$ for $x\leq j\leq \frac{N}{2}$ and zero otherwise, see Rem.~\ref{rem:Ie-span}.
Then, recall that  the composition $\funLoc_N\circ \funGl_N$ is naturally isomorphic to the identity functor on $\catTL_N$.
Together with Prop.~\ref{prop:eW}, we then obtain~\eqref{eq:funGl-St}.
\end{proof}

%, see Prop.~\ref{prop:eW} below.
% and Prop.~\ref{prop:eWz} below. 

The idea is then to study the direct limit of the standard modules (their subquotient structure)  and their fusion rules in the direct limit category. 
%It requires studying more properties of the localisation functors.
So, our next step is to use the bilinear $\oN$-graded tensor product $\fus$ on $\catTL$, recall Prop.~\ref{prop:N-graded}, and to show that it defines a monoidal structure on $\catTLinf$, with braiding. We start by exploring an associator for $\fus$ on $\catTL$ and prove  the statement about associativity in Prop.~\ref{prop:N-graded}.

\subsection{Associativity of TL fusion}\label{sec:ass-TL}
The important property of the fusion tensor-product  $\fus$ introduced in~\eqref{fusfunc-TL-def} is the associativity. 

\begin{prop}\label{prop:ass}
Let $M_1$, $M_2$ and $M_3$ be three modules over $\TL{N_1}(m)$, $\TL{N_2}(m)$ and $\TL{N_3}(m)$, respectively.
The tensor product~$\fus$ is equipped with an associator, {\it i.e.,} we have a family $\ass_{M_1,M_2,M_3}$ of natural isomorphisms of $\TL{N_1+N_2+N_3}(m)$ modules
\begin{equation}\label{fusfunc-TL-ass}
\ass_{M_1,M_2,M_3}:\quad \bigl(M_1\fus M_2\bigr) \fus M_3 \xrightarrow{\quad \cong\quad}
M_1\fus \bigl( M_2 \fus M_3\bigr) 
\end{equation}
given explicitly as, for any triple of vectors $m_i\in M_i$, with $i=1,2,3$, 
\begin{equation}\label{TL-ass-map}
\ass_{M_1,M_2,M_3}:\quad a\otimes(b\otimes m_1\otimes m_2)\otimes m_3 \mapsto a\cdot b\otimes \bigl(m_1\otimes (\one_{2+3}\otimes m_2\otimes m_3)\bigr)\ ,
\end{equation}
where $a\in\TL{N_1+N_2+N_3}(m)$, $b\in\TL{N_1+N_2}(m)$, and $\one_{2+3}$ is the identity in $\TL{N_2+N_3}$, and $a\cdot b$ stands for the product of
the element  $a$ with the image of $b$ under the standard embedding of $\TL{N_1+N_2}$ into $\TL{N_1+N_2+N_3}$.
\end{prop}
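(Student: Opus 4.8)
The plan is to prove the proposition by identifying both sides of~\eqref{fusfunc-TL-ass} with a single ``triple induction'' module and then reading off the explicit formula~\eqref{TL-ass-map} from the resulting composite. The only input beyond elementary algebra is the transitivity of induction, so the argument goes through verbatim for every $\q\in\oC^{\times}$, including roots of unity, and uses no semisimplicity.

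First I would record the two elementary facts about induction that drive everything. (i) For a chain of associative algebras $C\subseteq B\subseteq A$ and a left $C$-module $V$ there is a canonical isomorphism of left $A$-modules $A\tensor_B(B\tensor_C V)\xrightarrow{\cong}A\tensor_C V$, $a\otimes(b\otimes v)\mapsto(ab)\otimes v$, with inverse $a\otimes v\mapsto a\otimes(\one_B\otimes v)$; this is immediate from $A\tensor_B B\cong A$ and associativity of the balanced tensor product. (ii) For $C\subseteq A'$, a left $C$-module $V$ and a left $D$-module $W$, there is a canonical isomorphism $(A'\tensor_C V)\otimes W\xrightarrow{\cong}(A'\otimes D)\tensor_{C\otimes D}(V\otimes W)$, $(a'\otimes v)\otimes w\mapsto(a'\otimes\one_D)\otimes(v\otimes w)$, together with its mirror version obtained by swapping the two tensor factors. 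I apply (i) with $A=\TL{N_1+N_2+N_3}(m)$ throughout.

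Next I would record the compatibility of the standard embeddings. Write $\TL{N_1}\tensor\TL{N_2}\tensor\TL{N_3}\hookrightarrow\TL{N_1+N_2+N_3}$ for the ``triple'' standard embedding, obtained by juxtaposing blocks of $N_1$, $N_2$ and $N_3$ vertical strings. One checks on the diagram basis that this embedding factors in exactly two ways, through $\bigl(\TL{N_1}\tensor\TL{N_2}\bigr)\tensor\TL{N_3}$ and through $\TL{N_1}\tensor\bigl(\TL{N_2}\tensor\TL{N_3}\bigr)$, where in either case the first arrow is a standard embedding of a two-fold product of TL algebras into a single one (tensored with an identity) and the second is the standard embedding used in Def.~\ref{defRSfusion}; both composites agree \emph{on the nose}, because juxtaposition of TL diagrams is associative. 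Granting this, apply (ii) and then (i) along the first factorization to obtain a canonical isomorphism $\bigl(M_1\fus M_2\bigr)\fus M_3\xrightarrow{\cong}\TL{N_1+N_2+N_3}\tensor_{\TL{N_1}\tensor\TL{N_2}\tensor\TL{N_3}}\bigl(M_1\tensor M_2\tensor M_3\bigr)$, and apply the same recipe along the second factorization to identify this triple induction canonically with $M_1\fus\bigl(M_2\fus M_3\bigr)$. Then $\ass_{M_1,M_2,M_3}$ is defined as the composite of the first isomorphism with the inverse of the second, and it is natural in each $M_i$ since every map used is assembled from structure maps of balanced tensor products and from algebra multiplications.

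Finally I would verify the closed form~\eqref{TL-ass-map} by tracing the generator $a\otimes(b\otimes m_1\otimes m_2)\otimes m_3$: step (ii) turns it into $a\otimes\bigl((b\otimes\one_{N_3})\otimes m_1\otimes m_2\otimes m_3\bigr)$, step (i) absorbs $b$ — viewed via the standard embedding $\TL{N_1+N_2}\hookrightarrow\TL{N_1+N_2+N_3}$ — into $a$, giving $(a\cdot b)\otimes m_1\otimes m_2\otimes m_3$ in the triple induction, and the inverse of the second isomorphism lifts $m_2\otimes m_3$ back to $\one_{2+3}\otimes m_2\otimes m_3$, producing exactly $a\cdot b\otimes\bigl(m_1\otimes(\one_{2+3}\otimes m_2\otimes m_3)\bigr)$. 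This simultaneously shows that~\eqref{TL-ass-map} is well defined over the relevant balanced tensor products and is $\TL{N_1+N_2+N_3}$-linear. The main (and essentially the only) obstacle is the bookkeeping in the compatibility step — checking that the two iterated-subalgebra presentations of $\TL{N_1}\tensor\TL{N_2}\tensor\TL{N_3}$ inside $\TL{N_1+N_2+N_3}$ coincide exactly, not merely up to an inner automorphism; once that is settled, everything else is a formal consequence of transitivity of induction. The pentagon identity for $\ass$ is handled separately in Prop.~\ref{prop:ass-pentagon}.
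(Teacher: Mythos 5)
Your proposal is correct and follows essentially the same route as the paper: both sides are identified with the single triple induction $\TL{N_1+N_2+N_3}\tensor_{\TL{N_1}\tensor\TL{N_2}\tensor\TL{N_3}}(M_1\tensor M_2\tensor M_3)$ via transitivity of induction, the associator is the composite of one identification with the inverse of the other, and the closed form \eqref{TL-ass-map} is read off by tracing a generator. Your explicit remark that the two iterated embeddings of $\TL{N_1}\tensor\TL{N_2}\tensor\TL{N_3}$ coincide on the nose is a point the paper only makes explicit later, in the affine analogue (Prop.~\ref{prop:ass-ATL}), where it is no longer trivial.
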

\begin{proof}
To prove that the map $\ass_{M_1,M_2,M_3}$  in~\eqref{fusfunc-TL-ass} is an isomorphism, we show that it is a composition of two isomorphisms: $\liso$ and the inverse to $\riso$, where
$\liso$ is defined as the composition of the isomorphisms
% is an isomorphism of the left-hand side of~\eqref{fusfunc-TL-ass} to 
%Indeed, we rewrite the left-hand side as
\begin{multline}\label{ass:3lines-l}
\liso:\quad \bigl(M_1\fus M_2\bigr) \fus M_3  = \TL{1+2+3}\otimes_{\TL{1+2}\otimes\TL3} \left[\Bigl(\TL{1+2}\otimes_{\TL1\otimes\TL2} M_1\otimes M_2\Bigr)\otimes M_3\right]\\
\xrightarrow{\quad \cong\quad} \TL{1+2+3}\otimes_{\TL{1+2}\otimes\TL3} \left[\Bigl(\TL{1+2}\otimes \TL3\otimes_{\TL1\otimes\TL2\otimes\TL3} (M_1\otimes M_2\otimes M_3)\Bigr)\right]\\
\xrightarrow{\quad \cong\quad} \TL{1+2+3}\otimes_{\TL1\otimes\TL2\otimes\TL3}  (M_1\otimes M_2\otimes M_3),
\end{multline}
where we use the short-hand notations $\TL{i}\equiv\TL{N_i}(m)$, $\TL{i+j}\equiv\TL{N_i+N_j}(m)$, etc.; the first line is by definition, in the second line we used that $M_3 \cong \TL3\otimes_{\TL3}M_3$ and an obvious rearrangement of the tensor factors, to establish an isomorphism in the third line we used the associativity of the tensor product over rings --- the final result is obvious then.
In the third line, we consider the tensor product $M_1\tensor M_2\tensor M_3$ as the module over
 the algebra $\TL{1}\tensor\TL{2}\tensor\TL{3}$ which is considered as a subalgebra in $\TL{1+2+3}$, by the embedding~\eqref{aTL-embed}. 
 Explicitly, the isomorphism $\liso$ is given by
 \begin{equation}\label{eq:liso-def}
 \liso:\quad a\otimes(b\otimes m_1\otimes m_2)\otimes m_3 \mapsto a\cdot b\otimes (m_1 \otimes m_2\otimes m_3)\ .
 \end{equation}
 Similarly, we introduce the isomorphism $\riso$
of the right-hand side of~\eqref{fusfunc-TL-ass} 
%can be rewritten in a similar way (using the identity $M_1 = \TL1\otimes_{\TL1}M_1$, etc.) and it is isomorphic
 to the third line in~\eqref{ass:3lines-l}:
  \begin{equation}\label{eq:riso-def}
 \riso:\quad a\otimes\bigl(m_1\otimes (c \otimes  m_2 \otimes m_3)\bigr) \mapsto a\cdot c\otimes (m_1 \otimes m_2\otimes m_3)\ ,
 \end{equation} 
where $c\in\TL{N_2+N_3}(m)$. We have thus the inverse to $\riso$, and the map $\ass_{M_1,M_2,M_3}$ is defined as the composition $\riso^{-1}\circ \liso$ with the final result in~\eqref{TL-ass-map}.
 
 The naturality of $\ass_{M_1,M_2,M_3}$ is obvious.
 This finishes our proof of the proposition.
\end{proof}

Note that $\fus$ defines the tensor product also for morphisms: for two morphisms in $\catTL_N$ $f:M\to M'$ and $g:K\to K'$,  we say $f\fus g: M\fus K\to M'\fus K'$ for $\id_{\TL{N}}\otimes f\otimes g$ where $\otimes$ is in the category of vector spaces.

\newcommand{\fuss}{\times}
\begin{prop}\label{prop:ass-pentagon}
The family $\ass_{M_1,M_2,M_3}$ of natural isomorphisms of $\TL{N_1+N_2+N_3}$-modules from \eqref{fusfunc-TL-ass} satisfies the pentagon identity
\begin{multline}\label{eq:pentagon}
   \ass_{M_1, M_2, M_3\fus M_4}  \circ \ass_{M_1\fus M_2, M_3, M_4}\\
   =  (\id_{M_1} \fus \ass_{M_2,M_3,M_4})  \circ \ass_{M_1,M_2\fus M_3, M_4} \circ  (\ass_{M_1,M_2,M_3} \fus \id_{M_4})
\end{multline}
 or, equivalently, the ``pentagon'' diagram
 $$
\xygraph{ !{0;/r4.5pc/:;/u4.5pc/::}[]*+{((M_1\fus M_2)\fus M_3)\fus M_4}(
  :[u(1.1)r(1.7)]*+{(M_1\fus M_2)\fus(M_3\fus M_4)} ^{\mbox{}\!\!\!\!\!\!\!\!\!\!\!\!\!\!\!\!\!\ass_{M_1\fuss M_2,M_3, M_4}}
  :[d(1.1)r(1.7)]*+{M_1\fus(M_2\fus(M_3\fus M_4))} ="r" ^{\;\ass_{M_1, M_2, M_3 \fuss M_4}}
  ,
  :[r(.6)d(1.5)]*+!R(.7){(M_1\fus(M_2\fus M_3))\fus M_4} _{\ass_{M_1,M_2,M_3}\fus \id_{M_4}}
  :[r(2.2)]*+!L(0.7){M_1\fus((M_2\fus M_3)\fus M_4)} ^{\mbox{}\quad\ass_{M_1,M_2\fuss M_3,M_4}\quad}
  : "r" _{\id_{M_1}\fus\, \ass_{M_2,M_3,M_4}}
)
}
$$
 commutes (we use here the abbreviation $\fuss$ instead of $\fus$ in the indices of $\ass$). 
\end{prop}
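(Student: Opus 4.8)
The plan is to verify the pentagon identity by tracking the image of a generic decomposable tensor $a\otimes(b\otimes(c\otimes m_1\otimes m_2)\otimes m_3)\otimes m_4$ through both sides of~\eqref{eq:pentagon}, using the explicit formula~\eqref{TL-ass-map} for the associator. Since all the maps involved are built, via Prop.~\ref{prop:ass}, from the canonical isomorphisms $\liso$ and $\riso^{-1}$ of balanced tensor products, the cleanest approach is to factor every edge of the pentagon through the ``fully collapsed'' module $\TL{N_1+N_2+N_3+N_4}\otimes_{\TL{N_1}\otimes\TL{N_2}\otimes\TL{N_3}\otimes\TL{N_4}}(M_1\otimes M_2\otimes M_3\otimes M_4)$, in analogy with the three-fold collapse~\eqref{ass:3lines-l} used in the proof of Prop.~\ref{prop:ass}. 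Concretely, I would introduce a four-fold version of $\liso$ sending a nested tensor on any of the five vertices of the pentagon to $w\otimes(m_1\otimes m_2\otimes m_3\otimes m_4)$, where $w$ is the appropriate product in $\TL{N_1+N_2+N_3+N_4}$ of the ``big'' element with the images of the intermediate TL elements under the standard embeddings.

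First I would record the key associativity/compatibility lemma for balanced tensor products: if $C_1\subset C_2\subset C_3$ is a chain of (images of) subalgebras and $M$ is a $C_1$-module, then $C_3\otimes_{C_2}(C_2\otimes_{C_1}M)\cong C_3\otimes_{C_1}M$ naturally, and moreover these isomorphisms compose coherently for longer chains — this is exactly what was invoked line-by-line in~\eqref{ass:3lines-l}. The point is that on elements all of these collapse maps are ``multiply out the algebra factors and keep the module vectors'', so the composite collapse map from any vertex of the pentagon down to the fully-collapsed module is given by a completely explicit formula. Then each of the five associator edges $\ass_{\bullet}$ is, by construction $\riso^{-1}\circ\liso$, equal to (one collapse map) followed by (the inverse of another collapse map); and the functoriality pieces $\id\fus\ass$, $\ass\fus\id$ act by applying the corresponding collapse-map identity inside one tensor slot. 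So traversing either path around the pentagon amounts to composing collapse maps and their inverses, and both composites equal the single total collapse from the top-left vertex to $M_1\fus(M_2\fus(M_3\fus M_4))$. The pentagon therefore reduces to the statement that this total collapse is well-defined, i.e. independent of the order in which the algebra factors were multiplied out — which follows from associativity of multiplication in $\TL{N_1+N_2+N_3+N_4}$ and compatibility of the standard embeddings $\TL{N_i+\cdots+N_j}\hookrightarrow\TL{N_1+\cdots+N_4}$ (the standard embedding is "add vertical strings on the right", and adding strings is manifestly associative/transitive).

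Carrying this out, the key steps in order are: (1) state and prove (or cite from the proof of Prop.~\ref{prop:ass}) the coherent-collapse lemma for nested balanced tensor products over a chain of subalgebras; (2) spell out, for each of the three kinds of edge appearing in~\eqref{eq:pentagon} — a bare $\ass$, an $\id\fus\ass$, and an $\ass\fus\id$ — its action on a general nested element in terms of collapse maps, using~\eqref{eq:liso-def}–\eqref{eq:riso-def}; (3) compose along the two sides of the pentagon and check that in both cases the element $a\otimes(b\otimes(c\otimes m_1\otimes m_2)\otimes m_3)\otimes m_4$ is sent to $a\cdot b\cdot c\otimes\bigl(m_1\otimes(\one_{2+3+4}\otimes m_2\otimes(\one_{3+4}\otimes m_3\otimes m_4))\bigr)$ in $M_1\fus(M_2\fus(M_3\fus M_4))$ — here all products and units are understood via the standard embeddings — using transitivity of the standard embeddings to identify the two resulting expressions; (4) note naturality is automatic since each constituent map is natural. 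The main obstacle, and the only place real care is needed, is bookkeeping in step (3): one must be scrupulous about which $\TL{N}$ the elements $a,b,c$ and the various identities live in and under which standard embedding each is being pushed forward, because the two paths around the pentagon associate these embeddings differently; the content that makes the two sides agree is precisely that $\TL{N_1+N_2}\hookrightarrow\TL{N_1+N_2+N_3}\hookrightarrow\TL{N_1+N_2+N_3+N_4}$ composes to the direct embedding $\TL{N_1+N_2}\hookrightarrow\TL{N_1+N_2+N_3+N_4}$, and similarly for the $\TL{N_2+N_3}$-type factors, so that after collapsing, both composites literally produce the same word in $\TL{N_1+N_2+N_3+N_4}$ acting on $m_1\otimes m_2\otimes m_3\otimes m_4$. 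No root-of-unity subtleties enter, since this argument is purely about induction functors and holds for any value of $m$.
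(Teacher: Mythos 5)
Your proposal is correct and is essentially the paper's argument: the paper likewise verifies the identity on a representative $a\otimes(b\otimes(c\otimes m_1\otimes m_2)\otimes m_3)\otimes m_4$, chasing it along both sides of~\eqref{eq:pentagon} via the explicit formula~\eqref{TL-ass-map} and landing in both cases on $a\cdot b\cdot c\otimes m_1\otimes\bigl(\one_{2+3+4}\otimes m_2\otimes(\one_{3+4}\otimes m_3\otimes m_4)\bigr)$, which is precisely your step (3). Your packaging through the fully collapsed four-fold induced module is only a mild reorganization of the same computation (the paper's associator is already defined as $\riso^{-1}\circ\liso$ through that kind of collapse), resting on the same two facts you identify: associativity of multiplication in $\TL{N_1+N_2+N_3+N_4}$ and transitivity of the standard embeddings.
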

\begin{proof}
We denote an element from $((M_1\fus M_2)\fus M_3)\fus M_4$ as 
\begin{equation}\label{abc-vect}
a\otimes \bigl(b\otimes (c\otimes m_1\otimes m_2)\otimes m_3\bigr)\otimes m_4,
\end{equation}
for $m_i\in M_i$, $i=1,2,3,4$, and $a\in\TL{1+2+3+4}$, $b\in\TL{1+2+3}$, and $c\in\TL{1+2}$.
(It is clear which tensor products in~\eqref{abc-vect} are the balanced tensor products over the  TL subalgebras, so we do not indicate them explicitly.)
Using~\eqref{TL-ass-map}, we begin with calculating the left-hand side of~\eqref{eq:pentagon} applied to such a vector: 
\begin{multline}\label{pentagon-TL-left}
a\otimes \bigl(b\otimes (c\otimes m_1\otimes m_2)\otimes m_3\bigr)\otimes m_4\\
\mapsto
a\cdot b\otimes \bigl((c\otimes m_1\otimes m_2)\otimes (\one_{3+4}\otimes m_3\otimes m_4)\bigr)\\
\mapsto
a\cdot b\cdot c \otimes  m_1\otimes \bigl( \one_{2+3+4} \otimes m_2\otimes (\one_{3+4}\otimes m_3\otimes m_4)\bigr)\ .
\end{multline}
On the other hand, the right side of~\eqref{eq:pentagon} gives
\begin{multline}
a\otimes \bigl(b\otimes (c\otimes m_1\otimes m_2)\otimes m_3\bigr)\otimes m_4\\
\mapsto
a\otimes \bigl((b\cdot c) \otimes m_1\otimes (\one_{2+3}\otimes m_2\otimes m_3) \bigr) \otimes m_4\\
\mbox{}\qquad\qquad\qquad \mapsto
a\cdot b\cdot c \otimes  m_1\otimes \bigl( \one_{2+3+4} \otimes (\one_{2+3} \otimes m_2 \otimes m_3) \otimes m_4\bigr)\\
\mapsto
a\cdot b\cdot c \otimes  m_1\otimes \bigl( \one_{2+3+4} \otimes m_2 \otimes (\one_{3+4} \otimes m_3 \otimes m_4)\bigr)\ ,
\end{multline}
which equals the third line of~\eqref{pentagon-TL-left}, and so the pentagon diagram indeed commutes.
\end{proof}

We have thus proven that $\catTL$ is a semi-group category (note that we have no tensor unit because we do not have the grade zero $N=0$ subcategory.) Our next step is the introduction of braiding isomorphisms in $\catTL$.

 \newcommand{\br}{c}

\subsection{Braiding for TL fusion}\label{sec:braiding-TL}
Motivated by a construction in~\cite{ReadSaleur07-1},
we introduce the braiding in $\catTL$ as follows. Let $\g_{N_1,N_2}$ defines the following element in $\TL{N_1 + N_2}$
\begin{equation}
\g_{N_1, N_2} = \left(g_{N_2}^{-1}\ldots g_2^{-1}g_1^{-1}\right) \left(g_{N_2+1}^{-1}\ldots g_2^{-1}\right)\ldots \left(g_{N_2+N_1-1}^{-1}\ldots g_{N_1}^{-1}\right)
\end{equation}
which passes strings from the left over those from the right 
(here, $N_1=3$, $N_2=2$):

\begin{equation}\label{sigma-def}
 \begin{tikzpicture}
 \node[font=\large]  at (-1.5,-4) {\mbox{} $\g_{N_1,N_2}\;\;\equiv$ \mbox{}\qquad};
\braid[braid colour=black,strands=5,braid width=\brw,braid start={(-0.3,-0.88)}]  {\g_2^{-1}\g_1^{-1}\g_3^{-1}\g_2^{-1}\g_4^{-1}\g_3^{-1}}

\node[font=\large]  at (4.0,-4) {\mbox{}\qquad \(=\)};
%\end{tikzpicture}
%\qquad
 %\begin{tikzpicture}
\draw[thick] (5.5,-2.5) -- (6.2,-3.9);
\draw[thick] (5.75,-2.5) -- (6.3,-3.6);

\draw[thick](6.65,-4.6)--(7.1,-5.5);
\draw[thick] (6.80,-4.4) -- (7.35,-5.5);

\draw[thick] (5.5,-5.5) -- (7,-2.5);
	\draw[thick] (5.75,-5.5) -- (7.25,-2.5);
         \draw[thick] (6,-5.5) -- (7.5,-2.5);
\end{tikzpicture}
\end{equation}
 and each braid-crossing (or equivalently $g_i^{\pm1}$) stands for the linear combination~\eqref{hecke-gen}.
Note that the element $\g_{N_1, N_2}$ defines an automorphism on $\TL{N_1+N_2}$ by the conjugation $a\mapsto \g_{N_1, N_2}\cdot a \cdot \g_{N_1, N_2}^{-1}$ which maps the subalgebra $\TL{N_1}\otimes\TL{N_2}$ (under the standard embedding) to the subalgebra $\TL{N_2}\otimes\TL{N_1}$ as 
\begin{equation}\label{g-TL-TL}
a\otimes b \; \mapsto \;  \g_{N_1, N_2} \cdot (a\cdot b)\cdot \g_{N_1, N_2}^{-1} = b \otimes a\ ,\qquad a\in\TL{N_1},\; b\in\TL{N_2}\ ,
\end{equation}
where $a\cdot b$ stands for the multiplication of $a$ and $b$ which are considered as the elements  in $\TL{N_1+N_2}$ under the standard embedding, and the last equality is most easily computed in terms of the diagrams.
Of course, the conjugation on $\TL{N_1+N_2}$ is non-trivial, which is easily seen for the generator $e_{N_1}$.

 Because of this flip of the two subalgebras,  the action by $\g_{N_1, N_2}$ relates  the two inductions from modules over these two subalgebras.
This allows us to  define the family of braiding isomorphisms on the $(N_1,N_2)$ graded components of~$\catTL$ as
\begin{equation}\label{br-TL}
\br_{M_1,M_2}: \quad M_1[N_1]\fus M_2[N_2] \xrightarrow{\quad \cong \quad} M_2[N_2] \fus M_1[N_1]
\end{equation}
by the conjugation with $\g_{N_1,N_2}$:
\begin{equation}\label{br-TL-1}
a\otimes m_1\otimes m_2 \; \mapsto \; \g^{}_{N_1,N_2}\cdot a \cdot \g_{N_1,N_2}^{-1} \otimes m_2\otimes m_1\ ,
\end{equation}
where $a\in\TL{N_1+N_2}$,  and $m_1\in M_1[N_1]$,  $m_2\in M_2[N_2]$. 
Here, we write $a\otimes m_1\otimes m_2$ for a representative in the corresponding class in $M_1[N_1]\fus M_2[N_2]$.
We first check that the map~\eqref{br-TL-1} is well-defined, i.e., does not depend on a representative in the class. Indeed, assume that $m_1=b \cdot m'_1$ and $m_2 = c\cdot m'_2$ for some $b\in\TL{N_1}$, $c\in\TL{N_2}$ and  some $m'_1\in M_1[N_1]$,  $m'_2\in M_2[N_2]$, and let us compute~\eqref{br-TL-1} for the other representative (setting here $\g\equiv \g_{N_1,N_2}$)
\begin{multline}\label{br-TL-2}
a\cdot (b\cdot c)\otimes m'_1\otimes m'_2 \; \mapsto \; \g \cdot a b c \cdot \g^{-1} \otimes m'_2\otimes m'_1 
= \g \cdot a \cdot g^{-1} \cdot (\g \cdot bc \cdot g^{-1}) \otimes m'_2\otimes m'_1 \\
=   \g \cdot a \cdot g^{-1}\otimes (c\otimes b) \otimes m'_2\otimes m'_1
= \g \cdot a \cdot \g^{-1} \otimes m_2\otimes m_1  \ ,
\end{multline}
where we used~\eqref{g-TL-TL} for the second equality, and the first and third $\otimes$'s in the second line are over the subalgebra $\TL{N_2}\otimes\TL{N_1}$.
The final result in~\eqref{br-TL-2} agrees with~\eqref{br-TL-1}.

 We emphasize  that the $\TL{N_1+N_2}$ action on the left-hand side of~\eqref{br-TL-1} is given by the multiplication while an element $a'\in\TL{N_1+N_2}$ acts by multiplication with $\g^{}_{N_1,N_2}\cdot a' \cdot \g_{N_1,N_2}^{-1}$ on the right-hand side of~\eqref{br-TL-1}, by the definition of its module structure which is given by applying the automorphism on the algebra. 
This shows the intertwining property of the map $\br_{M_1,M_2}$. This map is obviously bijective. We have thus proven that $\br_{M_1,M_2}$ is an isomorphism in $\catTL$.
% from the fact that the conjugation by $\g_{N_1,N_2}$ defines an automorphism of the algebra.

Note finally that the family $\br_{M_1,M_2}$ satisfies the coherence (hexagon) conditions required for the braiding (we use conventions from  Kassel's book~\cite{Kassel}, see the hexagon conditions in eqs. (1.3)-(1.4) in Chapter 13.1.) 

\begin{prop}\label{prop:C-hexagon}
The family $\br_{M_1,M_2}$ of isomorphisms defined in~\eqref{br-TL}-\eqref{br-TL-1} satisfies the hexagon conditions:
\begin{equation}\label{eq:hexagon-1}
\ass_{M_2,M_3,M_1}\circ\br_{M_1,M_2\fus M_3}\circ\ass_{M_1,M_2,M_3} = (\id_{M_2}\fus \br_{M_1,M_3})\circ \ass_{M_2,M_1,M_3} \circ(\br_{M_1,M_2}\fus \id_{M_3})
\end{equation}
on the maps from $\bigl(M_1\fus M_2\bigr) \fus M_3$ to $M_2\fus\bigl(M_3\fus M_1\bigr)$ and
\begin{equation}\label{eq:hexagon-2}
\ass^{-1}_{M_3,M_1,M_2}\circ\br_{M_1\fus M_2, M_3}\circ\ass^{-1}_{M_1,M_2,M_3} =(\br_{M_1,M_3}\fus \id_{M_2}) \circ \ass^{-1}_{M_1,M_3,M_2} \circ (\id_{M_1}\fus \br_{M_2,M_3}) 
\end{equation}
on the maps from $M_1\fus \bigl(M_2 \fus M_3\bigr)$ to $\bigl(M_3\fus M_1\bigr)\fus M_2$.
\end{prop}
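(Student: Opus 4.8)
The plan is to verify both hexagon identities by direct computation on representatives, exactly in the style of the proof of Proposition~\ref{prop:ass-pentagon}: feed a generic element through each composite using the explicit formula~\eqref{TL-ass-map} for $\ass$ and~\eqref{br-TL-1} for $\br$, and compare. The only input needed beyond these formulas is a braid-group identity for the elements $\g_{N_1,N_2}$, which I would isolate first.

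First I would prove that, inside $\TL{N_1+N_2+N_3}$,
\begin{equation}\label{eq:g-hexagon}
\g_{N_1,\,N_2+N_3}\;=\;\iota^{\mathrm L}_{N_2}\!\bigl(\g_{N_1,N_3}\bigr)\cdot\iota^{\mathrm R}_{N_3}\!\bigl(\g_{N_1,N_2}\bigr),
\end{equation}
where $\iota^{\mathrm L}_{k}$ (resp.\ $\iota^{\mathrm R}_{k}$) is the standard embedding that adds $k$ vertical strings on the left (resp.\ on the right). Diagrammatically~\eqref{eq:g-hexagon} just says that pushing a block of $N_1$ strings over the concatenation of two blocks of sizes $N_2$ and $N_3$ can be done by first pushing it over the first block and then over the second; it follows at once from isotopy of braid diagrams, i.e.\ from repeated use of the braid relations~\eqref{br-rel}. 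The mirror identity $\g_{N_1+N_2,\,N_3}=\iota^{\mathrm R}_{N_2}(\g_{N_1,N_3})\cdot\iota^{\mathrm L}_{N_1}(\g_{N_2,N_3})$, obtained by pushing a concatenated block over a single block in two steps, would be proved the same way.

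Then I would run a generic vector $a\otimes(b\otimes m_1\otimes m_2)\otimes m_3$ of $\bigl(M_1\fus M_2\bigr)\fus M_3$ through the two composites in~\eqref{eq:hexagon-1}, applying~\eqref{TL-ass-map} and~\eqref{br-TL-1} step by step. On both sides the associators only right-multiply the algebra component by identity elements $\one_{\ast}$, so each composite collapses to an element of $M_2\fus\bigl(M_3\fus M_1\bigr)$ whose module part is $m_2\otimes(\one\otimes m_3\otimes m_1)$ and whose algebra part is a word in $a$, $\iota^{\mathrm R}_{N_3}(b)$ and (conjugates of) the $\g$'s --- the left composite producing $\g_{N_1,N_2+N_3}$ and the right one the pair $\g_{N_1,N_2}$, $\g_{N_1,N_3}$. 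Pushing the $\g$-factors that already lie in the subalgebra $\TL{N_2}\otimes\TL{N_3+N_1}$, and inside it in $\TL{N_3}\otimes\TL{N_1}$, across the respective balanced tensor products, and using the flip~\eqref{g-TL-TL} of $\g_{N_1,N_2}$ on $\TL{N_1}\otimes\TL{N_2}$, the equality of the two sides reduces exactly to~\eqref{eq:g-hexagon}. The second hexagon~\eqref{eq:hexagon-2} is handled identically, starting from a generic vector of $M_1\fus\bigl(M_2\fus M_3\bigr)$, using the explicit inverse associator $\ass^{-1}=\liso^{-1}\circ\riso$ from the proof of Proposition~\ref{prop:ass}, and reducing to the mirror braid identity; naturality of $\br$ and $\ass$ in the module arguments is immediate (it was checked for $\br$ around~\eqref{br-TL-2}), which disposes of the morphism variables.

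I expect the only real difficulty to be the bookkeeping rather than anything conceptual: one must keep careful track, at each step, of which standard embedding ($k$ strings on the left or on the right) is intended, and of the fact that the algebra components on the two sides of a hexagon need not be literally equal but only equal modulo the relations of the iterated balanced tensor products --- equivalently, modulo the twist by $\mathrm{Ad}_{\g_{N_1,N_2}}$ that is built into the definition of $\br$ in~\eqref{br-TL-1}. As in Proposition~\ref{prop:ass-pentagon}, drawing the relevant Temperley--Lieb-with-braids diagrams makes all of these manipulations transparent and is, in practice, the cleanest way to carry out the verification.
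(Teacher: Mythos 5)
Your proposal is correct and follows essentially the same route as the paper's proof: observe that the associators act as the identity on equivalence classes of representatives, and reduce each hexagon to the factorization identities $\g_{N_1,N_2+N_3}=(\one_{N_2}\otimes\g_{N_1,N_3})\cdot(\g_{N_1,N_2}\otimes\one_{N_3})$ and $\g_{N_1+N_2,N_3}=(\g_{N_1,N_3}\otimes\one_{N_2})\cdot(\one_{N_1}\otimes\g_{N_2,N_3})$, which are exactly the identities the paper invokes.
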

\begin{proof}
To show the equalities, we first note that the isomorphism $\ass_{M_1,M_2,M_3}$ from~\eqref{TL-ass-map} maps a representative to another representative in the same equivalence class corresponding to $a\cdot b\otimes m_1\otimes m_2\otimes m_3$. Therefore, the map~\eqref{fusfunc-TL-ass}
 on the set of the equivalence classes, which is the set $\bigl(M_1\fus M_2\bigr) \fus M_3$, is actually the identity map.
 The equality~\eqref{eq:hexagon-1} then follows from the identity $\g_{N_1,N_2+N_3} =(\one_{N_2}\otimes \g_{N_1,N_3})\cdot (\g_{N_1,N_2}\otimes \one_{N_3})$, where $\one_{N}$ is the unit on $N$ sites, while the equality~\eqref{eq:hexagon-2} holds because of the identity $\g_{N_1+N_2,N_3} =(\g_{N_1,N_3}\otimes \one_{N_2})\cdot(\one_{N_1}\otimes \g_{N_2,N_3})$. 
\end{proof}

%Combining 
We have thus proven the following theorem.
\begin{Thm}
The category $\catTL=\oplus_{N>0}\catTL_N$ is a braided semi-group category: it has the $\oN$-graded tensor product $\fus$ equipped with the associator $\ass_{A,B,C}$ defined in~\eqref{fusfunc-TL-ass}-\eqref{TL-ass-map} and satisfying the pentagon condition and equipped with the braiding isomorphisms  $\br_{A,B}$ defined  in~\eqref{br-TL}-\eqref{br-TL-1} that satisfy the hexagon conditions.
\end{Thm}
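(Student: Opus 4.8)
The plan is to assemble the theorem directly from the three structural results already established: the construction of the associator (Prop.~\ref{prop:ass}), the pentagon identity (Prop.~\ref{prop:ass-pentagon}), and the two hexagon identities (Prop.~\ref{prop:C-hexagon}). There is essentially no new computation to perform; the task is organizational bookkeeping, checking that the pieces fit together coherently over the whole $\oN$-graded category $\catTL$ rather than just on fixed graded components.

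First I would record the ambient setup: $\catTL=\oplus_{N>0}\catTL_N$ is an abelian $\oC$-linear category with the $\oN$-grading described above, and for each pair $(N_1,N_2)$ the assignment $\fus$ of~\eqref{fusfunc-TL-def} is a bilinear bifunctor $\catTL_{N_1}\times\catTL_{N_2}\to\catTL_{N_1+N_2}$. Bilinearity and functoriality in morphisms follow from the fact that induction $\TL{N}\otimes_{\TL{N_1}\otimes\TL{N_2}}(-)$ is an additive functor, together with the explicit action on morphisms $f\fus g=\id_{\TL{N}}\otimes f\otimes g$ noted after Prop.~\ref{prop:ass}. Thus $\fus$ is a well-defined $\oN$-graded tensor product; this is the content of Prop.~\ref{prop:N-graded} once the associator is supplied.

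Next I would invoke Prop.~\ref{prop:ass}: for every triple of module-arguments we have the natural isomorphism $\ass_{M_1,M_2,M_3}$ of~\eqref{fusfunc-TL-ass}--\eqref{TL-ass-map}, with naturality already checked there; and Prop.~\ref{prop:ass-pentagon} gives the pentagon identity~\eqref{eq:pentagon} for this family. Since $\catTL$ has no grade-$0$ component there is no tensor unit, hence no triangle axiom and no unit coherence to verify: for a semi-group category (a monoidal category with the unit axioms removed) the only coherence datum is the associator subject to the pentagon. Therefore $(\catTL,\fus,\ass)$ is a semi-group category. Then I would turn to the braiding: the element $\g_{N_1,N_2}\in\TL{N_1+N_2}$ and the map $\br_{M_1,M_2}$ of~\eqref{br-TL}--\eqref{br-TL-1} were shown above to be well-defined on equivalence classes, to intertwine the $\TL{N_1+N_2}$-action via the conjugation property~\eqref{g-TL-TL}, and to be bijective, hence an isomorphism; naturality in each slot is immediate from the formula $a\otimes m_1\otimes m_2\mapsto \g_{N_1,N_2}\,a\,\g_{N_1,N_2}^{-1}\otimes m_2\otimes m_1$, since for module maps $f_1,f_2$ both composites in the naturality square send a representative to $\g_{N_1,N_2}\,a\,\g_{N_1,N_2}^{-1}\otimes f_2(m_2)\otimes f_1(m_1)$. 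The hexagon identities~\eqref{eq:hexagon-1}--\eqref{eq:hexagon-2} are then exactly Prop.~\ref{prop:C-hexagon}. Collecting these facts yields the theorem.

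The main obstacle is not a genuine difficulty but rather care in the bookkeeping: one must confirm that $\fus$, $\ass$ and $\br$ are all compatible with the $\oN$-grading (each output lands in the grade obtained by summing the input indices), so that the associator and braiding assemble into well-defined families on all of $\catTL$; that the ``semi-group category'' axiom list really is just bifunctoriality of $\fus$ plus associator plus pentagon (no unit axioms); and that the passage from representatives to equivalence classes used throughout — for $\ass$ as a map of classes and for $\br$ as a map of classes — is consistent, which was verified when $\ass$ and $\br$ were introduced. Once these formalities are in place, the theorem follows by simply citing Prop.~\ref{prop:ass}, Prop.~\ref{prop:ass-pentagon} and Prop.~\ref{prop:C-hexagon}.
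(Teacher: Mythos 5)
Your proposal is correct and matches the paper's approach exactly: the theorem is stated in the paper as a summary immediately after Prop.~\ref{prop:ass}, Prop.~\ref{prop:ass-pentagon}, the well-definedness discussion of $\br_{M_1,M_2}$, and Prop.~\ref{prop:C-hexagon}, with no further argument beyond collecting those results. Your additional remarks on grading compatibility and naturality of the braiding are harmless bookkeeping that the paper leaves implicit.
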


\subsection{Braided monoidal category $\catTLinf$}

We now recall the category $\catTLinf$ obtained in Sec.~\ref{sec:arc-tower} as the direct limit, see~\eqref{catTLinf} with~\eqref{catTLinf-0}, of a direct sequence of the categories $\catTL_N$ inside $\catTL$. The direct limit $\catTLinf$ is an abelian $\oC$-linear category by construction (for any $\q$ which is not $0$ or $\pm\rmi$). 
Our first objective is to study  subquotient structure of objects, e.g. the projective objects in $\catTLinf$. It turns out that there is an interesting correspondence between these projective objects and the so-called staggered representations of the Virasoro algebra (this will be discussed below in Sec.~\ref{sec:outlook}). Our second objective is to study different structures on $\catTLinf$, as tensor product, braiding, dual objects, etc.
The idea here is to use  the tensor product $\fus$ in $\catTL$, its associator $\ass_{M_1,M_2,M_3}$ and the braiding $\br_{M_1,M_2}$ and to show that these structures can be lifted to the limit category $\catTLinf$.

\subsubsection{Standard and projective objects in $\catTLinf$}
\label{sec:st-proj-Cinf}
For the first objective, we  recall the definition of the standard TL modules given above~\eqref{dj} and  
Prop.~\ref{prop:funGl-W} -- it suggests the following numeration of standard objects in the direct limit $\catTLinf$:
\begin{equation}\label{catTLinf-Wj}
\StTL{j} \; \equiv \; \{\, \StTL{j}[2j],\, \StTL{j}[2j+2],\, \ldots,\, \StTL{j}[N], \, \ldots\, \}\ , \qquad j\in\half\oN\ ,
\end{equation}
where the set in the figure brackets is the equivalence class of the objects in $\catTL$ connected by the globalization functors $\funGl_N$ for different values of $N$. 
In general, for any non-zero $\TL{N}$-mo\-du\-le $M[N]$ such that its localisation is zero, we define
\begin{equation}\label{catTLinf-M}
M \; \equiv \; \{\, M[N],\, M[N+2],\, \ldots\, \},\quad \text{with}\quad M[N+2]\equiv \funGl_N (M[N]),\quad \text{etc.,}
\end{equation}
as the corresponding equivalence class in the direct limit $\catTLinf$.

To study  the subquotient structure of $\StTL{j}$ and their projective covers, we recall first the definition of the space of morphisms in the direct limit category, see definitions below~\eqref{eq:dir-lim}: let $M_1, M_2\in \catTLinf$ then the vector space of morphisms is the set of equivalence classes
\begin{equation}\label{Hom-lim-def}
\Hom_{\catTLinf} (M_1, M_2)  = \coprod_{N\in\oN} \Hom_{\catTL_N} (M_1[N],M_2[N]) / \sim\ , 
\end{equation}
where the equivalence relations are defined as $f_1\sim f_2$ iff $f_2$ is the image of a sequence of the functors $\funGl_N$ applied to $f_1$. Therefore, to specify a morphism in $\Hom_{\catTLinf}(M_1,M_2)$ it is enough to choose a non-zero representative in  $\Hom_{\catTL_N} (M_1[N],M_2[N])$.

\medskip
Recall then the description of the abelian categories $\catTL_{N}$ in Sec.~\ref{sec:CN}.
It is clear that when $\q$ is not a root of unity the direct limit $\catTLinf$ is a semi-simple category and isomorphism classes of simple objects are exhausted by $\StTL{j}$ from~\eqref{catTLinf-Wj}.

Let $\q=e^{\rmi\pi/p}$  with integer $p \geq 3$ and recall the notation 
$s\equiv s(j)=(2j+1)\;\mathrm{mod}\; p$. By the definition of the $\Hom$ spaces in $\catTLinf$ we have a morphism from $\StTL{k}$ to $\StTL{j}$ iff $k=j$ or $k=j+p-s$ and the properties of the morphism for  $k=j+p-s$ are similar to the finite $N$ case: its kernel is the socle and its image is the socle as well.
% (without further restrictions on $k$ as we had at finite $N$ cases). 
Then, the subquotient structure of the standard objects $\StTL{j}$ is the following: 
if $s(j)=0$ the objects are simple while for non-zero $s(j)$
we have the subquotient structure
\begin{equation}\label{StTL-inf}
\StTL{j} \; = \; \IrTL{j} \longrightarrow \IrTL{j+p-s}
\end{equation}
where we introduce the notation $\IrTL{j}$ for the irreducible quotient of $\StTL{j}$ -- it is the equivalence class of the simple TL modules $\IrTL{j}[N]$ in $\catTL$ (note that  now there are no zero conditions on $\IrTL{k}$ as we had for finite $N$ cases).

% Having this subquotient structure and that 
We note further that the projective cover of $\IrTL{j}[N+2]$ is $\PrTL{j}[N+2] = \funGl_N (\PrTL{j}[N])$.
% because 
We have thus the projective objects
 in the direct limit $\catTLinf$:
\begin{equation}\label{catTLinf-Pj}
\PrTL{j} \; \equiv \; \{\, \PrTL{j}[2j],\, \PrTL{j}[2j+2],\, \ldots,\, \PrTL{j}[N], \, \ldots\, \}\ , \qquad j\in\half\oN\ ,
\end{equation}
as the equivalence classes of the objects in $\catTL$ connected by the globalisation functors $\funGl_N$ for different values of $N$.
%Let $\PrTL{j}$ denotes  the equivalence class of the projective objects 
%the projective cover  of the simple module $\IrTL{j}[N]$.
Following the description of the projective covers in $\catTL_N$ around~\eqref{prTL-pic}, the projective objects $\PrTL{j}$ are then simple if $s(j)=0$, they are equal to $\StTL{j}$ for $0\leq j\leq \half(p-2)$ and otherwise have the following structure
\begin{align}\label{prTL-pic-inf}
   \xymatrix@C=5pt@R=25pt@M=2pt{%
    &&\\
    &\PrTL{j}\quad = &\\
    &&
 }
&  \xymatrix@C=20pt@R=20pt{%
    &{\IrTL{j}}\ar[dl]\ar[dr]&\\
    \IrTL{j-s}\ar[dr]&&\IrTL{j+p-s}\ar[dl]\\
    &\IrTL{j}&
 } 
\end{align}
That they are  projective covers of the simple objects $\IrTL{j}$ is easy to show by the definition of the direct limit (they are projective indecomposable and obviously cover $\IrTL{j}$).
Note that now in $\catTLinf$ we do not have the zero conditions as on the nodes for~\eqref{prTL-pic} and the subquotient structure for $\PrTL{j}$ has always four non-zero simple subquotients (if $s(j)\ne 0$ and $2j\geq  p$).

We have thus the reciprocity relation 
\begin{equation}
[\PrTL{j}:\StTL{j'}] = [\StTL{j'}:\IrTL{j}]
\end{equation}
in $\catTLinf$ as well (as in $\catTL_N$) and it is a highest-weight category with the standard objects described by~\eqref{catTLinf-Wj} and~\eqref{StTL-inf}.

\medskip

We will come back to the  $\StTL{j}$ and their projective covers $\PrTL{j}$ for $\q$ a root of unity at the last section~\ref{sec:outlook} where we discuss the connection with the Virasoro algebra representation theory.

\subsubsection{Tensor product in $\catTLinf$}
We now turn to our second objective in this section and study the monoidal structure on~$\catTLinf$.
Let us define the tensor product $\fuslim$ on $\catTLinf$ as
\begin{equation}\label{fuslim}
\fuslim:\qquad (M_1,M_2)\; \mapsto\; M_1\fuslim M_2 = \lim\Bigl[M_1[N_1]\fus M_2[N_2]\Bigr] \ ,
\end{equation}
where $\lim[\ldots]$ stands for taking the direct limit of the object (in this case, from $\catTL_{N_1+N_2}$) or the corresponding equivalence class (with respect to the functors $\funGl_N$). We show now that~\eqref{fuslim} is well-defined and does not depend on representatives in the classes $M_1$ and~$M_2$.

We begin by establishing a simple lemma.
\begin{lemma}\label{lem:TeTe}
Let $T$ be an associative algebra over a field $k$ with unit $\one$, and $I_e = T\cdot e$ is its left ideal generated by an idempotent $e\in T$. We then have
$$
I_e \cdot I_e = I_e\ .
$$
\end{lemma}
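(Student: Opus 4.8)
The plan is to establish the two inclusions $I_e\cdot I_e\subseteq I_e$ and $I_e\subseteq I_e\cdot I_e$ separately; each is a one-line check, so the real content is just being careful about which side the factors sit on.

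For the inclusion $I_e\cdot I_e\subseteq I_e$, I would recall that by definition $I_e\cdot I_e$ is the $k$-span of products $xy$ with $x,y\in I_e$. Here one should \emph{not} argue ``$xy\in I_e\cdot T$'', since $I_e=Te$ is only a left ideal and need not be stable under right multiplication by arbitrary elements of $T$. Instead, since $x\in I_e\subseteq T$ and $I_e$ is a left ideal, we have $xy\in T\cdot I_e=I_e$, and the span of such elements stays in $I_e$. (Note this direction does not even use $e^2=e$.)

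For the reverse inclusion $I_e\subseteq I_e\cdot I_e$, this is where the idempotency enters. Take an arbitrary $x\in I_e$ and write $x=ae$ for some $a\in T$. Using $e^2=e$ we get $x=ae=ae^2=(ae)\cdot e$. Now $ae\in I_e$ and $e=\one\cdot e\in I_e$, so $x$ is a product of two elements of $I_e$, hence $x\in I_e\cdot I_e$.

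Combining the two inclusions gives $I_e\cdot I_e=I_e$. There is no genuine obstacle here; the only point requiring a moment's care is the asymmetry noted above — placing the $I_e$-factor on the right in the first inclusion so as to use that $I_e$ is a left ideal, and invoking $e^2=e$ only for the second inclusion.
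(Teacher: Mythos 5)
Your proof is correct and follows essentially the same route as the paper: the first inclusion uses only that $I_e$ is a left ideal, and the second rewrites $a e = (a e)\cdot(\one\cdot e)$ using $e^2=e$. Your extra remark about placing the $I_e$-factor on the right so as to invoke the left-ideal property is a sensible clarification but does not change the argument.
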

\begin{proof}
It is obvious that $I_e \cdot I_e\subset I_e$ because $I_e$ is the left ideal. On the other hand,  we also have  $I_e \subset I_e \cdot I_e$. Indeed, any element of the form $a\cdot e\in I_e$ can be rewritten in the form $(a \cdot e) \cdot (\one \cdot e) \in I_e\cdot I_e$. This proves the statement in the lemma.
\end{proof}

And now we use this lemma to prove yet another one.

\begin{lemma}\label{lem:Gl-class}
The TL modules $M_1[N_1]\fus M_2[N_2]$ and $M_1[N_1]\fus M_2[N_2+2]$ are in the same equivalence class in $\catTLinf$ or, equivalently, we have
\begin{equation}\label{eq:Gl-class}
\funGl_{N_1+N_2}:\qquad M_1[N_1]\fus M_2[N_2] \; \mapsto M_1[N_1]\fus M_2[N_2+2]\ . 
\end{equation}
\end{lemma}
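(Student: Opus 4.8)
The plan is to show that both $\TL{N_1+N_2+2}$-modules appearing in~\eqref{eq:Gl-class} are canonically identified, by multiplying out the balanced tensor products, with one and the same object, namely
$$
\TL{N+2}\,\e \otimes_{\TL{N_1}\otimes\TL{N_2}} (M_1\otimes M_2)\ ,\qquad N:=N_1+N_2,
$$
where $\e=\ffrac{1}{m}e_{N+1}\in\TL{N+2}$ is the idempotent of~\eqref{e-def}, and $\TL{N_1}\otimes\TL{N_2}$ is embedded into $\TL{N+2}$ with $\TL{N_1}$ acting on the first $N_1$ sites and $\TL{N_2}$ on the next $N_2$ (so it sits inside the idempotent subalgebra $\TL{N}=\e\TL{N+2}\e$), acting on $\TL{N+2}\e$ on the right by multiplication. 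Granting this, $\funGl_{N}(M_1\fus M_2)$ coincides with $M_1\fus M_2[N_2+2]$, and then taking $k=N+2$ in the definition of the equivalence relation on $\catTLinf$ gives the claim.

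First I would handle the left-hand side. Unfolding $\funGl_N$ from~\eqref{glob-fun} and $\fus$ from~\eqref{fusfunc-TL-def} gives
$\funGl_N(M_1\fus M_2)=\TL{N+2}\e\otimes_{\TL N}\bigl(\TL N\otimes_{\TL{N_1}\otimes\TL{N_2}}M_1\otimes M_2\bigr)$, and associativity of the balanced tensor product over rings together with $\TL{N+2}\e\otimes_{\TL N}\TL N\cong\TL{N+2}\e$ collapses this to the common object above. Next, the right-hand side: here $M_2[N_2+2]=\funGl_{N_2}(M_2)=\TL{N_2+2}\,\ffrac1m e_{N_2+1}\otimes_{\TL{N_2}}M_2$, and $\TL{N_1}\otimes\TL{N_2+2}$ is embedded into $\TL{N+2}$ standardly. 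I would first rearrange $M_1\otimes\bigl(\TL{N_2+2}\,\ffrac1m e_{N_2+1}\otimes_{\TL{N_2}}M_2\bigr)$ into $\bigl(\TL{N_1}\otimes\TL{N_2+2}\,\ffrac1m e_{N_2+1}\bigr)\otimes_{\TL{N_1}\otimes\TL{N_2}}(M_1\otimes M_2)$, using $M_1\cong\TL{N_1}\otimes_{\TL{N_1}}M_1$ and merging the two balanced products; then apply associativity again to pull $\TL{N+2}\otimes_{\TL{N_1}\otimes\TL{N_2+2}}(-)$ onto the $\TL{N_1}\otimes\TL{N_2+2}\,\ffrac1m e_{N_2+1}$ factor; and finally identify $\TL{N+2}\otimes_{\TL{N_1}\otimes\TL{N_2+2}}\bigl(\TL{N_1}\otimes\TL{N_2+2}\bigr)\ffrac1m e_{N_2+1}\cong\TL{N+2}\e$ by the map $a\otimes b\ffrac1m e_{N_2+1}\mapsto ab\,\e$. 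That this is an isomorphism of $\TL{N+2}$-modules uses only that $\ffrac1m e_{N_2+1}$ is an idempotent and that under the standard embedding $\one_{N_1}\otimes\ffrac1m e_{N_2+1}\mapsto\ffrac1m e_{N_1+N_2+1}=\e$ (a baby case of Lemma~\ref{lem:TeTe}; the inverse $a\e\mapsto a\otimes\e$ is well defined since $a\otimes\e=a\e\otimes\e$). This produces the same common object.

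The one point that needs genuine care — and which I expect to be the main obstacle — is checking that the two ways $\TL{N_1}\otimes\TL{N_2}$ lands inside $\TL{N+2}$ really give the same subalgebra with the same right action on $\TL{N+2}\e$: in the left-hand computation via $\TL N=\e\TL{N+2}\e$, and in the right-hand one via the standard embedding of $\TL{N_1}\otimes\TL{N_2+2}$ composed with the arc-embedding $\TL{N_2}\cong \ffrac1m e_{N_2+1}\TL{N_2+2}\ffrac1m e_{N_2+1}$ inside $\TL{N_2+2}$. Both amount, in terms of TL diagrams, to letting $\TL{N_1}$ act on sites $1,\dots,N_1$ and $\TL{N_2}$ on sites $N_1+1,\dots,N$ with an arc closed off at sites $N+1,N+2$; concretely, using $e_je_{N+1}=e_{N+1}e_j$ for $j\le N-1$, one checks that $\ffrac1m e_{N_2+1}e_{N_2+k}\ffrac1m e_{N_2+1}$ and the corresponding arc-embedded $e_{N_1+k}\in\TL N\subset\TL{N+2}$ both equal $e_{N_1+k}\e$, and similarly on the $\TL{N_1}$ generators the two embeddings are literally the standard one. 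Once this bookkeeping is done, all the remaining maps are the tautological ``multiply out'' isomorphisms and are manifestly $\TL{N+2}$-linear and natural in $M_1$ and $M_2$; the naturality is exactly what will be needed afterwards when checking that $\fuslim$ in~\eqref{fuslim} is well defined on equivalence classes.
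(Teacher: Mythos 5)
Your proposal is correct and follows essentially the same route as the paper: both reduce the two modules, via associativity of the balanced tensor product and the idempotent identities, to the common object $\TL{N+2}\e\otimes_{\TL{N_1}\otimes\TL{N_2}}(M_1\otimes M_2)$ (the paper runs this as a single chain from left to right, equations \eqref{eq:Gl-M1M2}--\eqref{eq:Gl-M1M2-3}, using Lemma~\ref{lem:TeTe} where you use $M\otimes_A A\cong M$). Your explicit check that the two embeddings of $\TL{N_1}\otimes\TL{N_2}$ into $\TL{N+2}$ agree is a point the paper only remarks on in passing, but it is the same argument in substance.
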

\begin{proof}
By definition of $\fus$ and the functor $\funGl_N$, we set $N=N_1+N_2$ (also $\TL{1}\equiv\TL{N_1}$, etc.) and we have 
\begin{multline}\label{eq:Gl-M1M2}
 M_1[N_1]\fus M_2[N_2]  = \TL{N}\otimes_{(\TL1\otimes\TL2)}  M_1[N_1]\otimes M_2[N_2] \\
 \xrightarrow{\quad \funGl_{N}\quad } 
 \TL{N+2}\e \otimes_{\TL{N}}\bigl( \TL{N}\otimes_{(\TL1\otimes\TL2)} M_1[N_1]\otimes M_2[N_2] \bigr)
 \end{multline}
 and then we establish the following sequence of isomorphisms of the right-hand side of~\eqref{eq:Gl-M1M2}
 \begin{multline}\label{eq:Gl-M1M2-2}
 \text{RHS of~\eqref{eq:Gl-M1M2}}\; \xrightarrow{\; \text{associativity} \;}  
  \bigl( \TL{N+2}\e \otimes_{\TL{N}}\TL{N}\bigr) \otimes_{(\TL1\otimes\TL2)} M_1[N_1]\otimes M_2[N_2] \\
 \xrightarrow{\; \TL{N} = \e \TL{N+2}\e \;}  
   \TL{N+2}\e\TL{N +2}\e \otimes_{(\TL1\otimes\TL2)} M_1[N_1]\otimes M_2[N_2] \\
    \xrightarrow{\; \text{Lem.~\ref{lem:TeTe}} \;}   \TL{N+2}\e \otimes_{(\TL1\otimes\TL2)} M_1[N_1]\otimes M_2[N_2]\ ,
 \end{multline}
 where the second isomorphism is due to the first balanced tensor product in the first line which is over the subalgebra $\TL{N} =  \e \TL{N+2}\e$, while we used Lem.~\ref{lem:TeTe} for the third isomorphism. Note that $\TL{2}$ in the third line stands for $\e \TL{N_2+2}\e$ and $\e$ is considered as the corresponding idempotent in the subalgebra $\TL{N_2+2}\subset\TL{N+2}$, also the module $M_2[N_2]$ has to be considered as the corresponding  module over $\e \TL{N_2+2}\e$. Then, we rewrite  $\TL{N+2}$ as $\TL{N+2}\otimes_{(\TL{N_1}\otimes\TL{N_2+2})}\TL{N_1}\otimes\TL{N_2+2}$ and establish further isomorphisms
  \begin{multline}\label{eq:Gl-M1M2-3}
 \!\! \!\! \!\!  \text{RHS of~\eqref{eq:Gl-M1M2-2}}
    \xrightarrow{ \;\cong \; }   \TL{N+2}\otimes_{(\TL{N_1}\otimes\TL{N_2+2})} \Bigl( \bigl(\TL{N_1}\otimes\TL{N_2+2}\e\bigr) \otimes_{(\TL{1}\otimes\TL{2})} M_1[N_1]\otimes M_2[N_2] \Bigr)\\
     \xrightarrow{\quad \cong \quad }  \TL{N+2}\otimes_{(\TL{N_1}\otimes\TL{N_2+2})}   M_1[N_1]\otimes \bigl( \TL{N_2+2}\e\otimes_{\TL{2}} M_2[N_2] \bigr)   \ ,
\end{multline}
where we used simple rearrangement of the tensor factors and that $\TL{N_1}\otimes_{\TL1}M_1[N_1] = M_1[N_1]$. The right-hand side of~\eqref{eq:Gl-M1M2-3} obviously equals to $M_1[N_1]\fus \bigl(\funGl_{N_2}(M_2[N_2])\bigr)$ which is $M_1[N_1]\fus M_2[N_2+2]$ by the definition of  $M_2[N_2+2]$. We have thus established an isomorphism between the right-hand sides of~\eqref{eq:Gl-M1M2} and~\eqref{eq:Gl-M1M2-3}. Finally, note that the $\TL{N+2}$ actions on these two spaces are equal and not just isomorphic and the composition of the maps is the identity map. 
 
Indeed, the isomorphism from the right-hand side of~\eqref{eq:Gl-M1M2} (or $\funGl_N(M_1[N_1]\fus M_2[N_2])$) to the right-hand side of~\eqref{eq:Gl-M1M2-3} is given explicitly by the map
\begin{equation}\label{eq:kappa}
\varkappa: \quad a \e \otimes b \otimes m_1\otimes m_2\; \mapsto \; a \cdot b \otimes m_1\otimes (\one_{N_2+2}\e \otimes m_2)\ ,
\end{equation}
where $a\in\TL{N+2}$ and $b\in\TL{N}$, and $m_i\in M_i[N_i]$, and we label the generators $e_j$ of $\TL{N_2+2}$ from $j=N_1+1$ to $N+1$, as usual for the ``right'' subalgebra, so $\e$ is in the subalgebra $\TL{N_2+2}$.
Both the elements (in LHS and RHS of~\eqref{eq:kappa}) are just different representatives of the same equivalence class, which is the element of the balanced tensor product in the third line of~\eqref{eq:Gl-M1M2-2}. The map $\varkappa$ is therefore the identity map on the set of the equivalence classes.
This finishes our proof.
\end{proof}

 Equivalently, we have the following lemma for the adjoint functors. 
\begin{lemma} \label{lem:loc-fus}
Let $N=N_1+N_2$ and  $M_1\in\catTL_{N_1}$ and $M_2\in\catTL_{N_2+2}$. The localisation functor has the following property:
$$
\funLoc_{N_1+N_2}: \quad M_1[N_1]\fus M_2[N_2+2] \;  \mapsto \; M_1[N_1]\fus \e M_2[N_2+2] \ .
$$
%$$
%\funLoc_{N_1+N_2}: \quad M_1[N_1]\fus M_2[N_2+2] \;  \mapsto \; M_1[N_1]\fus M_2[N_2] \ ,
%$$
%where we set $M_2[N_2] = \e M_2[N_2+2]$.
\end{lemma}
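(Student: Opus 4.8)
The plan is to derive the statement from Lemma~\ref{lem:Gl-class}, of which it is precisely the adjoint (``localization'') counterpart, as announced just before the lemma. Recall that in the inductive system built from the globalization functors the object denoted $M_2[N_2+2]$ is, by construction, $\funGl_{N_2}(M_2[N_2])$ for $M_2[N_2]=\funLoc_{N_2}(M_2[N_2+2])=\e M_2[N_2+2]$; so the assertion to be proved amounts to $\funLoc_{N_1+N_2}\bigl(M_1[N_1]\fus\funGl_{N_2}(M_2[N_2])\bigr)\cong M_1[N_1]\fus M_2[N_2]$.

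First I would apply the functor $\funLoc_{N_1+N_2}$ to both sides of the isomorphism $\funGl_{N_1+N_2}\bigl(M_1[N_1]\fus M_2[N_2]\bigr)\cong M_1[N_1]\fus M_2[N_2+2]$ supplied by Lemma~\ref{lem:Gl-class}. On the left-hand side $\funLoc_{N_1+N_2}\circ\funGl_{N_1+N_2}$ is naturally isomorphic to the identity functor on $\catTL_{N_1+N_2}$ (recorded in Sec.~\ref{sec:arc-tower}), so the left-hand side collapses to $M_1[N_1]\fus M_2[N_2]$, while the right-hand side becomes $\funLoc_{N_1+N_2}\bigl(M_1[N_1]\fus M_2[N_2+2]\bigr)$. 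Combining this with the identification $M_2[N_2]=\funLoc_{N_2}(M_2[N_2+2])=\e M_2[N_2+2]$ yields exactly the claimed behaviour of $\funLoc_{N_1+N_2}$; naturality of the isomorphism so obtained is inherited from naturality of $\funLoc\circ\funGl\cong\id$ and of the isomorphism in Lemma~\ref{lem:Gl-class}.

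Alternatively, a self-contained argument can be given running parallel to the proof of Lemma~\ref{lem:Gl-class}: writing $\funLoc_{N_1+N_2}(M_1\fus M_2)=\e\,\TL{N_1+N_2+2}\otimes_{\TL{N_1}\otimes\TL{N_2+2}}(M_1\otimes M_2)$ with $\e$ as in~\eqref{e-def}, one rearranges the balanced tensor factors by associativity, uses the isomorphism $\TL{N_1+N_2}\cong\e\TL{N_1+N_2+2}\e$, and absorbs $\e$ (which lies in the ``right'' subalgebra $\TL{N_2+2}$) into the second tensorand with the help of Lemma~\ref{lem:TeTe}, arriving at $M_1\fus\e M_2$ by an explicit map of the same shape as the map $\varkappa$ of~\eqref{eq:kappa} that is the identity on equivalence classes, hence an isomorphism. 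The step demanding the most care is the bookkeeping of idempotents: the one used by $\funLoc_{N_1+N_2}$ is built from $e_{N_1+N_2+1}\in\TL{N_1+N_2+2}$ while the one used by $\funLoc_{N_2}$ is built from $e_{N_2+1}\in\TL{N_2+2}$, and one must verify that the standard embedding $\TL{N_2+2}\hookrightarrow\TL{N_1+N_2+2}$ identifies the two. This compatibility is exactly what makes the two localizations interact as stated, and it is also the reason the statement is to be read with $M_2[N_2+2]$ a globalization: for a general object of $\catTL_{N_2+2}$, applying $\funGl_{N_1+N_2}$ to the two sides gives, by Prop.~\ref{prop:Ie}, the ideal $I_{\es}$ acting on $M_1\fus M_2$ versus $M_1\fus(I_{\es}M_2)$, and these need not coincide.
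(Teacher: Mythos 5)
Your proposal is correct, and your first argument takes a genuinely different route from the paper's. The paper proves the lemma by direct manipulation of the balanced tensor product: it writes $\funLoc_{N_1+N_2}\bigl(M_1[N_1]\fus M_2[N_2+2]\bigr)=\e\TL{N+2}\otimes_{\TL{N_1}\otimes\TL{N_2+2}}\bigl(M_1[N_1]\otimes M_2[N_2+2]\bigr)$ and invokes Lemma~\ref{lem:TeTe} to replace this by $\e\TL{N+2}\e\otimes_{\TL{N_1}\otimes\e\TL{N_2+2}\e}\,\e\bigl(M_1[N_1]\otimes M_2[N_2+2]\bigr)$, which is $M_1[N_1]\fus\e M_2[N_2+2]$ under the identifications $\TL{N}\cong\e\TL{N+2}\e$ and $\TL{N_2}\cong\e\TL{N_2+2}\e$ --- i.e.\ exactly your second, ``self-contained'' sketch, including the idempotent bookkeeping you describe. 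Your first argument instead obtains the statement formally by applying $\funLoc_{N_1+N_2}$ to the equality of Lemma~\ref{lem:Gl-class} and cancelling with $\funLoc\circ\funGl\cong\id$; this costs nothing beyond results already proved, makes transparent in what sense the lemma is the adjoint counterpart of Lemma~\ref{lem:Gl-class}, and avoids the idempotent manipulations entirely, at the price of covering only those $M_2[N_2+2]$ lying in the image of $\funGl_{N_2}$. That restriction, which you flag at the end, is not merely pedantic: for a general $M_2\in\catTL_{N_2+2}$ the statement actually fails. Take $N_1=N_2=1$, $M_1=\oC$ and $M_2=\TL{3}$ the regular module; then $\funLoc_{2}\bigl(M_1\fus M_2\bigr)=\e\TL{4}$ is $5$-dimensional, whereas $M_1\fus\e M_2\cong\TL{2}\otimes_{\oC}\oC^{2}$ is $4$-dimensional. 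The culprit is the step absorbing $\e$ into the second tensorand: it needs every element $\e a\otimes m_1\otimes m_2$ to be rewritable with middle slot in $\e\TL{N+2}\e\cdot\bigl(\TL{N_1}\otimes\TL{N_2+2}\bigr)$, which one can arrange when $I_{\es}\cdot M_2=M_2$ (equivalently, when $M_2[N_2+2]$ is a globalization --- the only situation in which the lemma is invoked) but not in general. So your reading of the hypothesis is the intended one, and on this point your write-up is more careful than the paper's own proof.
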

\begin{proof}
We have by definition
$$
\funLoc_{N_1+N_2}\bigl(M_1[N_1]\fus M_2[N_2+2]\bigr) =   \e \TL{N+2}\tensor_{\bigl(\TL{N_1}\tensor\TL{N_2+2}\bigr)} M_1[N_1]\tensor M_2[N_2+2].
$$
Note then that the latter expression is isomorphic to
\begin{equation}\label{eq:loc-fus}
 \e \TL{N+2}\e \tensor_{\bigl(\TL{N_1}\tensor \e \TL{N_2+2}\e\bigr)} \e \Bigl( M_1[N_1]\tensor M_2[N_2+2]\Bigr)
\end{equation}
because we have (by Lem.~\ref{lem:TeTe})
$$
\e\TL{N}\e \TL{N} \cong \e \TL{N}.
$$
The expression in~\eqref{eq:loc-fus} obviously coincides with $M_1[N_1]\fus \e M_2[N_2+2]$ and thus finishes the proof.
\end{proof}

Applying repeatedly  Lem.~\ref{lem:Gl-class}, we see that the TL modules $M_1[N_1]\fus M_2[N_2]$ and $M_1[N_1]\fus M_2[N_2+2n]$, for integer $n$, are in the same equivalence class, as an object  in~$\catTLinf$.  Now, we would like to vary  the index $N_1$. For this, recall the definition of the left-arc embeddings using the idempotents $\el$ and the corresponding globalisation functors $\funGll_N$ introduced in Rem.~\ref{rem:funGll}. Repeating arguments in the proof of Lem.~\ref{lem:Gl-class} we obtain
\begin{equation}\label{eq:Gl-class-l}
\funGll_{N_1+N_2}:\qquad M_1[N_1]\fus M_2[N_2] \; \mapsto  \funGll_{N_1}(M_1[N_1])\fus M_2[N_2]\ . 
\end{equation}
 Finally, note that the module $\funGll_{N_1}(M_1[N_1])$ is identical to $\funGl_{N_1}(M_1[N_1])$, as the $\TL{N_1+2}$ actions are equal
 (the use of $\el$ instead of $\e$ is a matter of  convention).
We thus obtain that the TL modules $M_1[N_1]\fus M_2[N_2]$ and $M_1[N_1+2n]\fus M_2[N_2]$, for integer $n$, are in the same equivalence class in $\catTLinf$.  Altogether, we 
conclude with the following corollary.

\begin{cor}\label{cor:M1M2}
The TL modules $M_1[N_1]\fus M_2[N_2]$ and $M_1[N_1+2n]\fus M_2[N_2+2m]$, for integer $n$ and $m$, are in the same equivalence class in the direct limit $\catTLinf$ and thus just different representatives of the same object in $\catTLinf$. 
 If $m=-n$, the two $\TL{N_1+N_2}$ modules are even identical in $\catTL_{N_1+N_2}$.
Their equivalence class is by definition the fusion $M_1\fuslim M_2$ of two classes $M_1$ and $M_2$ in $\catTLinf$. 
\end{cor}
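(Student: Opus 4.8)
The plan is to deduce the corollary from the two ``one-step'' results already established: Lemma~\ref{lem:Gl-class}, which moves the right index $N_2\mapsto N_2+2$ by the globalisation functor $\funGl$, and its left-arc counterpart~\eqref{eq:Gl-class-l}, which moves the left index $N_1\mapsto N_1+2$ by $\funGll$; together with the observation in Rem.~\ref{rem:funGll} that $\funGll_k$ and $\funGl_k$ produce the same module (the use of $\el$ instead of $\e$ being a matter of convention), so that $\funGll_{N_1}(M_1[N_1])=M_1[N_1+2]$ is the representative of the class $M_1$ prescribed by~\eqref{catTLinf-M}.

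First I would fix a representative $M_1[N_1]\fus M_2[N_2]$ and iterate Lemma~\ref{lem:Gl-class}: for $m\geq 0$ this gives a chain $M_1[N_1]\fus M_2[N_2]\,\to\,M_1[N_1]\fus M_2[N_2+2]\,\to\,\dots\,\to\,M_1[N_1]\fus M_2[N_2+2m]$ in which every arrow is a globalisation functor $\funGl_\bullet$, using at each step that $M_2[N_2+2k+2]=\funGl_{N_2+2k}(M_2[N_2+2k])$ is by definition the next representative of the class $M_2$; for $m<0$ one reads the same chain backwards (we tacitly assume $N_2+2m$ is still an admissible level for $M_2$). Running the same argument with~\eqref{eq:Gl-class-l} then connects $M_1[N_1]\fus M_2[N_2+2m]$ to $M_1[N_1+2n]\fus M_2[N_2+2m]$ through the left globalisations, which by Rem.~\ref{rem:funGll} are identified with the structure functors $\funGl_\bullet$. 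Since objects joined by a chain of $\funGl_\bullet$'s lie in one equivalence class of $\catTLinf$, all of these modules represent the same object; as this is manifestly independent of the chosen representatives $M_1[N_1]$, $M_2[N_2]$, it is by~\eqref{fuslim} precisely $M_1\fuslim M_2$.

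For the sharper claim --- that when $m=-n$ the two modules coincide already in $\catTL_{N_1+N_2}$ --- it suffices to treat $n=1$ and iterate. Put $N=N_1+N_2$. Using $M_2[N_2]=\funGl_{N_2-2}(M_2[N_2-2])$ (again by~\eqref{catTLinf-M}), Lemma~\ref{lem:Gl-class} gives $M_1[N_1]\fus M_2[N_2]=\funGl_{N-2}\!\bigl(M_1[N_1]\fus M_2[N_2-2]\bigr)$ \emph{as} $\TL{N}$-modules --- the point being that this identification, realised in the proof of that lemma by the map $\varkappa$ of~\eqref{eq:kappa}, is the identity on equivalence-class representatives, with the two $\TL{N}$-actions genuinely agreeing. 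By the same token~\eqref{eq:Gl-class-l} gives $M_1[N_1+2]\fus M_2[N_2-2]=\funGll_{N-2}\!\bigl(M_1[N_1]\fus M_2[N_2-2]\bigr)$. Since $\funGl_{N-2}$ and $\funGll_{N-2}$ output the same $\TL{N}$-module, we conclude $M_1[N_1]\fus M_2[N_2]=M_1[N_1+2]\fus M_2[N_2-2]$ on the nose. I expect this last step to be the only real obstacle: one has to follow the balanced-tensor-product rearrangements of Lemmas~\ref{lem:TeTe} and~\ref{lem:Gl-class} (and of~\eqref{eq:Gl-class-l}) carefully enough to be certain they are carried out by the identity on representatives, so that ``adding an arc to the left of the $M_1$-block'' followed by ``removing an arc from the right of the $M_2$-block'' returns the same $\TL{N}$-module and not just an isomorphic one --- bookkeeping, but no new idea.
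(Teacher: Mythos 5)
Your proposal is correct and follows essentially the same route as the paper: iterate Lemma~\ref{lem:Gl-class} to shift $N_2$, use its left-arc counterpart~\eqref{eq:Gl-class-l} together with the identification $\funGll_{N_1}(M_1[N_1])=\funGl_{N_1}(M_1[N_1])$ from the discussion around Rem.~\ref{rem:funGll} to shift $N_1$, and observe that all identifications are identity maps on representatives (as emphasized at the end of the proof of Lemma~\ref{lem:Gl-class}), which yields the on-the-nose equality when $m=-n$. Your extra care about admissible levels and about tracing the map $\varkappa$ of~\eqref{eq:kappa} is exactly the bookkeeping the paper leaves implicit.
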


We next define the associator and the braiding for the tensor product $\fuslim$ in $\catTLinf$. Recall that to specify a morphism in $\Hom_{\catTLinf}(M_1,M_2)$ it is enough to choose a representative in  $\Hom_{\catTL_N} (M_1[N],M_2[N])$. 

%\subsubsection{Associator in $\catTLinf$} 
\begin{dfn}\label{def:asslim}
For a triple of objects $M_1$, $M_2$, $M_3$ in $\catTLinf$ we define an isomorphism
\begin{equation}\label{asslim}
\asslim_{M_1,M_2,M_3}: \quad \bigl(M_1\fuslim M_2\bigr) \fuslim M_3 \xrightarrow{\quad \cong\quad}
M_1\fuslim \bigl( M_2 \fuslim M_3\bigr) 
\end{equation}
as follows: take any triple of positive integers $(N_1,N_2,N_3)$ such that $M_i[N_i]$ are non-zero, for $i=1,2,3$, and take then the corresponding associator
$\ass_{M_1[N_1],M_2[N_2],M_3[N_3]}$ defined in Prop.~\ref{prop:ass}, then $\asslim_{M_1,M_2,M_3}$ is its equivalence class, i.e. the corresponding element in the quotient~\eqref{Hom-lim-def}.
\end{dfn}

Of course, we have to show that our $\asslim$ is well-defined and does not depend on the choice of representatives in the $\Hom$ spaces. For this, we first note by Cor.~\ref{cor:M1M2} that $\bigl(M_1[N_1]\fus M_2[N_2]\bigr) \fus M_3[N_3]$ is the same object in $\catTL_{N}$ for any choice of $(N_1,N_2,N_3)$ such that $N=N_1+N_2+N_3$, and similarly for the other bracketing. The corresponding isomorphisms $\ass_{M_1[N_1],M_2[N_2],M_3[N_3]}$ for the different choices of $N_i$ are also identical. We then only need to show that the associators for   $(N_1,N_2,N_3)$ and  $(N_1,N_2,N_3+2)$ are in the same equivalence class, i.e. the second is the image of the first through the functor $\funGl_N$. Recall that $\funGl_N(f) = \id_{\TL{N+2}}\fus f$.
To show this, we first calculate  the isomorphism $\funGl_N(\ass_{M_1[N_1],M_2[N_2],M_3[N_3]})$ on a general element from $\funGl_N\bigl((M_1[N_1]\fus M_2[N_2]) \fus M_3[N_3]\bigr)$ as
\begin{equation}\label{eq:id-ass}
\id\otimes \ass: \quad c\e \otimes \bigl(a\otimes (b\otimes m_1\otimes m_2)\otimes m_3\bigr) \; \mapsto \;  c\e \otimes a \cdot b\otimes \bigl(m_1\otimes (\one_{N_2+N_3}\otimes m_2\otimes m_3) \bigr)
\end{equation}
and it can be further rewritten as (by applying the identity map $(\id\otimes\varkappa)\circ\varkappa$, see the definition in~\eqref{eq:kappa})
$$
c \cdot a \cdot b \otimes m_1 \otimes \bigl(\one_{N_2+N_3+2}\otimes m_2 \otimes (\one_{N_3+2}\e\otimes m_3)\bigr) \ . 
$$
We then note that the last expression coincides with the image of $\ass_{M_1[N_1],M_2[N_2],M_3[N_3+2]}\circ\varkappa$ on the left-hand side of~\eqref{eq:id-ass}.
We have thus shown explicitly (on representatives in the balanced tensor products) the equality 
$$
\funGl_N(\ass_{M_1[N_1],M_2[N_2],M_3[N_3]}) = \ass_{M_1[N_1],M_2[N_2],M_3[N_3+2]}
$$
and this finishes our proof that Def.~\ref{def:asslim} is well-defined. It is obvious by the construction that the family of isomorphisms in~\eqref{asslim} satisfies the pentagon identities because each representative does, recall Prop.~\ref{prop:ass-pentagon}.

\begin{dfn}\label{def:br-lim}
In $\catTLinf$, we similarly define  the braiding by the family of isomorphisms
\begin{equation}\label{br-TLlim}
\brlim_{M_1,M_2}: \quad M_1\fuslim M_2 \xrightarrow{\quad \cong \quad} M_2 \fuslim M_1
\end{equation}
as the  equivalence  class corresponding  to $\br_{M_1,M_2}$ from $\Hom_{\catTL_N} (M_1[N_1]\fus M_2[N_2], M_2[N_2]\fus M_1[N_1])$, which is defined in~\eqref{br-TL}, for a choice of $N_1$ and $N_2$ such that both $M_1[N_1]$ and $M_2[N_2]$ are non-zero. 
\end{dfn}

Like in  the discussion below the definition of the associator in $\catTLinf$, we show that~\eqref{br-TLlim} does not depend on the choice of $(N_1,N_2)$, or  Def.~\ref{def:br-lim} is well-defined. 
For this, it is enough to show that the two braidings (denote them for brevity as) $c_1$ and $c_2$, one for $M_1[N_1]\fus M_2[N_2]$ and the second for $M_1[N_1]\fus M_2[N_2+2]$, are in the same equivalence class, or $\funGl_{N_1+N_2}(c_1)=c_2$. We check the equality by a direct calculation on the representatives in the tensor product $\funGl_{N_1+N_2}\bigl(M_1[N_1]\fus M_2[N_2]\bigr)$.

Finally, the braiding isomorphisms~\eqref{br-TLlim} satisfy the hexagon identities because each representative does, recall Prop.~\ref{prop:C-hexagon}.
We have thus proven  the following theorem.

\begin{Thm}\label{thm:catTLinf}
The direct-limit category $\catTLinf$ defined in~\eqref{catTLinf} with~\eqref{catTLinf-0} is a braided monoidal category with  the  tensor product $\fuslim$ given in~\eqref{fuslim}, with the tensor unit $\StTL0$, with the associator $\asslim_{A,B,C}$ introduced in Def.~\ref{def:asslim}
%satisfying the pentagon condition and equipped 
and with the braiding isomorphisms  $\brlim_{A,B}$ defined in Def.~\ref{def:br-lim}.
% satisfying the hexagon condition.
\end{Thm}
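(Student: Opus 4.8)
The plan is to assemble Theorem~\ref{thm:catTLinf} from the pieces already established, treating it as a bookkeeping statement about transporting the structures on $\catTL$ through the direct limit. First I would fix notation: by Corollary~\ref{cor:M1M2} the tensor product $\fuslim$ of Definition~\eqref{fuslim} is well-defined on objects, since any two representatives $M_1[N_1]\fus M_2[N_2]$ and $M_1[N_1+2n]\fus M_2[N_2+2m]$ lie in the same equivalence class; and for morphisms one uses that $\fus$ is already a bifunctor on each graded piece $\catTL_{N_1}\times\catTL_{N_2}\to\catTL_{N_1+N_2}$ (as noted just after Prop.~\ref{prop:ass}) together with the fact that $\funGl_N$ maps $\TL N$-morphisms to $\TL{N+2}$-morphisms. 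Bifunctoriality of $\fuslim$ then follows from bifunctoriality of each $\fus$ plus the compatibility in Lemma~\ref{lem:Gl-class}, which guarantees the chosen representative does not matter.

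Next I would note that the associator $\asslim$ and braiding $\brlim$ are already shown to be well-defined natural isomorphisms in Def.~\ref{def:asslim} and Def.~\ref{def:br-lim} (the independence-of-$(N_1,N_2,N_3)$ arguments there, via the explicit maps $\varkappa$ and $\funGl_N(f)=\id_{\TL{N+2}}\fus f$, are exactly the content needed). It remains only to record that the coherence axioms hold in the limit. For the pentagon: every representative $\ass_{M_1[N_1],M_2[N_2],M_3[N_3]}$ satisfies the pentagon identity by Prop.~\ref{prop:ass-pentagon}, and since the equivalence relation on $\Hom$-spaces in~\eqref{Hom-lim-def} is induced by applying the functors $\funGl_N$ — which are (strict) monoidal-compatible on morphisms in the sense that they intertwine the composition and the $\fus$-product of morphisms — the identity descends verbatim to $\catTLinf$. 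Likewise the hexagon identities pass to the limit from Prop.~\ref{prop:C-hexagon}, using the two factorization identities $\g_{N_1,N_2+N_3}=(\one_{N_2}\otimes\g_{N_1,N_3})(\g_{N_1,N_2}\otimes\one_{N_3})$ and $\g_{N_1+N_2,N_3}=(\g_{N_1,N_3}\otimes\one_{N_2})(\one_{N_1}\otimes\g_{N_2,N_3})$, which are stable under the embeddings.

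The genuinely new point — the only thing not literally inherited from $\catTL$ — is the tensor unit. Here I would argue that $\StTL 0$ is a two-sided unit for $\fuslim$. Concretely, for any $M\in\catTLinf$ with representative $M[N]$, I want natural isomorphisms $\StTL 0 \fuslim M \cong M$ and $M\fuslim \StTL 0\cong M$. Using the representative $\StTL 0[2]$ (the one-dimensional module on which all $e_i$ act by zero and which is $\e$-local up to the normalization) one computes $\StTL0[2]\fus M[N] = \TL{N+2}\otimes_{\TL 2\otimes\TL N}(\StTL0[2]\otimes M[N])$; since $\StTL0[2]$ is the trivial $\TL2$-module killed by $e_1$, this induced module is $\funGl_N(M[N]) = M[N+2]$, which is the same object as $M$ in $\catTLinf$. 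The left-unit case is symmetric using $\funGll$ and Remark~\ref{rem:funGll}. One then checks the triangle axiom relating the unitors to $\asslim$ — again this reduces to an identity on representatives, which holds because the relevant $\g$-elements and $\one$-insertions behave trivially on the unit strand. The main obstacle, and the step deserving the most care, is precisely this verification that $\StTL0$ functions as a strict-enough unit and that the unit constraints are natural and coherent: unlike the associator and braiding, it is genuinely a statement about the limit (at finite $N$ there is no grade-zero piece, so no unit exists), so one cannot simply quote a finite-$N$ proposition and must run the induction-equals-globalization computation directly. Everything else is a routine transport-of-structure argument.

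\begin{proof}
By Cor.~\ref{cor:M1M2} the assignment $(M_1,M_2)\mapsto M_1\fuslim M_2$ of~\eqref{fuslim} is well-defined on objects, and together with the bifunctoriality of each $\fus:\catTL_{N_1}\times\catTL_{N_2}\to\catTL_{N_1+N_2}$ (recalled after Prop.~\ref{prop:ass}) and Lem.~\ref{lem:Gl-class} it defines a bifunctor $\fuslim:\catTLinf\times\catTLinf\to\catTLinf$. The associator $\asslim$ and braiding $\brlim$ are well-defined natural isomorphisms by Def.~\ref{def:asslim} and Def.~\ref{def:br-lim}. Since the $\Hom$-spaces in $\catTLinf$ are quotients~\eqref{Hom-lim-def} by the relation generated by the functors $\funGl_N$, and since each $\funGl_N$ commutes with composition and with $\fus$ on morphisms, the pentagon identity for $\asslim$ follows from Prop.~\ref{prop:ass-pentagon} and the hexagon identities for $\brlim$ follow from Prop.~\ref{prop:C-hexagon} applied to any representatives.

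It remains to exhibit the tensor unit. We claim $\StTL0$, with representative $\StTL0[2]$ the one-dimensional $\TL2$-module on which $e_1$ acts by zero, is a two-sided unit. For $M[N]$ a representative of $M\in\catTLinf$,
$$
\StTL0[2]\fus M[N] = \TL{N+2}\otimes_{(\TL2\otimes\TL N)}\bigl(\StTL0[2]\otimes M[N]\bigr) \;\cong\; \TL{N+2}\,\e\otimes_{\TL N} M[N] = \funGl_N(M[N]),
$$
since tensoring with the trivial $\TL2$-module killed by $e_1$ over $\TL2$ is the same as using the idempotent $\e=\frac1m e_1$. By~\eqref{catTLinf-M} this is $M[N+2]$, the same object as $M$ in $\catTLinf$, and the natural isomorphism $\liso_M:\StTL0\fuslim M\xrightarrow{\cong}M$ is the equivalence class of $a\e\otimes v\otimes m\mapsto a\cdot m$ (with $v$ the generator of $\StTL0[2]$). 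The right unitor $\riso_M:M\fuslim\StTL0\xrightarrow{\cong}M$ is constructed symmetrically using $\StTL0[2]$ on the right, i.e.\ via $\funGll$ of Rem.~\ref{rem:funGll}, whose globalization functors agree with those of $\funGl$ on the level of objects and morphisms. Naturality of $\liso$ and $\riso$ is clear from the explicit formulas. Finally, the triangle axiom $(\id_{M_1}\fuslim\liso_{M_2})\circ\asslim_{M_1,\StTL0,M_2}=\riso_{M_1}\fuslim\id_{M_2}$ is checked on representatives: applying~\eqref{TL-ass-map} with the middle factor $\StTL0[2]$ and then the two unitors sends $a\otimes(b\otimes m_1\otimes v)\otimes m_2$ to $a\cdot b\otimes m_1\otimes m_2$ along both sides, so the two composites coincide. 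Hence $\StTL0$ is a tensor unit and $\catTLinf$ is a braided monoidal category.
\end{proof}
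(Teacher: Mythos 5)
Your overall strategy coincides with the paper's: the tensor product, associator and braiding are transported to the limit because every representative satisfies the required identities (Prop.~\ref{prop:ass-pentagon}, Prop.~\ref{prop:C-hexagon}) and the equivalence relation on $\Hom$-spaces is generated by the functors $\funGl_N$; the only genuinely new ingredient is the unit $\StTL0$, which the paper likewise handles by showing $M[N]\fus\StTL{0}[2]=\funGl_{N}(M[N])$ and invoking associativity. Your extra care with bifunctoriality, naturality of the unitors and the triangle axiom is welcome detail that the paper leaves implicit.

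There is, however, one genuine error in the unit argument: you describe $\StTL0[2]$ as ``the one-dimensional $\TL2$-module on which $e_1$ acts by zero'' and assert that ``tensoring with the trivial $\TL2$-module killed by $e_1$ over $\TL2$ is the same as using the idempotent $\e=\frac1m e_1$.'' Both statements are false. The unique link state of $\StTL0[2]$ is the single arc, and stacking $e_1$ on it closes a contractible loop, so $e_1$ acts by the scalar $m$, not by zero; the module killed by $e_1$ is $\StTL1[2]$ (two through-lines). Your reduction
$\TL{N+2}\otimes_{(\TL2\otimes\TL N)}\bigl(\StTL0[2]\otimes M[N]\bigr)\cong\TL{N+2}\,\e\otimes_{\TL N}M[N]$
works \emph{precisely because} $\e$ acts as the identity on the generator $v$ of $\StTL0[2]$ (so that $a\otimes v\otimes m=a\e\otimes v\otimes m$ can be collapsed); if $e_1$ killed $v$ the balanced tensor product would not collapse onto $\TL{N+2}\e$, and indeed $\StTL1$ is not a unit (generically $\StTL{1}\fuslim\StTL{j}=\StTL{j-1}\oplus\StTL{j}\oplus\StTL{j+1}$). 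The fix is purely local --- replace ``$e_1$ acts by zero'' by ``$e_1$ acts by $m$, equivalently $\e$ acts as the identity'' --- after which your computation and the rest of the proof go through as written.
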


We only need to comment on the tensor unit $\StTL{0}$ -- it is the equivalence class of the standard TL modules with zero number of through-lines. The tensor unit properties of $\StTL{0}$ follows from the finite TL fusion
(with even $N_1$ of course)
$$
\StTL{0}[N_1]\fus M[N_2] \cong  M[N_2]\fus \StTL{0}[N_1] \cong M[N_1+N_2]\ .
$$
It is easy to prove for $N_1=2$ that $M[N_2]\fus \StTL{0}[2]$ actually equals to $\funGl_{N_2}(M[N_2])=M[N_2+2]$ and then one proceeds by the induction in $N_1$ using $\StTL{0}[2]\fus\StTL{0}[2] = \StTL{0}[4]$ and the associativity of the fusion.

\subsection{On rigidity and non-rigidity of $\catTLinf$}

We can also introduce (right) duals for the standard objects $\StTL{j}$ in $\catTLinf$ (though not in $\catTL$ because $\catTL$ does not have the tensor unit) as follows. We introduce first contragredient objects:  for an object $M\in\catTLinf$ let us pick up its representative $M[N]$, we then define the contragredient object $M^*$ as the equivalence class corresponding to the space of linear forms $\bigl(M[N]\bigr)^*$ where the $\TL{N}$ action is given with the help of the anti-involution reflecting a TL diagram along the horizontal line in the middle of the diagram. (Note that the definition does not depend on the choice of $N$.) We then introduce right dual for each standard objects $\StTL{j}$, with $j\in\half\oN$, as the  contragredient object $\StTL{j}^*$ -- this duality is equipped with an evaluation map $\mathrm{ev}:\; \StTL{j}^*\fuslim\StTL{j}\to \StTL{0}$ and a coevaluation map  $\mathrm{coev}:\; \StTL{0} \to \StTL{j}\fuslim\StTL{j}^*$  (they can be explicitly fixed on representatives at finite $N$ using the diagrammatical formulation of the fusion) satisfying the zig-zag rules. One can similarly introduce right duals  
for any other objects which are filtered by the standard objects, e.g. for the projective covers. For generic $\q$, this gives thus duals for all objects in $\catTLinf$ and the category is actually rigid.

The problem appears at $\q$ a root of unity:
 the  contragredient objects $\IrTL{j}^*$ do not give right duals to $\IrTL{j}$ for  $0\leq j\leq \half(p-2)$. Moreover for these values of $j$, the simple objects $\IrTL{j}$ can not have duals (for our choice of the tensor unit) because the $\IrTL{j}$'s form a tensor ideal that does not contain our tensor unit $\StTL{0}$, e.g. for $p=3$ we compute directly $\IrTL{0}^*[N]\fus\IrTL{0}[N]= \IrTL{0}[2N]$ which is the simple (one-dimensional) quotient of $\StTL{0}[2N]$.  We thus conclude that $\catTLinf$ is not rigid (for $\q=e^{\rmi\pi/p}$ and  $p=3,4,5,\dots$). 

We note that the presence of such a tensor ideal that spoils the rigidity is an interesting property which is  in common with representation theory of the Virasoro algebra at critical central charges, like $c=0$, to be discussed in Sec.~\ref{sec:outlook}.

\medskip

We are now going to use the approach elaborated in this section in the case of affine TL algebra representations. Our proofs of statements in this section were designed in such a way that  the generalization to the affine case is straightforward.

\section{A semi-group affine TL category}\label{semiGaffTL}

%\subsection{Limits of affine TL categories }
Recall that in the finite TL case we used the idempotent subalgebras to construct direct sequences of TL representation categories and their limit $\catTLinf$. 
 For the affine TL case, we obtain surprisingly  analogous statement by using the same idempotent.
 
\begin{Prop}
Let $m$ be non zero. Introduce the idempotent $\e=\frac{1}{m}e_{N+1}$. Then, there is an isomorphism
$$
\psi: \quad \ATL{N}\xrightarrow{\quad \cong \quad} \e\ATL{N+2}\e,
$$
such that the generators of $\ATL{N}$ are mapped as
\begin{align}
u^{\pm1}&\mapsto m \e u^{\pm1}  \e,\label{psi-image-1}\\
e_j&\mapsto \e e_j\ , \qquad 1\leq j\leq N-1,\label{psi-image-2}\\
e_N &\mapsto m \e e_N e_{N+2} \e.\label{psi-image-3}
\end{align}
\end{Prop}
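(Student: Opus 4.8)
The plan is to exploit the presentation of $\ATL{N}$ by generators and relations (Definition~I): I will \emph{define} $\psi$ on the generators $u^{\pm1}$ and $e_j$ by the formulas \eqref{psi-image-1}--\eqref{psi-image-3}, verify that the images satisfy the defining relations \eqref{TL}--\eqref{TLpdef-u2} of $\ATL{N}$ so that $\psi$ extends to an algebra homomorphism $\ATL{N}\to\ATL{N+2}$, then check that its image is exactly $\e\ATL{N+2}\e$, and finally that it is injective. Throughout one uses that $\e=\tfrac1m e_{N+1}$ is idempotent ($e_{N+1}^2=me_{N+1}$), that $e_{N+1}$ commutes with $e_j$ for $1\le j\le N-1$, and the relations $ue_{N+1}u^{-1}=e_{N+2}$, $e_{N+1}e_{N+2}e_{N+1}=e_{N+1}$, $e_{N+1}e_Ne_{N+1}=e_{N+1}$ inside $\ATL{N+2}$ (here, as in \eqref{psi-image-3}, $e_{N+2}$ denotes the generator $e_0$ of $\ATL{N+2}$; note $\psi(u)=m\e u\e=\tfrac1m e_{N+1}ue_{N+1}$, and similarly for the other images).

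For the relation check: the relations of $\ATL{N}$ that involve only $e_1,\dots,e_{N-1}$, and their commutations with $u^{\pm1}$, reduce after absorbing the idempotents (using $[\e,e_j]=0$) to relations already valid in $\ATL{N+2}$, so these are immediate. The genuinely new cases involve $e_N$ and the second relation of \eqref{TLpdef-u2}, and are short direct computations; for instance $\psi(u)\psi(u^{-1})=\tfrac1{m^2}e_{N+1}ue_{N+1}\cdot e_{N+1}u^{-1}e_{N+1}=\tfrac1m e_{N+1}(ue_{N+1}u^{-1})e_{N+1}=\tfrac1m e_{N+1}e_{N+2}e_{N+1}=\tfrac1m e_{N+1}=\e=\psi(\one)$, and, using $ue_{N-1}u^{-1}=e_N$ in $\ATL{N}$, one obtains $\psi(u)\psi(e_{N-1})\psi(u^{-1})=\tfrac1m e_{N+1}e_Ne_{N+2}e_{N+1}=m\,\e e_Ne_{N+2}\e=\psi(e_N)$, as required; the relations $\psi(e_N)^2=m\psi(e_N)$, $\psi(e_N)\psi(e_1)\psi(e_N)=\psi(e_N)$, $\psi(e_{N-1})\psi(e_N)\psi(e_{N-1})=\psi(e_{N-1})$, $\psi(u)\psi(e_N)\psi(u^{-1})=\psi(e_1)$ and $\psi(u)^2\psi(e_{N-1})=\psi(e_1)\cdots\psi(e_{N-1})$ all fall out the same way from \eqref{TL}--\eqref{TLpdef-u2} in $\ATL{N+2}$. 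Since each generator is sent to an element of the form $\e(\cdots)\e$ and $\psi(\one)=\e$, the image of $\psi$ lies in the unital subalgebra $\e\ATL{N+2}\e$.

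To prove that $\psi$ is a bijection onto $\e\ATL{N+2}\e$, I would pass to diagrams. Given an affine $N$-diagram $d$, insert two new sites $N+1,N+2$ between sites $N$ and $1$ on both boundaries and join them by a small contractible arc on each boundary; call the result $\widetilde d$. Stacking $\widetilde{d_1}$ on $\widetilde{d_2}$ produces $\widetilde{d_1 d_2}$ together with exactly one extra contractible loop (where the two new arcs meet), so $d\mapsto\tfrac1m\widetilde d$ is an algebra homomorphism, and a few short isotopies show it agrees on generators with \eqref{psi-image-1}--\eqref{psi-image-3}, hence equals $\psi$. The $\e$-capped basis diagrams of $\ATL{N+2}$ --- those in which $N+1$ is joined to $N+2$ on each boundary --- are precisely the $\widetilde d$, since deleting the two innermost arcs and the sites $N+1,N+2$ is a well-defined inverse to $d\mapsto\widetilde d$; these diagrams are linearly independent and they span $\e\ATL{N+2}\e$ (for any basis diagram $a$, the product $e_{N+1}ae_{N+1}$ is a power of $m$ times such a diagram). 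Thus $\psi$ carries the diagram basis of $\ATL{N}$ bijectively onto a basis of $\e\ATL{N+2}\e$, and is an isomorphism. The step I expect to require the most care is exactly this last one: one must check that capping/uncapping the two extra sites is a genuine bijection of diagram bases in the \emph{affine} setting, i.e. that it is insensitive to windings of through-lines around the annulus and to non-contractible loops --- which it is, because the capping arcs are contractible and confined to a small region between sites $N$ and $1$, so they meet no other strand. This is the only point where the argument goes beyond the verbatim analogue for the finite Temperley--Lieb algebra in Section~\ref{sec:arc-tower}.
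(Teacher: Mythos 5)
Your proposal is correct and follows essentially the same route as the paper: verify the defining relations of $\ATL{N}$ on the images inside $\ATL{N+2}$ to get a homomorphism, then use the diagrammatic description of the images (capping the two extra sites with contractible arcs) to see that $\psi$ identifies the diagram basis of $\ATL{N}$ with a basis of $\e\ATL{N+2}\e$. If anything, you are more complete than the paper, which only spells out the relation check and the injectivity; your explicit argument that the capped diagrams span $\e\ATL{N+2}\e$ (surjectivity), and your remark that capping is insensitive to windings and non-contractible loops in the affine setting, are welcome additions.
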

\begin{proof}
To prove that $\psi$ is a homomorphism of algebras is a straightforward use of the relations in $\ATL{N+2}$. For instance, under the mapping we have
\begin{eqnarray}
u^2 e_{N-1}&\mapsto& m^{-1} e_{N+1} u e_{N+1}ue_{N+1}e_{N-1}\nonumber\\
&=&m^{-1} e_{N+1} e_{N+2}u^2e_{N+1}e_{N-1}\nonumber\\
&=& m^{-1} e_1e_2\ldots e_{N-1} e_{N+1}\label{checku2}
\end{eqnarray}
where the first relation  in (\ref{TLpdef-u2}) (but for  $\ATL{N+2}$) was used to go from the first to the second line, and similarly the second relation in (\ref{TLpdef-u2})  was used to go from the second  to the third line. Meanwhile, it is easy to check that
\begin{equation}
e_1\ldots e_{N-1}\mapsto  m^{-1} e_1e_2\ldots e_{N-1} e_{N+1}
\end{equation}
as well, hence checking (\ref{TLpdef-u2}) in the image of $\ATL{N+2}$.

 To prove that the kernel of $\psi$ is zero, we use the graphical representation of the images in~\eqref{psi-image-1}-\eqref{psi-image-3}, some of which are shown below for $N=4$: 
 
 \begin{equation}\label{u1-def}
\begin{tikzpicture}

 \node[font=\large]  at (-1.5,-.7) {\mbox{} $u=$ \mbox{}\qquad};

\node[font=\large]  at (3.0,-.7) {\mbox{}\qquad \(\mapsto~~{1\over m}\)};
%%%%%%%%%%%%	
	\draw[thick] (0.3,0.) arc (0:10:20 and -3.75);
	\draw[thick] (2.1,-1.31) arc (0:10.6:-30 and 3.75);

	\draw[thick] (0.9,0.) arc (0:10:40 and -7.6);
	\draw[thick] (1.5,0.) arc (0:10:40 and -7.6);
	\draw[thick] (2.1,0.) arc (0:10:40 and -7.6);

\draw[thick] (5.3,1) arc (0:10:20 and -3.75);

	\draw[thick] (5.9,0.5) arc (0:10:40 and -7.6);
	\draw[thick] (6.5,0.5) arc (0:10:40 and -7.6);
	\draw[thick] (7.1,0.5) arc (0:10:40 and -7.6);
\draw[thick] (7.4,-0.1) arc (0:10:20 and -3.75);

\draw[thick] (8.,-.1) arc (0:180:0.3 and 0.45);

\draw[thick] (8.3,-0.65) arc (0:10.6:-30 and 3.75);

\draw[thick] (8.2,1) arc (0:180:0.3 and -0.45);

\draw[thick] (8.,0.) arc (0:10:20 and -3.75);

\draw[thick] (5.9,0.5) -- (5.9,1);

\draw[thick] (6.5,0.5) -- (6.5,1);
\draw[thick] (7.1,0.5) -- (7.1,1);

\draw[thick] (5.3,-0.8) -- (5.3,-2.5);
\draw[thick] (5.9,-0.8) -- (5.9,-2.5);
\draw[thick] (6.5,-0.8) -- (6.5,-2.5);

\draw[thick] (7.1,-0.75) -- (7.1,-2.5);

\draw[thick] (8.3,-1) arc (0:180:0.3 and -0.45);
\draw[thick] (8.3,-1) -- (8.3,-0.6);

\draw[thick] (7.7,-1) -- (7.7,-0.65);

\draw[thick] (8.2,-2.2) arc (0:180:0.3 and 0.45);
\draw[thick] (8.2,-2.2) -- (8.2,-2.5);
\draw[thick] (7.6,-2.2) -- (7.6,-2.5);

\end{tikzpicture}
\end{equation}

\begin{equation}\label{u2-def}
 \begin{tikzpicture}
        \node[font=\large]  at (-10,1) {$e_N~=~$};
%\node[font=\large]  at (-2.5,0.2) {\mbox{}\qquad \(=\)};
     
   \draw[thick] (-8.2,0.5) .. controls (-8.3,0.68)   .. (-8.5,0.7);
   \draw[thick] (-8.2,-0.2) -- (-8.2,0.5);
  \draw[thick] (-8.2,1.4) -- (-8.2,2.1);
       \draw[thick] (-8.5,1.1) .. controls (-8.35,1.15)   .. (-8.2,1.4);
      
        \draw[thick] (-6.5,-0.2) -- (-6.5,0.5);
          \draw[thick] (-6.5,1.4) -- (-6.5,2.1);
       \draw[thick] (-6.5,0.5) .. controls (-6.4,0.68)   .. (-6.2,0.7);
         \draw[thick] (-6.5,1.4) .. controls (-6.4,1.15)   .. (-6.2,1.15);

\draw[thick] (-7.6,-0.2) -- (-7.6,2.1);
\draw[thick] (-7.,-0.2) -- (-7.,2.1);
  \node[font=\large]  at (-5,1) {$\mapsto~~{1\over m}$};
  
    \draw[thick] (-3.2,-0.1) .. controls (-3.3,0.08)   .. (-3.5,0.1);
   \draw[thick] (-3.2,-0.1) -- (-3.2,-2.2);
  \draw[thick] (-3.2,0.8) -- (-3.2,4.2);
       \draw[thick] (-3.5,0.5) .. controls (-3.35,0.55)   .. (-3.2,0.8);

\draw[thick] (-2.6,-2.2) -- (-2.6,4.2);
\draw[thick] (-2.,-2.2) -- (-2.,4.2);

  \draw[thick] (-0.3,-0.8) -- (-0.3,-0.1);
          \draw[thick] (-0.3,0.8) -- (-0.3,2.8);
      \draw[thick] (-0.3,-0.1) .. controls (-0.2,0.08)   .. (0,0.1);
         \draw[thick] (-0.3,0.8) .. controls (-0.2,0.55)   .. (0,0.55);

\draw[thick] (-1.4,-2.2) -- (-1.4,1.5);
\draw[thick] (-1.4,2.7) -- (-1.4,4.2);
\draw[thick] (-0.8,-0.8) -- (-0.8,1.5);

\draw[thick] (-0.3,-0.8) arc (0:180:0.25 and -0.5);

\draw[thick] (-0.3,-2.2) arc (0:180:0.25 and 0.5);

%%%%%%%%%%%%%%%%%%%%

\draw[thick] (-0.8,1.5) arc (0:180:0.3 and 0.5);

\draw[thick] (-0.8,2.8) arc (0:180:0.3 and -0.5);

\draw[thick] (-0.3,2.8) arc (0:180:0.25 and 0.5);

\draw[thick] (-0.3,4.2) arc (0:180:0.25 and -0.5);

\end{tikzpicture}
\end{equation}

\end{proof}

We can thus consider $\ATL{N}$ as the idempotent subalgebra in $\ATL{N+2}$. This allows us similarly to Sec.~\ref{sec:arc-tower} to define two functors between the categories of affine TL modules. Let $\catATL_N$ denotes the category of finite-dimensional $\ATL{N}$-modules. We introduce the localisation functor
 \begin{equation}\label{loc-fun-a}
 \afunLoc_N: \quad \catATL_{N+2} \to \catATL_{N}\qquad \text{such that}\qquad M \mapsto \e M \ ,
 \end{equation} 
 with an obvious  map on morphisms, and its right inverse, so called globalisation functor
 \begin{equation}\label{glob-fun}
 \afunGl_N: \quad \catATL_{N} \to \catATL_{N+2}\qquad \text{such that}\qquad M \mapsto \ATL{N+2}\e \otimes_{\ATL{N}} M\ .
 \end{equation} 

The composition $\afunLoc_N\circ \afunGl_N$ is naturally isomorphic to the identity functor on $\catATL_N$. 
Similarly to Prop.~\ref{prop:Ie},  for the reverse composition we establish the analogous result for $\ATL{N}$ (the proof is just a repetition of the proof of Prop.~\ref{prop:Ie} replacing $\TL{N}$ by $\ATL{N}$, etc.)

\begin{Prop}\label{prop:Ie-affine}
The composition $\afunGl_N\circ \afunLoc_N$ maps a $\ATL{N+2}$-module $M$ to $I_{\es}\cdot M$, where $I_{\es}$ is the two-sided ideal generated by $\e$ in $\ATL{N+2}$.
\end{Prop}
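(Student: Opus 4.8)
The plan is to transcribe the proof of Prop.~\ref{prop:Ie} essentially verbatim, replacing $\TL{N}$ by $\ATL{N}$ throughout. The argument there is purely formal — it only manipulates balanced tensor products over an idempotent subalgebra and never invokes finite-dimensionality — so the fact that $\ATL{N}$ is infinite-dimensional causes no trouble at all. The single structural input I need is the isomorphism $\psi:\ATL{N}\xrightarrow{\cong}\e\ATL{N+2}\e$ of the preceding Proposition, which lets me regard $\ATL{N}$ as the idempotent subalgebra $\e\ATL{N+2}\e\subset\ATL{N+2}$, exactly as $\TL{N}=\e\TL{N+2}\e$ was used in the finite case, and exactly what makes $\afunLoc_N$ and $\afunGl_N$ well defined to begin with.

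Concretely I would proceed as follows. First, write $M\cong\ATL{N+2}\otimes_{\ATL{N+2}}M$ and unfold the composition as $\afunGl_N\circ\afunLoc_N(M)=\ATL{N+2}\e\otimes_{\ATL{N}}\bigl(\e\ATL{N+2}\otimes_{\ATL{N+2}}M\bigr)$, where the outer balanced tensor product is over $\ATL{N}=\e\ATL{N+2}\e$. Second, use associativity of $\otimes$ over algebras to regroup this as $\bigl(\ATL{N+2}\e\otimes_{\ATL{N}}\e\ATL{N+2}\bigr)\otimes_{\ATL{N+2}}M$. Third, exhibit the $\ATL{N+2}$-bimodule isomorphism $\ATL{N+2}\e\otimes_{\ATL{N}}\e\ATL{N+2}\xrightarrow{\quad\cong\quad}\ATL{N+2}\cdot\e\cdot\ATL{N+2}=I_{\es}$ given by $a\e\otimes\e b\mapsto a\e b$, whose inverse sends a representative $a\e b$ of an element of $I_{\es}$ to $a\e\otimes\e b$. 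Fourth, conclude $\afunGl_N\circ\afunLoc_N(M)\cong I_{\es}\otimes_{\ATL{N+2}}M=I_{\es}\cdot M$, and observe that on both sides the $\ATL{N+2}$-action is just left multiplication, so the two modules are literally equal, not merely isomorphic.

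The only point requiring an actual line of verification — the same one as in Prop.~\ref{prop:Ie} — is that the inverse map in the third step is well defined, i.e.\ independent of the (non-unique) way of writing an element of $I_{\es}$ as $a\e b$: if $a\e b=a'\e d$ with $a'=a\e c$ and $b=c\e d$, then inside the balanced tensor product over $\e\ATL{N+2}\e=\ATL{N}$ one has $a\e\otimes\e b=a\e\otimes\e c\e d=a\e c\e\otimes\e d=a'\e\otimes\e d$, so the assignment is consistent. I do not expect any genuine obstacle here: the substance of the statement has already been absorbed into the preceding Proposition (that $\psi$ really is an algebra isomorphism onto $\e\ATL{N+2}\e$), and what is left is bookkeeping with balanced tensor products. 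If anything, the only care needed is to keep the $\ATL{N+2}$-actions explicit throughout so that the final step yields equality of modules rather than just an isomorphism.
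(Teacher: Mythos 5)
Your proposal is correct and follows exactly the route the paper takes: the paper's proof of this proposition is literally declared to be "a repetition of the proof of Prop.~\ref{prop:Ie} replacing $\TL{N}$ by $\ATL{N}$," and your transcription reproduces all the relevant steps, including the well-definedness check for the inverse of $a\e\otimes\e b\mapsto a\e b$ and the observation that the actions agree on the nose. Nothing is missing.
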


\begin{rem}\label{rem:Ie-affine-span}
Note that the ideal $I_{\es}$  generated by $\e$ in $\ATL{N+2}$ is spanned by all affine TL diagrams except the powers of the translation generator $u^n$, for $n\in\oZ$. This is easy to see in terms of the generators of the subalgebra $I_\es$: all $e_i$, with $i\in\oZ_{N+2}$, are in $I_{\es}$.
\end{rem}

We study then properties of the two functors $\afunLoc_N$  and $\afunGl_N$  with respect to the standard modules introduced in Sec.~\ref{sec:Wz}.
As in the finite TL case, the localisation functors $\afunLoc_N$  send the standard (resp., costandard) modules to the standard (resp., costandard) modules of the same weight $(j,z)$.

\begin{Prop}\label{prop:eWz}
For any non-zero $\q$, the  $\ATL{N}$-module $\e \StJTL{j}{z}[N+2]$ is equal to $\StJTL{j}{z}[N]$ for $j\leq N/2$ and 0 otherwise.
\end{Prop}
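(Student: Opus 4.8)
The plan is to mimic the elementary proof of Proposition~\ref{prop:eW} almost verbatim, replacing finite TL link states by affine link states and tracking the extra winding data $z$. Recall that $\StJTL{j}{z}[N+2]$ has a basis of affine link states on $N+2$ points with $2j$ through-lines, and the idempotent $\e=\frac1m e_{N+1}$ acts diagrammatically by capping the points $N+1$ and $N+2$ with an arc on the bottom and removing the closed loop (dividing by $m$). I would argue that $\e$ projects onto exactly those link states on $N+2$ points in which the points $N+1$ and $N+2$ are \emph{joined to each other by an arc}; composing with $\e$ from above and below then identifies this subspace with the space of affine link states on the first $N$ points, i.e.\ with $\StJTL{j}{z}[N]$.

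First I would set up the map $\rho:\StJTL{j}{z}[N]\to\e\StJTL{j}{z}[N+2]$ that adds the arc $(N+1,N+2)$ at the bottom of an affine link state on $N$ points; this is manifestly injective and, because the added arc does not wind around the annulus and does not interact with through-lines, it commutes with the $\ATL{N}$-action given by~\eqref{psi-image-1}--\eqref{psi-image-3}. In particular the power $u^{(N)}$ of the translation generator acts on the $2j$ through-lines with the same winding factor $z$, so the parameter $z$ is preserved. Second, I would establish the reverse inclusion $\e\StJTL{j}{z}[N+2]\subset\StJTL{j}{z}[N]$ by the same case analysis as in Prop.~\ref{prop:eW}: given an affine link state $v$ on $N+2$ points, look at how the last two points are occupied. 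If both carry through-lines, then $\e v=0$. If neither does, the arc of $e_{N+1}$ either closes a loop against an existing arc (removed at the price of $m$, cancelling the $1/m$ in $\e$) or reroutes two of the first $N$ points, yielding a link state on $N$ points with the same number $2j$ of through-lines and the arc $(N+1,N+2)$ tacked on. If exactly one of the last two points carries a through-line — necessarily the point $N+2$, since in an affine link state arcs are non-crossing and $N+1$ would have to connect to something — the action of $\e$ slides that through-line back to whatever point the $N+1$st site was arced to, again producing a link state on $N$ points with $2j$ through-lines plus the arc $(N+1,N+2)$. Throughout, any non-contractible loop produced (possible only in the $j=0$ sector) is weighted by $z+z^{-1}$ exactly as in the definition of $\StJTL{0}{z}[N]$, so again $z$ is unchanged. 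The two inclusions give a bijection, and since both $\rho$ and its inverse are $\ATL{N}$-module maps (and $\rho$ is literally the identity once one fixes the obvious diagrammatic bases), we get $\e\StJTL{j}{z}[N+2]=\StJTL{j}{z}[N]$ for $j\le N/2$, and $0$ when $2j>N$ since then no affine link state on $N+2$ points with $2j$ through-lines can have the pair $(N+1,N+2)$ capped.

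Alternatively, and more cheaply, one can invoke a cellular/quasi-hereditary argument: $\ATL{N}$ is a cellular (indeed affine cellular in the sense of Koenig--Xi, or one can work with the finite-dimensional quotients that appear in any fixed module category) algebra with cell modules $\StJTL{j}{z}$, the idempotent $\e$ lies in a cell chain, and $\e\ATL{N+2}\e\cong\ATL{N}$ respects the cell structure; the image of a cell module under the corresponding localisation functor is again a cell module of the same label, by the general theory (as cited in the excerpt via~\cite{MW}). I would mention this as the conceptual reason but give the diagrammatic proof as the self-contained argument.

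The main obstacle is the bookkeeping of the winding number $z$ and of non-contractible loops, which have no analogue in the finite case: one must check that the diagrammatic operations performed by $\e$ (capping, rerouting, sliding a through-line) never change the winding class of the surviving through-lines and never create or destroy a non-contractible loop in a way that is not already accounted for by the factor $z+z^{-1}$ in the $j=0$ standard module. Concretely this means verifying that the arc of $e_{N+1}$ can be isotoped off the ``seam'' of the annulus at the price of $1$, which is where the relations $g_i g_{i+1} e_i = e_{i+1}e_i$ and the diagrammatic reductions emphasised after~\eqref{two-e0-2} are used; once that is granted the rest is the same finite combinatorics as in Prop.~\ref{prop:eW}.
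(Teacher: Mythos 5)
Your overall strategy --- a direct diagrammatic bijection between affine link states, copying the elementary proof of Prop.~\ref{prop:eW} --- is genuinely different from the paper's, and as written it has a gap exactly at the point you yourself flag as ``the main obstacle''. The paper does not attempt the affine case diagrammatically. It first restricts $\StJTL{j}{z}[N+2]$ to the finite subalgebra $\TL{N+2}$, where by Graham--Lehrer it decomposes as $\bigoplus_{k\ge j}\StTL{k}[N+2]$ (a filtration at roots of unity); since $\e$ lies in that subalgebra, Prop.~\ref{prop:eW} applies section by section and already gives $\e\StJTL{j}{z}[N+2]\cong\StJTL{j}{z'}[N]$ for \emph{some} $z'$. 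The only genuinely affine input is then a single algebraic identity, $\psi(u^N)=m^{-1}(e_{N+1}u)^N e_{N+1}=m^{-1}u^N e_1\cdots e_{N}e_{N+1}=u^{N+2}\e=z^{2j}\e$, obtained from $e_j u=ue_{j-1}$ and~\eqref{TLpdef-u2}, which pins down $z'=z$. All of the winding bookkeeping is compressed into that one computation.

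Your version must instead verify, generator by generator, that capping with the arc $(N+1,N+2)$ intertwines the action~\eqref{psi-image-1}--\eqref{psi-image-3}, and in particular that $\tfrac1m e_{N+1}u e_{N+1}$ acts on capped link states as the translation of $\StJTL{j}{z}[N]$ with the \emph{same} $z$; you assert this (``the added arc does not wind \dots\ so the parameter $z$ is preserved'') but never check it. Moreover your case analysis contains an error that suppresses precisely the problematic configurations: on the annulus it is \emph{not} true that a lone through-line among the last two points must sit at $N+2$ --- the point $N+1$ can carry the through-line while $N+2$ is joined by an arc running across the seam to the point $1$, since there is no ``rightmost point'' blocking it as in the rectangle. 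In that configuration $\e$ slides the defect across the seam, i.e.\ composes with a partial rotation of the $2j$ inner points, and this is exactly where factors of $z$ enter; likewise the claim that non-contractible loops in the $j=0$ sector are weighted by $z+z^{-1}$ ``exactly as in the definition'' needs an actual verification. Until these cases are worked out --- or the paper's restriction-to-$\TL{N+2}$ shortcut is adopted --- the identification of the parameter $z$ (the whole content of the affine statement beyond Prop.~\ref{prop:eW}) is not established.
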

\begin{proof}
We begin with determining $\e \StJTL{j}{z}[N+2]$ as a module over the  TL subalgebra $\TL{N}\subset\ATL{N}$. For this, we use results of~\cite[Sec.~2.12]{GL} and decompose (for a generic value of $\q$)
$$
\StJTL{j}{z}[N+2] = \bigoplus_{k=j}^{(N+2)/2} \StTL{k}[N+2]
$$ 
as a module over the  $\TL{N+2}$ subalgebra. For $\q$ a root of unity, the direct sum is replaced by the filtration by $\StTL{k}$ modules such that $\StTL{j}$ is a submodule in $\StJTL{j}{z}$ (it is the span of affine TL diagrams of  rank-$0$),  $\StTL{j+1}$ is a submodule in the quotient $\StJTL{j}{z}/\StTL{j}$ (it is the span  of affine TL diagrams of rank-$1$), etc.   Note then that $\e$ is in the subalgebra  $\TL{N+2}$. So, the problem reduces to the finite TL problem: we have to compute $\oplus_{k=j}^{(N+2)/2} \e \StTL{k}[N+2]$ (or, equivalently, the action of $\e$ on each section in the filtration by $\StTL{k}$'s if $\q$ is a root of unity). For this, we  use the finite TL result from Prop.~\ref{prop:eW}: $\e \StTL{k}[N+2] = \StTL{k}[N]$ and obtain 
$$
\e \StJTL{j}{z}[N+2] = \bigoplus_{k=j}^{N/2} \StTL{k}[N]
$$ 
as the $\TL{N}$ module (or the corresponding filtration by $\StTL{k}$'s for $\q$ a root of unity). We have thus $\e \StJTL{j}{z}[N+2] \cong \StJTL{j}{z'}[N]$ for some complex number $z'$.

The fact that the parameter $z'=z$  can be proven by considering powers of the translation generator. We known by definition that in $\StJTL{j}{z}[N+2]$ there is the relation $u^{N+2}=z^{2j}\one$. Consider now the image of $u^N$ in $\StJTL{j}{z}[N]$. We have
\begin{eqnarray}
u^N&\mapsto& m^{-1} (e_{N+1} u)^Ne_{N+1}\nonumber\\
&=& m^{-1} ue_N (e_{N+1}u)^{N-1}e_{N+1}\nonumber\\
&=& m^{-1} u^2 e_{N-1}e_N (e_{N+1}u)^{N-2} e_{N+1}\nonumber\\
&=&\ldots\nonumber\\
&=& m^{-1} u^N e_1e_2\ldots e_N e_{N+1}
\end{eqnarray}
where the relation $e_j u=ue_{j-1}$ in $\ATL{N+2}$ was repeatedly used. We can now replace the product of Temperley--Lieb elements on the right using the second relation in first relation  in (\ref{TLpdef-u2}) for $\ATL{N+2}$, leading to
\begin{equation}
u^N\mapsto u^{N+2} \e=z^{2j}\e
\end{equation}
as required. 
This finishes our proof of the proposition.
\end{proof}

Then, we prove the following property of the globalisation functor in the affine case, the proof  repeats the one for Prop.~\ref{prop:funGl-W} with the use of Prop.~\ref{prop:eWz}.

\begin{prop}\label{prop:afunGl-W}
For $x\leq j\leq \frac{N}{2}$ and  $x = \half(N\,\mathrm{mod}\, 2)$, we have  
\begin{equation}\label{eq:afunGl-St}
\afunGl_N: \qquad \StATL{j,z}[N]\; \mapsto \; \StATL{j,z}[N+2]\ .
\end{equation}
\end{prop}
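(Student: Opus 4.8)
The plan is to mimic the proof of Proposition~\ref{prop:funGl-W} verbatim, now using the affine ingredients that have just been assembled. Recall that the globalisation and localisation functors $\afunGl_N$ and $\afunLoc_N$ form an adjoint pair with $\afunLoc_N\circ\afunGl_N$ naturally isomorphic to the identity on $\catATL_N$, and that Proposition~\ref{prop:Ie-affine} identifies the other composition $\afunGl_N\circ\afunLoc_N$ with multiplication by the two-sided ideal $I_{\es}$ generated by $\e$ in $\ATL{N+2}$. The proof of~\eqref{eq:afunGl-St} is then a two-line argument exactly as in the finite case, relying on Proposition~\ref{prop:eWz}.

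First I would apply $\afunGl_N\circ\afunLoc_N$ to the standard module $\StJTL{j}{z}[N+2]$. By Proposition~\ref{prop:Ie-affine} this yields $I_{\es}\cdot\StJTL{j}{z}[N+2]$. Here I would invoke Remark~\ref{rem:Ie-affine-span}: the ideal $I_{\es}$ is spanned by all affine TL diagrams other than the powers $u^{n}$ of the translation generator, so $I_{\es}$ contains all the $e_i$. For $x\leq j\leq N/2$, the standard module $\StJTL{j}{z}[N+2]$ is generated (over $\ATL{N+2}$) by any of its link states by acting with TL arcs, hence $I_{\es}\cdot\StJTL{j}{z}[N+2]=\StJTL{j}{z}[N+2]$; for $j=(N+2)/2$, i.e.\ $j>N/2$, the module has no arcs at all (all $N+2$ sites carry through-lines), the $e_i$ act by zero and only powers of $u$ act nontrivially, so $I_{\es}\cdot\StJTL{j}{z}[N+2]=0$. (A small point to check carefully: that for $j\leq N/2$ the span of the images of all $e_i$ really is the whole module and not a proper submodule — but since $\StJTL{j}{z}[N+2]$ is cyclic and, for $2j<N+2$, generated by a link state containing at least one arc, acting with the corresponding $e_i$ on a diagram having a through-line in that position reproduces any chosen basis diagram up to the usual loop factors; this is the affine analogue of Remark~\ref{rem:Ie-span}.)

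Next, since $\afunLoc_N\circ\afunGl_N\cong\id_{\catATL_N}$, I would write $\afunGl_N\bigl(\StJTL{j}{z}[N]\bigr) \cong \afunGl_N\bigl(\afunLoc_N(\StJTL{j}{z}[N+2])\bigr)$ using Proposition~\ref{prop:eWz}, which gives $\afunLoc_N(\StJTL{j}{z}[N+2])=\e\,\StJTL{j}{z}[N+2]=\StJTL{j}{z}[N]$ for $j\leq N/2$. Combining this with the previous paragraph,
$$
\afunGl_N\bigl(\StJTL{j}{z}[N]\bigr)=\afunGl_N\circ\afunLoc_N\bigl(\StJTL{j}{z}[N+2]\bigr)=I_{\es}\cdot\StJTL{j}{z}[N+2]=\StJTL{j}{z}[N+2],
$$
which is exactly~\eqref{eq:afunGl-St}. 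As in the finite case, I would note that the parameter $z$ is preserved automatically because it is carried through Proposition~\ref{prop:eWz}.

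The only genuine obstacle — and it is already handled by the cited results — is the input Proposition~\ref{prop:eWz}, whose proof in turn relies on the decomposition of $\StJTL{j}{z}[N+2]$ into finite TL standard modules (or the corresponding filtration at roots of unity) from~\cite{GL} together with the finite TL statement Proposition~\ref{prop:eW}; that is where the real work lies. Given those, the proof of Proposition~\ref{prop:afunGl-W} is purely formal, a transcription of the finite-TL argument with $\TL{N}$ replaced by $\ATL{N}$ and $\StTL{j}$ replaced by $\StJTL{j}{z}$, so I expect no new difficulties here.
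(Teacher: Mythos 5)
Your proposal is correct and follows exactly the paper's own route: the paper proves this proposition by repeating the argument of Prop.~\ref{prop:funGl-W}, i.e.\ computing $\afunGl_N\circ\afunLoc_N$ on $\StJTL{j}{z}[N+2]$ via Prop.~\ref{prop:Ie-affine} and Rem.~\ref{rem:Ie-affine-span}, and then combining $\afunLoc_N\circ\afunGl_N\cong\id$ with Prop.~\ref{prop:eWz}. No discrepancies to report.
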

%\begin{proof}
%We first compute the composition $\funGl_N\circ \funLoc_N$ on the module $\StTL{j}[N+2]$ using Prop.~\ref{prop:Ie} and obtain
%\begin{equation}
%\funGl_N\circ \funLoc_N:\quad \StTL{j}[N+2]\; \mapsto\;  I_{\es} \cdot \StTL{j}[N+2]\ ,
%\end{equation}
%which is $\StTL{j}[N+2]$ for $x\leq j\leq \frac{N}{2}$ and zero otherwise, see Rem.~\ref{rem:Ie-span}.
%Then, recall that  the composition $\funLoc_N\circ \funGl_N$ is naturally isomorphic to the identity functor on $\catTL_N$.
%Together with Prop.~\ref{prop:eW}, we then obtain~\eqref{eq:funGl-St}.
%\end{proof}

\subsection{Associativity of the affine TL fusion}
Similarly to the finite TL case, we introduce  the ``enveloping'' affine TL category
\begin{equation}
\catATL = \bigoplus_{N\geq 1} \catATL_{N}
\end{equation}
where $\catATL_{N}$ is a full subcategory and there are no morphisms between the full subcategories for different $N$. The category $\catATL$ is thus graded by $\oN$. We will label  an object $M$ from $\catATL_N$ as $M[N]$ to emphasize its grade.

As in the previous section, our next step is to introduce a bilinear $\oN$-graded tensor product (bi-functor) on $\catATL$ and to show that it defines an associative tensor product  in  the  direct limit category  $\catATLinf$ introduced below. 
We will use then Prop.~\ref{prop:eWz} for studying the affine TL fusion rules in the  direct limit.

Recall that in~\eqref{fusfunc-def} we have defined the affine TL fusion bi-functor
\begin{equation}\label{fusfunc-ATL-def-0}
\afus: \quad \catATL_{N_1}\times \catATL_{N_2}\to \catATL_{N_1+N_2}
\end{equation}
on two modules $M_1$ and $M_2$ as the   induced module. It obviously respects the $\oN$ grading.
The important property of the fusion $\afus$ is the associativity.

\begin{prop}\label{prop:ass-ATL}
Let $M_1$, $M_2$ and $M_3$ be three modules over $\ATL{N_1}(m)$, $\ATL{N_2}(m)$ and $\ATL{N_3}(m)$, respectively.
The tensor product~$\afus$ is equipped with an associator, {\it i.e.,} we have a family $\aass_{M_1,M_2,M_3}$ of natural isomorphisms of $\ATL{N_1+N_2+N_3}(m)$ modules
\begin{equation}\label{fusfunc-ass}
\aass_{M_1,M_2,M_3}:\quad \bigl(M_1\afus M_2\bigr) \afus M_3 \xrightarrow{\quad \cong\quad}
M_1\afus \bigl( M_2 \afus M_3\bigr) 
\end{equation}
given explicitly as, for any triple of vectors $m_i\in M_i$, with $i=1,2,3$, 
\begin{equation}\label{ATL-ass-map}
\aass_{M_1,M_2,M_3}:\quad a\otimes(b\otimes m_1\otimes m_2)\otimes m_3 \mapsto a\cdot b\otimes \bigl(m_1\otimes (\one_{2+3}\otimes m_2\otimes m_3)\bigr)\ ,
\end{equation}
where $a\in\ATL{N_1+N_2+N_3}(m)$, $b\in\ATL{N_1+N_2}(m)$, and $\one_{2+3}$ is the identity in $\ATL{N_2+N_3}$, and $a\cdot b$ stands for the product of
the element  $a$ with the image of $b$ under the $\varepsilon_{N_1+N_2,N_3}$ embedding of $\ATL{N_1+N_2}$ into $\ATL{N_1+N_2+N_3}$ defined in~\eqref{aTL-embed}.

The isomorphisms $\aass_{M_1,M_2,M_3}$ satisfy the pentagon identity.
\end{prop}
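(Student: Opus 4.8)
The plan is to mirror the proof of Proposition~\ref{prop:ass} in the finite TL case almost verbatim, replacing $\TL{N}$ by $\ATL{N}$ and the standard embedding by the affine embedding $\afemb{N_1,N_2}$ of~\eqref{aTL-embed}. First I would exhibit $\aass_{M_1,M_2,M_3}$ as a composite $\riso^{-1}\circ\liso$ of two isomorphisms of $\ATL{N_1+N_2+N_3}$-modules. The map $\liso$ is the chain of isomorphisms
\begin{multline*}
\bigl(M_1\afus M_2\bigr) \afus M_3 = \ATL{1+2+3}\otimes_{\ATL{1+2}\otimes\ATL{3}} \left[\Bigl(\ATL{1+2}\otimes_{\ATL{1}\otimes\ATL{2}} M_1\otimes M_2\Bigr)\otimes M_3\right]\\
\xrightarrow{\ \cong\ } \ATL{1+2+3}\otimes_{\ATL{1+2}\otimes\ATL{3}} \left[\ATL{1+2}\otimes\ATL{3}\otimes_{\ATL{1}\otimes\ATL{2}\otimes\ATL{3}} (M_1\otimes M_2\otimes M_3)\right]\\
\xrightarrow{\ \cong\ } \ATL{1+2+3}\otimes_{\ATL{1}\otimes\ATL{2}\otimes\ATL{3}} (M_1\otimes M_2\otimes M_3),
\end{multline*}
where the second step writes $M_3\cong\ATL{3}\otimes_{\ATL{3}}M_3$ and rearranges tensor factors, and the third step is associativity of the balanced tensor product over rings; here $\ATL{1}\otimes\ATL{2}\otimes\ATL{3}$ is viewed as a subalgebra of $\ATL{1+2+3}$ via the iterated embedding $\afemb{N_1+N_2,N_3}\circ(\afemb{N_1,N_2}\otimes\id)$. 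One checks this agrees with $\afemb{N_1,N_2+N_3}\circ(\id\otimes\afemb{N_2,N_3})$, so that the triple tensor product is unambiguously a module over the subalgebra; this is where I expect the only genuinely new input compared with the finite case, and it follows by a short diagrammatic computation from the explicit formulas~\eqref{two-transl},~\eqref{two-e0-1},~\eqref{two-e0-2} using the braid relations. On elements $\liso$ reads $a\otimes(b\otimes m_1\otimes m_2)\otimes m_3 \mapsto a\cdot b\otimes(m_1\otimes m_2\otimes m_3)$, with $a\cdot b$ the product in $\ATL{1+2+3}$ of $a$ with the image of $b\in\ATL{1+2}$. Analogously $\riso$ sends the right-hand side of~\eqref{fusfunc-ass} to the same triple-induced module via $a\otimes(m_1\otimes(c\otimes m_2\otimes m_3))\mapsto a\cdot c\otimes(m_1\otimes m_2\otimes m_3)$ for $c\in\ATL{N_2+N_3}$, and $\aass_{M_1,M_2,M_3}=\riso^{-1}\circ\liso$ gives exactly~\eqref{ATL-ass-map}. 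Naturality in the $M_i$ is immediate since the formula is defined on representatives by left multiplication and does not touch the module elements beyond the obvious functoriality.

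For the pentagon identity the plan is to repeat the computation of Proposition~\ref{prop:ass-pentagon} verbatim. I would write a general element of $((M_1\afus M_2)\afus M_3)\afus M_4$ as $a\otimes(b\otimes(c\otimes m_1\otimes m_2)\otimes m_3)\otimes m_4$ with $a\in\ATL{1+2+3+4}$, $b\in\ATL{1+2+3}$, $c\in\ATL{1+2}$, apply the two sides of~\eqref{eq:pentagon} using~\eqref{ATL-ass-map}, and observe that both produce
$$
a\cdot b\cdot c\otimes m_1\otimes\bigl(\one_{2+3+4}\otimes m_2\otimes(\one_{3+4}\otimes m_3\otimes m_4)\bigr).
$$
The only thing to verify is that the products $a\cdot b\cdot c$ are computed consistently, i.e. that the various iterated embeddings $\afemb{-,-}$ of $\ATL{1+2}$, $\ATL{1+2+3}$, $\ATL{2+3+4}$, $\ATL{3+4}$ into $\ATL{1+2+3+4}$ fit together associatively; this reduces to the same diagrammatic compatibility of $\afemb{-,-}$ used above, applied to four blocks instead of three. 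The main obstacle is precisely this coherence of the affine embeddings: unlike the finite TL case, where the standard embedding is ``obviously'' associative, here one must check that inserting the braided strands for $u^{(i)}$ and $e_0^{(i)}$ in the prescribed over/under pattern is compatible with regrouping blocks, which amounts to a finite sequence of applications of the braid relations~\eqref{br-rel} and the identities $g_ig_{i+1}e_i=e_{i+1}e_i$ recorded after~\eqref{comm-transl}. Once that is established the rest is purely formal, identical to Sec.~\ref{sec:ass-TL}, so I would phrase the proof as ``the argument of Prop.~\ref{prop:ass} and Prop.~\ref{prop:ass-pentagon} applies, replacing the standard embedding by $\afemb{N_1,N_2}$ and using the associativity of the affine embedding checked in Sec.~\ref{sec:aTL-emb}.''
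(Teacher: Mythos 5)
Your proposal matches the paper's proof essentially verbatim: the paper also defines $\aass$ as $\riso^{-1}\circ\liso$ by rerunning the argument of Prop.~\ref{prop:ass} with $\TL{N}$ replaced by $\ATL{N}$, and it likewise isolates as the only new ingredient the check that the two iterated embeddings $\afemb{N_1+N_2,N_3}\circ(\afemb{N_1,N_2}\otimes\id)$ and $\afemb{N_1,N_2+N_3}\circ(\id\otimes\afemb{N_2,N_3})$ give the same subalgebra of $\ATL{N_1+N_2+N_3}$ (a "non-trivial but simple check" in the affine case), with the pentagon then handled exactly as in Prop.~\ref{prop:ass-pentagon}.
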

%\begin{prop}\label{prop:ass-ATL}
%Let $M_1$, $M_2$ and $M_3$ be three modules over $\ATL{N_1}(m)$, $\ATL{N_2}(m)$ and $\ATL{N_3}(m)$ respectively.
%\textit{The fusion functor}~$\afus$ is equipped with an associator, {\it i.e.} we have a family of natural isomorphisms
%\begin{equation}\label{fusfunc-ass}
%\bigl(M_1\afus M_2\bigr) \afus M_3 \cong
%M_1\afus \bigl( M_2 \afus M_3\bigr) ,
%\end{equation}
%\end{prop}
\begin{proof}
The proof essentially repeats the poof of Prop.~\ref{prop:ass}: we replace $\TL{N}$ by $\ATL{N}$ and $\fus$ by $\afus$, and our manipulations in~\eqref{ass:3lines-l} with the balanced tensor products are valid for the infinite-dimensional algebras, and define the maps $\liso$ and $\riso$ as in~\eqref{eq:liso-def} and~\eqref{eq:riso-def}. The map $\liso$ gives an isomorphism from $\bigl(M_1\afus M_2\bigr) \afus M_3$ to $\ATL{1+2+3}\otimes_{\ATL1\otimes\ATL2\otimes\ATL3}  (M_1\otimes M_2\otimes M_3)$ where $\ATL1\otimes\ATL2\otimes\ATL3\equiv \ATL{N_1}\otimes\ATL{N_2}\otimes\ATL{N_3}$ is considered as the subalgebra in $\ATL{N_1+N_2+N_3}$ under the composition $\varepsilon_{N_1+N_2,N_3}\circ(\varepsilon_{N_1,N_2}\tensor\id)$, recall the definition~\eqref{aTL-embed}.
Similarly the map $\riso$ gives  an isomorphism from the other bracketing $M_1\afus \bigl(M_2 \afus M_3\bigr)$ to $\ATL{1+2+3}\otimes_{\ATL1\otimes\ATL2\otimes\ATL3}  (M_1\otimes M_2\otimes M_3)$ where now $\ATL{N_1}\otimes\ATL{N_2}\otimes\ATL{N_3}$ is considered as the subalgebra in $\ATL{N_1+N_2+N_3}$ under a different composition $\varepsilon_{N_1,N_2+N_3}\circ(\id\tensor\varepsilon_{N_2,N_3})$. We finally note that both the compositions have identical images (this is trivial in the finite TL case and a non-trivial but simple check for the affine algebras). Therefore we can define the composition $\riso^{-1}\circ \liso$ and it gives the associativity isomorphism $\aass_{M_1,M_2,M_3}$.
The pentagon identity is proven along the same lines as in Prop.~\ref{prop:ass-pentagon}.
%To prove the existence of an isomorphism in~\eqref{fusfunc-ass}, we rewrite the left-hand side as
%\begin{multline}\label{ass:3lines}
%\bigl(M_1\afus M_2\bigr) \afus M_3  = \ATL{1+2+3}\otimes_{\ATL{1+2}\otimes\ATL3} \left[\Bigl(\ATL{1+2}\otimes_{\ATL1\otimes\ATL2} M_1\otimes M_2\Bigr)\otimes M_3\right]\\
%\cong \ATL{1+2+3}\otimes_{\ATL{1+2}\otimes\ATL3} \left[\Bigl(\ATL{1+2}\otimes \ATL3\otimes_{\ATL1\otimes\ATL2\otimes\ATL3} (M_1\otimes M_2\otimes M_3)\Bigr)\right]\\
%\cong \ATL{1+2+3}\otimes_{\ATL1\otimes\ATL2\otimes\ATL3}  (M_1\otimes M_2\otimes M_3),
%\end{multline}
%where we use the short-hand notations $\ATL{i}\equiv\ATL{N_i}(m)$, $\ATL{i+j}\equiv\ATL{N_i+N_j}(m)$, etc.; the first line is by definition, in the second line we used that $M_3 = \ATL3\otimes_{\ATL3}M_3$ and an obvious rearrangement of the tensor factors, to establish an isomorphism in the third line we used the associativity of the tensor product over rings --- the final result is obvious then.
%In the third line, we consider the tensor product $M_1\tensor M_2\tensor M_3$ as the module over
% the algebra $\ATL{1}\tensor\ATL{2}\tensor\ATL{3}$ which is considered as a subalgebra in $\ATL{1+2+3}$, by the embedding~\eqref{aTL-embed}. The right-hand side of~\eqref{fusfunc-ass} can be rewritten in a similar way (using the identity $M_1 = \ATL1\otimes_{\ATL1}M_1$, etc.) and it is isomorphic to the third line in~\eqref{ass:3lines} as well. This finishes the proof of the proposition.
\end{proof}

We have thus shown the following.
\begin{prop}
Let $\afus$  denote the $\oN$-graded bilinear tensor product on $\catATL$ as defined for each pair $(N_1,N_2)\in\oN\times\oN$ in~\eqref{fusfunc-ATL-def-0} and~\eqref{fusfunc-def}. It is equipped with the associator $\alpha$ from Prop.~\ref{prop:ass-ATL} and it satisfies  the pentagon identity. The category $\catATL$ is thus an  $\oN$-graded semi-group category.
\end{prop}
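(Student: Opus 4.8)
The plan is to assemble the proposition from results already established in the excerpt, essentially as a bookkeeping statement. The tensor product $\afus$ was defined in Definition~\ref{afus-def} via induction along the embedding $\varepsilon_{N_1,N_2}$ of~\eqref{aTL-embed}, and it manifestly sends a pair of objects in $\catATL_{N_1}\times\catATL_{N_2}$ to an object in $\catATL_{N_1+N_2}$, hence respects the $\oN$-grading: this is immediate and needs only a sentence. The associator $\alpha$ together with the pentagon identity is exactly the content of Prop.~\ref{prop:ass-ATL}, so nothing new has to be proved there either. What remains is to observe that these data together satisfy the axioms of a semi-group category in the sense introduced after Prop.~\ref{prop:N-graded} (a monoidal category with the tensor-unit axioms dropped), and to point out that, as in the finite case, there is no grade-zero component and hence no tensor unit.

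Concretely, I would first recall that $\catATL=\bigoplus_{N\geq 1}\catATL_N$ is $\oC$-linear abelian, with no morphisms between distinct graded pieces, so that a bilinear bi-functor is the same as a compatible family of bi-functors on the graded components; the family $\afus\colon\catATL_{N_1}\times\catATL_{N_2}\to\catATL_{N_1+N_2}$ from~\eqref{fusfunc-ATL-def-0}--\eqref{fusfunc-def} is such a family, and it is bilinear because induction $\ATL{N}\otimes_{(\ATL{N_1}\otimes\ATL{N_2})}(-)$ is additive in each factor of $M_1\otimes M_2$. Then I would simply cite Prop.~\ref{prop:ass-ATL} for the natural family $\aass_{M_1,M_2,M_3}$ and the pentagon identity~\eqref{eq:pentagon} (with $\fus$ replaced by $\afus$ and $\TL{}$ by $\ATL{}$ throughout). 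Finally I would remark that the absence of an $N=0$ component means $\catATL$ has no tensor unit, which is precisely why we only claim a semi-group category rather than a monoidal one.

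The only thing requiring a line of genuine argument — and the closest thing to an obstacle — is the coherence of the associator under the grading: one must check that the vertices and edges of the pentagon for $\afus$ live in the correct graded component $\catATL_{N_1+N_2+N_3+N_4}$, which holds because all four bracketings of $M_1\afus M_2\afus M_3\afus M_4$ are objects of $\catATL_{N_1+N_2+N_3+N_4}$ by the grading property, and $\aass$ is built from the embeddings $\varepsilon$ which are compatible with the additive structure of the indices (this is the ``non-trivial but simple check'' already invoked in the proof of Prop.~\ref{prop:ass-ATL} that the two composite embeddings $\varepsilon_{N_1+N_2,N_3}\circ(\varepsilon_{N_1,N_2}\otimes\id)$ and $\varepsilon_{N_1,N_2+N_3}\circ(\id\otimes\varepsilon_{N_2,N_3})$ have the same image). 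Once that is noted, the statement follows. I would therefore keep the proof very short, essentially: bilinearity of induction, grading, then invoke Prop.~\ref{prop:ass-ATL}.

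\begin{proof}
The bi-functor $\afus$ of~\eqref{fusfunc-ATL-def-0}--\eqref{fusfunc-def} sends $\catATL_{N_1}\times\catATL_{N_2}$ into $\catATL_{N_1+N_2}$ by its very definition as induction from the subalgebra $\varepsilon_{N_1,N_2}(\ATL{N_1}\otimes\ATL{N_2})\subset\ATL{N_1+N_2}$, so it is $\oN$-graded; it is bilinear because induction $\ATL{N_1+N_2}\otimes_{(\ATL{N_1}\otimes\ATL{N_2})}(-)$ is additive in each tensor factor of $M_1\otimes M_2$. By Prop.~\ref{prop:ass-ATL} the family $\aass_{M_1,M_2,M_3}$ of natural isomorphisms of~\eqref{fusfunc-ass}--\eqref{ATL-ass-map} is an associator for $\afus$ and satisfies the pentagon identity (the argument of Prop.~\ref{prop:ass-pentagon} applies verbatim after replacing $\TL{}$ by $\ATL{}$ and $\fus$ by $\afus$, since all four bracketings of $M_1\afus M_2\afus M_3\afus M_4$ lie in $\catATL_{N_1+N_2+N_3+N_4}$ and the maps are built from the embeddings $\varepsilon$). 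Since $\catATL$ has no grade-zero component, it has no tensor unit, so $(\catATL,\afus,\aass)$ is an $\oN$-graded semi-group category.
\end{proof}
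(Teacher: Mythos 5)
Your proposal is correct and follows essentially the same route as the paper, which states this proposition as an immediate consequence of Prop.~\ref{prop:ass-ATL} (``We have thus shown the following'') without further argument. Your bookkeeping — grading and bilinearity from the definition of induction, associator and pentagon from Prop.~\ref{prop:ass-ATL}, absence of a tensor unit from the absence of a grade-zero component — is exactly what the paper intends.
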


\subsection{The direct-limit category $\catATLinf$}
Inside the enveloping category $\catATL$, we consider two direct systems (recall the definition~\eqref{eq:dir-lim} with $C_i=\catATL_i$ and $F_{ij}=\afunGl_{j-2}\circ\ldots \afunGl_{i+2}\circ\afunGl_{i}$,  $i\leq j$)
$$
\catATL_1 \xrightarrow{\;\; \afunGl_1 \;\;} \catATL_3  \xrightarrow{\;\; \afunGl_3 \;\;} \ldots
$$
and 
$$
\catATL_2 \xrightarrow{\;\; \afunGl_2 \;\;} \catATL_4  \xrightarrow{\;\; \afunGl_4 \;\;} \ldots\ .
$$
We denote the corresponding direct limits as 
\begin{equation}\label{catATLinf-0}
\catATLodd = \varinjlim \catATL_{\mathrm{odd}}
\qquad
\text{ and } 
\qquad
 \catATLev = \varinjlim \catATL_{\mathrm{even}}\ .
 \end{equation}
Then, we define the category
\begin{equation}\label{catATLinf}
\catATLinf = \catATLev \oplus \catATLodd.
\end{equation}
Note that by the construction the category $\catATLinf$ is an abelian $\oC$-linear category for any non-zero value of $\q$ (except $\q=\pm\rmi$ where our construction is not defined).

\subsection{Objects in $\catATLinf$}
\label{sec:st-proj-hatCinf}

Prop.~\ref{prop:afunGl-W} suggests the following numeration of standard objects in the direct limit $\catATLinf$:
\begin{equation}\label{catATLinf-Wj}
\StATL{j,z} \; \equiv \; \{\, \StATL{j,z}[2j],\, \StATL{j,z}[2j+2],\, \ldots,\, \StATL{j,z}[N], \, \ldots\, \}\ , \qquad j\in\half\oN, \; z\in\oC^\times\ ,
\end{equation}
where the set in the figure brackets is the equivalence class of the objects in $\catATL$ mapped by the globalisation functors $\funGl_N$ for different values of $N$. 
%We introduce similarly 
In general, for any non-zero $\ATL{N}$-mo\-du\-le $M[N]$ such that its localisation is zero, we define
\begin{equation}\label{catTLinf-M}
M \; \equiv \; \{\, M[N],\, M[N+2],\, \ldots\, \},\quad \text{with}\quad M[N+2]\equiv \afunGl_N (M[N]),\quad \text{etc.,}
\end{equation}
as the corresponding equivalence class in the direct limit $\catATLinf$.

\begin{Lemma}\label{lem:aGl-class}
Let $M_1[N_1]$ and  $M_2[N_2]$ be affine TL modules. 
Then, the affine TL modules $M_1[N_1]\afus M_2[N_2]$ and $M_1[N_1]\afus M_2[N_2+2]$ are in the same equivalence class in $\catATLinf$ or, equivalently, we have
\begin{equation}\label{eq:aGl-class}
\afunGl_{N_1+N_2}:\qquad M_1[N_1]\afus M_2[N_2] \; \mapsto M_1[N_1]\afus M_2[N_2+2]\ . 
\end{equation}
\end{Lemma}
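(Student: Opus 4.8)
The plan is to mirror the proof of Lemma~\ref{lem:Gl-class} from the finite TL case almost verbatim, replacing $\TL{N}$ by $\ATL{N}$ and $\fus$ by $\afus$ throughout; the only genuinely new ingredient is that the embedding $\varepsilon_{N_1,N_2}$ of Sec.~\ref{AffTLembed} is more subtle than the standard embedding, so I must check that the relevant identifications of subalgebras still go through. First I would write out, for $N=N_1+N_2$,
$$
M_1[N_1]\afus M_2[N_2] = \ATL{N}\otimes_{(\ATL{N_1}\otimes\ATL{N_2})} M_1[N_1]\otimes M_2[N_2]
$$
and apply $\afunGl_N$ to get $\ATL{N+2}\e\otimes_{\ATL{N}}\bigl(\ATL{N}\otimes_{(\ATL{N_1}\otimes\ATL{N_2})} M_1[N_1]\otimes M_2[N_2]\bigr)$, where the inner balanced tensor product is over $\varepsilon_{N_1,N_2}(\ATL{N_1}\otimes\ATL{N_2})\subset\ATL{N}$ and the outer one is over $\psi(\ATL{N})=\e\ATL{N+2}\e\subset\ATL{N+2}$.

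Next I would run the same chain of isomorphisms as in~\eqref{eq:Gl-M1M2-2} and~\eqref{eq:Gl-M1M2-3}: use associativity of the balanced tensor product to pull $\ATL{N+2}\e\otimes_{\ATL{N}}\ATL{N}$ together, rewrite $\ATL{N}=\e\ATL{N+2}\e$ so the expression becomes $\ATL{N+2}\e\ATL{N+2}\e\otimes_{(\ATL{N_1}\otimes\ATL{N_2})} M_1\otimes M_2$, then apply Lemma~\ref{lem:TeTe} (valid for any associative algebra, in particular the infinite-dimensional $\ATL{N+2}$ and its left ideal $I_\es=\ATL{N+2}\e$) to collapse $\ATL{N+2}\e\ATL{N+2}\e$ to $\ATL{N+2}\e$. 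At this point $\ATL{2}$ in the subscript is being realised as $\e\ATL{N_2+2}\e$ inside $\ATL{N_2+2}\subset\ATL{N+2}$ via $\psi$, exactly as in the finite case. I then rewrite $\ATL{N+2}=\ATL{N+2}\otimes_{(\ATL{N_1}\otimes\ATL{N_2+2})}(\ATL{N_1}\otimes\ATL{N_2+2})$, rearrange tensor factors, use $\ATL{N_1}\otimes_{\ATL{N_1}}M_1[N_1]=M_1[N_1]$, and recognise the surviving factor $\ATL{N_2+2}\e\otimes_{\ATL{N_2}}M_2[N_2]$ as $\afunGl_{N_2}(M_2[N_2])=M_2[N_2+2]$, giving $M_1[N_1]\afus M_2[N_2+2]$. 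Finally, as in the finite case, I would exhibit the explicit map $\varkappa: a\e\otimes b\otimes m_1\otimes m_2\mapsto a\cdot b\otimes m_1\otimes(\one_{N_2+2}\e\otimes m_2)$ on representatives, observe it sends a representative of an equivalence class to another representative of the same class, hence is the identity on the level of the direct limit, and note the $\ATL{N+2}$-actions agree on the nose (not just up to isomorphism), so $\afunGl_{N_1+N_2}$ literally sends $M_1[N_1]\afus M_2[N_2]$ to $M_1[N_1]\afus M_2[N_2+2]$.

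The main obstacle I anticipate is bookkeeping around the embedding $\varepsilon$. In the finite case the subalgebra $\TL{N_1}\otimes\TL{N_2}$ sits inside $\TL{N}$ by the trivial ``side-by-side'' embedding, and $\e=\frac1m e_{N+1}$ lives cleanly in the right factor $\TL{N_2+2}$; for the affine algebras the embedding $\varepsilon_{N_1,N_2}$ twists the translation generators by braid factors (see~\eqref{two-transl},~\eqref{two-e0-1},~\eqref{two-e0-2}), and $\psi$ also twists $u^{\pm1}$ and $e_N$ by $e_{N+1}$. I need to check that the composite embedding $\ATL{N_1}\otimes\ATL{N_2}\hookrightarrow\ATL{N}\xrightarrow{\psi}\e\ATL{N+2}\e$ agrees with the embedding $\ATL{N_1}\otimes\ATL{N_2+2}\hookrightarrow\ATL{N+2}$ followed by restriction of the right factor to $\e\ATL{N_2+2}\e$ — in other words, that $\psi\circ\varepsilon_{N_1,N_2}$ equals $\varepsilon_{N_1,N_2+2}$ restricted along $\psi$ on the second factor. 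This is the affine analogue of the (obvious) commuting square used implicitly in the finite proof; it should follow from the diagrammatic descriptions, since adding an arc at sites $(N+1,N+2)$ commutes with the windings defining $\varepsilon$, but it genuinely uses the specific form of the embedding and is the one place where ``repeat the finite proof'' is not automatic. Once this compatibility is in hand, the rest is the same formal manipulation of balanced tensor products, all of which is valid over infinite-dimensional algebras.
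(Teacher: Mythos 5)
Your proposal is correct and follows essentially the same route as the paper, which itself proves this lemma simply by repeating the argument of Lemma~\ref{lem:Gl-class} with $\TL{N}$ replaced by $\ATL{N}$ and the product $a\cdot b$ in the map~\eqref{eq:kappa} reinterpreted via the affine embedding of Sec.~\ref{sec:aTL-emb}. Your explicit identification of the compatibility $\psi\circ\varepsilon_{N_1,N_2}=\varepsilon_{N_1,N_2+2}\circ(\id\otimes\psi)$ as the one genuinely non-automatic step is a point the paper leaves implicit, and your diagrammatic justification of it is the right one.
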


The proof of this lemma  repeats the proof of the analogous Lem.~\ref{lem:Gl-class} where  our manipulations with the balanced tensor products are also valid for infinite-dimensional algebras. One has only to replace $\TL{N}$   by $\ATL{N}$ and the product $a\cdot b$ in the map~\eqref{eq:kappa} stands now for the product of $a$ and the image of $b$ under our affine TL embedding of $\ATL{N}$ into $\ATL{N+2}$, recall Sec.~\ref{sec:aTL-emb}.

Using Lem.~\ref{lem:aGl-class} together with Prop.~\ref{prop:afunGl-W}, we have an immediate application to the calculation of affine TL fusion 
rules:\footnote{To vary  the index $N_1$, we introduce the globalisation functors $\afunGll_N$  corresponding to the idempotents $\el$   as in Rem.~\ref{rem:funGll} and obtain
%\begin{equation}\label{eq:aGl-class-l}
$\afunGll_{N_1+N_2}: M_1[N_1]\afus M_2[N_2] \; \mapsto  \afunGll_{N_1}(M_1[N_1])\afus M_2[N_2]$.
%\end{equation}
 And then note that the $\ATL{N_1+2}$ module $\afunGll_{N_1}(M_1[N_1])$ is identical to $\afunGl_{N_1}(M_1[N_1])$.
 }
\begin{multline}
\label{afus-ex1-gen}
\StJTL{\half}{z_1}[N_1]\afus\StJTL{\half}{z_2}[N_2] =
\delta_{z_2,-\q z_1}
\StJTL{1}{ i\q^{\half} z_1}[N_1+N_2] \\
\oplus
\delta_{z_2,z_1^{-1}}
\StJTL{0}{- i\q^{-\half}z_1}[N_1+N_2] \ ,
\end{multline}
for odd $N_1$ and $N_2$, and
\begin{multline}
\label{afus-ex2-gen}
\StJTL{\half}{z_1}[N_1]\afus\StJTL{1}{z_2}[N_2] =
\delta_{z_2,-i\q^{3/2} z_1}
\StJTL{\frac{3}{2}}{-\q z_1}[N_1+N_2] \\
\oplus
\delta_{z_2,i\q^{1/2}z_1^{-1}}
\StJTL{\half}{-\q/ z_1}[N_1+N_2]\ ,
\end{multline}
for odd $N_1$ and even $N_2$, where we also used the result of the calculation on $1+1$ and $1+2$ sites in~\eqref{afus-ex1} and~\eqref{afus-ex2}, respectively.
We thus  see that the fusion rules are stable with the index $N$.

\medskip
 We also  get the following result, similarly to the finite TL case. 
\begin{prop}\label{cor:aM1M2}
The modules $M_1[N_1]\afus M_2[N_2]$ and $M_1[N_1+2n]\afus M_2[N_2+2m]$ over the corresponding  affine TL algebras, for integer $n$ and $m$, are in the same equivalence class in the direct limit $\catATLinf$. 
%and are different representatives of the same object in $\catTLinf$. 
 If $m=-n$, the two $\ATL{N_1+N_2}$ modules are even identical in $\catATL_{N_1+N_2}$.
Their equivalence class is by definition the fusion $M_1\afuslim M_2$ of two classes $M_1$ and $M_2$ in $\catATLinf$. 
\end{prop}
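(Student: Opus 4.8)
The plan is to follow verbatim the strategy used for the finite case in Corollary~\ref{cor:M1M2}, now that Lemma~\ref{lem:aGl-class} supplies the affine analogue of Lemma~\ref{lem:Gl-class}. First I would vary the second index: applying Lemma~\ref{lem:aGl-class} repeatedly (by induction on $|m|$, reading the chain of identifications in the direction dictated by the sign of $m$) shows that $M_1[N_1]\afus M_2[N_2]$ and $M_1[N_1]\afus M_2[N_2+2m]$ lie in the same equivalence class in $\catATLinf$ for every integer $m$ for which both representatives are nonzero (when one of them is the zero object the assertion is vacuous). Next I would vary the first index. For this one introduces, exactly as in Remark~\ref{rem:funGll} but for $\ATL{N}$, the left-arc globalisation functors $\afunGll_N$ attached to the idempotent $\el=\frac{1}{m}e_{1}\in\ATL{N+2}$ and the corresponding left-arc map $\psil$; the analogue of the Proposition at the beginning of Section~\ref{semiGaffTL} (with $\psi$ replaced by $\psil$) holds by the same diagrammatic check. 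One then proves the left-handed version of Lemma~\ref{lem:aGl-class},
\[
\afunGll_{N_1+N_2}:\qquad M_1[N_1]\afus M_2[N_2]\ \longmapsto\ \afunGll_{N_1}(M_1[N_1])\afus M_2[N_2]\ ,
\]
whose proof is identical to that of Lemma~\ref{lem:aGl-class} after interchanging the two tensor factors and replacing $\e$ by $\el$ and the map $\varkappa$ of~\eqref{eq:kappa} by its mirror image. Since $\afunGll_{N_1}(M_1[N_1])$ coincides with $\afunGl_{N_1}(M_1[N_1])=M_1[N_1+2]$ as an $\ATL{N_1+2}$-module (the use of $\el$ versus $\e$ being only a choice of where the cylinder is cut), iterating gives $M_1[N_1]\afus M_2[N_2]\sim M_1[N_1+2n]\afus M_2[N_2]$ for every integer $n$. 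Composing the two variations yields that $M_1[N_1]\afus M_2[N_2]$ and $M_1[N_1+2n]\afus M_2[N_2+2m]$ are representatives of one and the same object of $\catATLinf$, which by definition is $M_1\afuslim M_2$.

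For the refinement in the case $m=-n$ one has to upgrade ``same equivalence class'' to ``identical $\ATL{N_1+N_2}$-module''; here $N_1+2n+(N_2-2n)=N_1+N_2$, so both sides already live in $\catATL_{N_1+N_2}$. Assuming $n\geq 0$ (the case $n<0$ is symmetric), I would write, by $n$ applications of Lemma~\ref{lem:aGl-class} on the right factor,
\[
M_1[N_1]\afus M_2[N_2]\ =\ \afunGl_{N_1+N_2-2}\circ\cdots\circ\afunGl_{N_1+N_2-2n}\bigl(M_1[N_1]\afus M_2[N_2-2n]\bigr)\ ,
\]
and, by $n$ applications of the left-handed version on the first factor,
\[
M_1[N_1+2n]\afus M_2[N_2-2n]\ =\ \afunGll_{N_1+N_2-2}\circ\cdots\circ\afunGll_{N_1+N_2-2n}\bigl(M_1[N_1]\afus M_2[N_2-2n]\bigr)\ ,
\]
the point being that in Lemma~\ref{lem:aGl-class} (and in its mirror) the identification is implemented by the map $\varkappa$ of~\eqref{eq:kappa}, which is the identity map on the set of equivalence classes of the balanced tensor products, so these are genuine equalities of modules and not merely isomorphisms. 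Since at each level $K$ the functors $\afunGl_K$ and $\afunGll_K$ produce the same $\ATL{K+2}$-module, the two iterated images above are the same $\ATL{N_1+N_2}$-module, as claimed.

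The only technical input with real content -- the validity of the manipulations with balanced tensor products over the infinite-dimensional algebras $\ATL{N}$ -- is already packaged in Lemma~\ref{lem:aGl-class}, so what remains is bookkeeping. The step I would be most careful about is the assertion that $\afunGll_K$ and $\afunGl_K$ yield literally the same module, which underpins the $m=-n$ statement: in the affine setting ``left'' and ``right'' arc-embeddings differ by a rotation of the cylinder, so one should verify explicitly on diagrams (as in Remark~\ref{rem:funGll}) that the resulting $\ATL{K+2}$-actions agree on the nose rather than only up to the obvious isomorphism. Everything else is a routine transcription of the finite-TL arguments of Section~\ref{limTLcat}, with $\TL{N}$ replaced by $\ATL{N}$ and $\fus$ by $\afus$.
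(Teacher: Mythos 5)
Your proposal is correct and follows essentially the same route as the paper: repeated application of Lemma~\ref{lem:aGl-class} to shift $N_2$, the left-arc analogue (via the idempotent $\el$ and the functors $\afunGll_N$, exactly as the paper does in a footnote mirroring Remark~\ref{rem:funGll}) to shift $N_1$, and the observation that left and right globalisation produce the identical module, with the $m=-n$ refinement resting on the fact that the identification map $\varkappa$ is the identity on equivalence classes of the balanced tensor product. The point you flag for extra care — that $\afunGll_K$ and $\afunGl_K$ agree on the nose in the affine setting — is indeed the one place where the paper is terse, but your treatment matches its intent.
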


By this proposition we can introduce the definition of the tensor product in $\catATLinf$.

%\subsection{Tensor product on $\catATLinf$}
\begin{Dfn}
We define the tensor product $\afuslim$ in $\catATLinf$ as
\begin{equation}\label{afuslim}
\afuslim:\qquad (M_1,M_2)\; \mapsto\; M_1\afuslim M_2 = \lim\Bigl[M_1[N_1]\afus M_2[N_2]\Bigr] \ ,
\end{equation}
where $\lim[\ldots]$ stands for taking the direct limit of the object (in this case, from $\catATL_{N_1+N_2}$) or the corresponding equivalence class, for any choice of $(N_1,N_2)$ such that $M_1[N_1]$ and $M_2[N_2]$ are non-zero. (This definition does not depend on such a choice because of Prop.~\ref{cor:aM1M2}.)
% Similarly to the finite TL case, we show  that~\eqref{afuslim} is well-defined and does not depend on representatives in the classes $M_1$ and $M_2$.
\end{Dfn}

By the definition of $\afuslim$, note that the fusion obtained in~\eqref{afus-ex1-gen} and~\eqref{afus-ex2-gen} allows us to calculate or decompose the tensor products $\StJTL{\half}{z_1}\afuslim\StJTL{\half}{z_2}$ and $\StJTL{\half}{z_1}\afuslim\StJTL{1}{z_2}$ in $\catATLinf$: we have just to remove the square brackets in the formulas~\eqref{afus-ex1-gen} and~\eqref{afus-ex2-gen} replacing  $\afus$ by $\afuslim$.

\medskip

We next define the associator for the tensor product $\afuslim$ in $\catATLinf$. Recall that to specify a morphism in $\Hom_{\catATLinf}(M_1,M_2)$ it is enough to choose a representative in the space $\Hom_{\catATL_N} (M_1[N],M_2[N])$. 

%\subsubsection{Associator in $\catTLinf$} 
\begin{Dfn}\label{def:aasslim}
For a triple of objects $M_1$, $M_2$, $M_3$ in $\catATLinf$ we define an isomorphism
\begin{equation}\label{aasslim}
\aasslim_{M_1,M_2,M_3}: \quad \bigl(M_1\afuslim M_2\bigr) \afuslim M_3 \xrightarrow{\quad \cong\quad}
M_1\afuslim \bigl( M_2 \afuslim M_3\bigr) 
\end{equation}
as follows: take any triple of positive integers $(N_1,N_2,N_3)$ such that $M_i[N_i]$ are non-zero, for $i=1,2,3$, and take then the corresponding associator
$\ass_{M_1[N_1],M_2[N_2],M_3[N_3]}$ defined in Prop.~\ref{prop:ass-ATL}, then $\aasslim_{M_1,M_2,M_3}$ is its equivalence class, i.e. the corresponding element in the quotient space as in~\eqref{Hom-lim-def}.
\end{Dfn}

The arguments showing  that   $\aasslim$ is well-defined and does not depend on the choice of representatives in the $\Hom$ spaces are similar to those after Def.~\ref{def:asslim}. The arguments that the family $\aasslim_{M_1,M_2,M_3}$ satisfies the pentagon identity~\eqref{eq:pentagon} are identical to those in the finite TL case.

\medskip
 
For the moment, we were not able to deduce a tensor unit in the limit category $\catATLinf$ of affine TL modules, so we have obtained at least a semi-group category.
We have thus proven the main theorem of this section.

\begin{Thm}\label{thm:catATLinf}
The category $\catATLinf$ defined in~\eqref{catATLinf} with~\eqref{catATLinf-0} is a semi-group category with  the  tensor product $\afuslim$ given in~\eqref{afuslim} with the associator $\aasslim_{A,B,C}$ given in~\eqref{aasslim}
satisfying the pentagon condition.
\end{Thm}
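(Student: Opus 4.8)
The plan is to assemble the theorem from the structural results already in place in this section and in Section~\ref{limTLcat}: the construction of $\catATLinf$ as a direct limit of abelian categories, the well-definedness of the fusion bifunctor $\afuslim$, the well-definedness and naturality of the associator $\aasslim$, and the pentagon identity, which follows formally once one knows it for every finite-level representative. No genuinely new computation is needed beyond the lemmas and propositions of this section.

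First I would note that $\catATLinf=\catATLev\oplus\catATLodd$ is $\oC$-linear and abelian, as already recorded after \eqref{catATLinf}: each $\catATL_N$ is abelian and $\oC$-linear, and by the very definition of a direct limit of categories one has $\Hom_{\catATLinf}(M_1,M_2)=\varinjlim_N\Hom_{\catATL_N}(M_1[N],M_2[N])$, so kernels, cokernels and finite biproducts are computed on representatives and descend to the limit. Second, I would check that $\afuslim$ of \eqref{afuslim} is a bifunctor $\catATLinf\times\catATLinf\to\catATLinf$: on objects this is precisely Prop.~\ref{cor:aM1M2}, which shows that $M_1[N_1]\afus M_2[N_2]$ has a class in $\catATLinf$ independent of the choice of representatives $M_i[N_i]\neq 0$; on morphisms one combines the fact that $\afus$ is a bifunctor at each finite level with the compatibility with $\afunGl_{N_1+N_2}$ (and with the left-arc variant $\afunGll$ for shifts in $N_1$) established inside the proof of Lem.~\ref{lem:aGl-class}, so that a pair of levelwise morphisms has a well-defined class in the limit and composition and identities are preserved.

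Third, the associator $\aasslim$ of Def.~\ref{def:aasslim} is a natural family of isomorphisms $(M_1\afuslim M_2)\afuslim M_3\xrightarrow{\ \cong\ }M_1\afuslim(M_2\afuslim M_3)$. Well-definedness amounts to showing that the finite-level associators for $(N_1,N_2,N_3)$ and for $(N_1,N_2,N_3+2)$, and likewise for shifts in $N_1$ and $N_2$, lie in the same class of $\Hom_{\catATLinf}$; this is the affine analogue of the verification following Def.~\ref{def:asslim}, carried out on representatives in the balanced tensor products by means of the identity maps $\varkappa$ of \eqref{eq:kappa} --- with the proviso that now $a\cdot b$ denotes the product of $a$ with the image of $b$ under the affine embedding $\varepsilon$ of Sec.~\ref{sec:aTL-emb} --- so that $\afunGl_N(\aass_{M_1[N_1],M_2[N_2],M_3[N_3]})=\aass_{M_1[N_1],M_2[N_2],M_3[N_3+2]}$. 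Naturality is inherited from the naturality of each representative $\aass$ (Prop.~\ref{prop:ass-ATL}). Finally, the pentagon identity holds because it holds for every representative $\aass$ (the pentagon claim of Prop.~\ref{prop:ass-ATL}, proved exactly as Prop.~\ref{prop:ass-pentagon}), and the pentagon diagram in $\catATLinf$ is, edge by edge, the equivalence class of the corresponding pentagon diagram in $\catATL_{N_1+N_2+N_3+N_4}$; a diagram of equivalence classes commutes as soon as some representative diagram does. Collecting these points yields the theorem; since we have no candidate for a tensor unit in the affine limit, we obtain only a semi-group (monoidal-without-unit) category.

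The main obstacle --- the single step where the affine case is not a verbatim copy of Section~\ref{limTLcat} --- is the compatibility of the nontrivial braided embedding $\varepsilon_{N_1,N_2}:\ATL{N_1}\otimes\ATL{N_2}\hookrightarrow\ATL{N_1+N_2}$ of Sec.~\ref{sec:aTL-emb} with the arc-globalisation along $\e=\tfrac1m e_{N+1}$: one must confirm that inducing along $\e$ intertwines $\varepsilon_{N_1,N_2}$ with $\varepsilon_{N_1,N_2+2}$ after the identification $\ATL{N}\cong\e\ATL{N+2}\e$. This is precisely the content supplied by Lem.~\ref{lem:aGl-class} (and its $\afunGll$-variant), so once that lemma is invoked the remaining bookkeeping is formal and identical to the finite case.
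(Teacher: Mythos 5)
Your proposal is correct and follows essentially the same route as the paper: the theorem is assembled from Prop.~\ref{prop:ass-ATL} (finite-level associator and pentagon), Lem.~\ref{lem:aGl-class} and Prop.~\ref{cor:aM1M2} (well-definedness of $\afuslim$ on equivalence classes), and the verification after Def.~\ref{def:aasslim} that the associators at different levels $(N_1,N_2,N_3)$ lie in a common class, with the pentagon in the limit holding because it holds for every representative. You also correctly single out the one genuinely non-formal ingredient of the affine case --- the compatibility of the braided embedding $\varepsilon_{N_1,N_2}$ with globalisation along $\e$ --- which is exactly what Lem.~\ref{lem:aGl-class} supplies.
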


 The question on existence of the tensor unit in this category will be explored in our next paper~\cite{GJS}.

\medskip

It is now time to explore the braiding properties of the affine TL fusion.

\subsection{Non-commutativity of affine TL fusion}
\label{sec:br-ATL}

In contrast to the finite TL fusion, the affine one $\afus$ is trivial in most of the cases, recall the result in~\eqref{afus-ex1} as well as~\eqref{afus-ex1-gen} and~\eqref{afus-ex2-gen}, except certain resonance conditions on the $z$-parameters. It makes the fusion $\afus$ non-commutative. Indeed, we compute the fusion in~\eqref{afus-ex1} in the two orders: 
\begin{equation}\label{afus-ex1-rev}
\StJTL{\half}{z}[1]\afus\StJTL{\half}{-\q z}[1] = \StJTL{1}{ i\q^{\half} z}[2] \qquad \text{while} \qquad 
\StJTL{\half}{-\q z}[1] \afus\StJTL{\half}{z}[1] = 0
\end{equation} 
(if $\q$ is not $\pm1$) and
\begin{equation*}
\StJTL{\half}{z}[1]\afus\StJTL{\half}{z^{-1}}[1] =\StJTL{0}{-i\q^{-\half}z}[2] \quad \text{while} \quad 
\StJTL{\half}{z^{-1}}[1] \afus\StJTL{\half}{z}[1] = \StJTL{0}{-i\q^{-\half}z^{-1}}[2]\ .
\end{equation*}

We thus conclude that in contrast to the finite TL fusion $\fus$, which has the braiding, our affine TL fusion is non-commutative and there exists no braiding. 
However, we can introduce another affine TL fusion  by replacing $g_i$ by its inverse $g_i^{-1}$ in the definition~\eqref{aTL-embed} of the embedding  $\afemb{N_1,N_2}: \ATL{N_1}\otimes\ATL{N_2} \to \ATL{N_1+N_2}$ introduced in Sec.~\ref{sec:aTL-emb}, which diagrammatically corresponds to the interchange between under- and above-crossings. Let us denote such embedding as $\afembm{N_1,N_2}$. The definition~\ref{afus-def} using the embedding $\afembm{N_1,N_2}$ gives then a different tensor product that we denote as  $\afusm$. With this new embedding, the two translation generators correspond now, instead of (\ref{two-transl}),  to 
\begin{equation}\label{braiiden-2}
\tilde{u}^{(1)}=u g_{N-1}\ldots g_{N_1}, \qquad \tilde{u}^{(2)}=g_{N_1}^{-1}\ldots g_1^{-1}u\ .
\end{equation}
In terms of diagrams, we have for instance (with $N_1=3$ and $N_2=2$), instead of (\ref{u1-def}) and (\ref{u2-def}), the following

\begin{equation}\label{utilde1-def}
 \begin{tikzpicture}
 \node[font=\large]  at (-1.5,-2.2) {\mbox{} $\tilde{u}^{(1)}\;\;\mapsto$ \mbox{}\qquad};
\drawu 
\braid[braid colour=black,strands=5,braid width=\brw,braid start={(-0.3,-0.88)}]  {\g_4\g_3}
\node[font=\large]  at (4.0,-2.2) {\mbox{}\qquad \(=\)};
\end{tikzpicture}
\qquad
 \begin{tikzpicture}
 %%%%%%%%% Frame %%%%%%%%
 	\draw[thick, dotted] (-0.05,0.5) arc (0:10:0 and -7.5);
 	\draw[thick, dotted] (-0.05,0.55) -- (3.25,0.55);
 	\draw[thick, dotted] (3.25,0.5) arc (0:10:0 and -7.5);
	\draw[thick, dotted] (-0.05,-0.85) -- (3.25,-0.85);
%%%%%%%%%%%%	

	\draw[thick] (0.3,0.5) arc (0:10:20 and -3.75);
	\draw[thick] (0.9,0.5) arc (0:10:40 and -7.6);
	\draw[thick] (1.5,0.5) arc (0:10:40 and -7.6);
	
%	\draw[thick] (1.7,-0.18) .. controls (0.15,0.08)   .. (0.43,0.1);
%	\draw[thick] (0.57,0.1) -- (1.03,0.1);
%	\draw[thick] (1.17,0.1) -- (1.63,0.1);
%	\draw[thick] (1.77,0.1) .. controls (2.1,0.2)   .. (2.3,0.5);

%	\draw[thick] (1.6,-0.81) arc (0:10:-30 and 3.5);
	\draw[thick] (1.6,-0.81) .. controls (1.7,-0.3)  .. (2.,-0.25 );
		\draw[thick] (2.2,-0.25) -- (2.6,-0.25);
	\draw[thick] (2.1,0.5) -- (2.1,-0.8);
	%\draw[thick] (2.1,-0.35) -- (2.1,-0.8);
	\draw[thick] (2.7,0.5) -- (2.7,-0.8);
	%\draw[thick] (2.7,-0.35) -- (2.7,-0.8);
		\draw[thick] (2.8,-0.25) .. controls (3.13,-0.2)   .. (3.21,-0.1);
\end{tikzpicture}
\end{equation}
and 
\begin{equation}\label{utilde2-def}
 \begin{tikzpicture}
 \node[font=\large]  at (-1.5,-0.2) {\mbox{} $\tilde{u}^{(2)}\;\;\mapsto$ \mbox{}\qquad};
 
\drawu 

\braid[braid colour=black,strands=5,braid width=\brw,braid start={(-0.28,3.6)}]  { \g_3^{-1} \g_2^{-1} \g_1^{-1}}

\node[font=\large]  at (4.0,-0.2) {\mbox{}\qquad \(=\)};
\end{tikzpicture}
\qquad
 \begin{tikzpicture}
 %%%%%%%%% Frame %%%%%%%%
 	\draw[thick, dotted] (-0.05,0.5) arc (0:10:0 and -7.5);
 	\draw[thick, dotted] (-0.05,0.55) -- (3.25,0.55);
 	\draw[thick, dotted] (3.25,0.5) arc (0:10:0 and -7.5);
	\draw[thick, dotted] (-0.05,-0.85) -- (3.25,-0.85);
%%%%%%%%%%%%	
	\draw[thick] (0.0,-0.18) .. controls (0.15,0.08)   .. (0.58,0.1);
	\draw[thick] (0.58,0.1) -- (1.02,0.1);
	\draw[thick] (1.02,0.1) -- (1.77,0.1);
	\draw[thick] (1.77,0.1) .. controls (2.1,0.2)   .. (2.3,0.5);

	\draw[thick] (2.9,-0.81) arc (0:10:-20 and 3.75);

	\draw[thick] (0.5,0.5) -- (0.5,0.17);
	\draw[thick] (0.5,0.) -- (0.5,-0.8);
	\draw[thick] (1.1,0.5) --  (1.1,0.2);
	\draw[thick] (1.1,0.) --  (1.1,-0.8);
	\draw[thick] (1.7,0.5) -- (1.7,0.2);
	\draw[thick] (1.7,0.) -- (1.7,-0.8);

	\draw[thick] (2.75,0.5) arc (0:10:40 and -7.6);

\end{tikzpicture}
\end{equation}

\medskip
It is interesting that there is a  braiding-type operation that relates the two affine TL fusions $\afus$ and $\afusm$. Indeed, recall that in Sec.~\ref{sec:braiding-TL} we have introduced the braiding $\br_{M_1,M_2}$ for the  TL fusion given by conjugation~\eqref{br-TL}-\eqref{br-TL-1} with the ``braid-like'' element $\g_{N_1,N_2}$.
It is easy to see graphically -- or by direct calculation using repeatedly that $ug_i=g_{i+1}u$ -- that the following identities hold:
\begin{eqnarray}
\g_{N_1,N_2}~ u^{(1)}_{N_1,N_2}=\tilde{u}^{(2)}_{N_2,N_1}~ \g_{N_1,N_2}\ ,\nonumber\\
\g_{N_1,N_2} ~u^{(2)}_{N_1,N_2}=\tilde{u}^{(1)}_{N_2,N_1}~\g_{N_1,N_2}\ ,
\label{braiiden}
\end{eqnarray}
where we temporarily used the notation  $u^{(1,2)}_{N_j,N_k}$ and $\tilde{u}^{(1,2)}_{N_j,N_k}$ for images of the translation generators $u^{(1,2)}$
under the homomorphisms $\varepsilon_{N_j,N_k}$ and  $\varepsilon^-_{N_j,N_k}$, respectively. 
%obtained by affine TL fusion  $\afus$ (resp. $\afusm$)  of $\ATL{N_j}$ with $\ATL{N_k}$.

Similarly to the finite TL case, we note that the element $\g_{N_1, N_2}$ defines an automorphism on $\ATL{N_1+N_2}$ by the conjugation $a\mapsto \g_{N_1, N_2}\cdot a \cdot \g_{N_1, N_2}^{-1}$ which maps the subalgebra $\afemb{N_1,N_2}\bigl(\ATL{N_1}\otimes\ATL{N_2}\bigr)$ (i.e., under the first type of the affine TL embedding) to the subalgebra $\afembm{N_2,N_1}\bigl(\ATL{N_2}\otimes\ATL{N_1}\bigr)$ (i.e., under the second embedding) as 
\begin{equation}\label{g-ATL-ATL}
\afemb{}(a\otimes b) \; \mapsto \;  \g_{N_1, N_2} \cdot \afemb{}(a\otimes b)\cdot \g_{N_1, N_2}^{-1} = \afembm{}(b \otimes a)\ ,\qquad a\in\ATL{N_1}\ ,\;\; b\in\ATL{N_2}\ .
\end{equation}

\newcommand{\abr}{\widehat{\br}}

Then, we can introduce a braiding-type  relation between $\afus$ and $\afusm$ given by the isomorphism 
\begin{equation}\label{br-ATL}
\abr_{M_1,M_2}: \quad  M_1[N_1]\afus M_2[N_2] \; \xrightarrow{\quad \cong \quad} \; M_2[N_2] \afusm M_1[N_1] \quad
\end{equation}
with 
\begin{equation}\label{br-ATL-1}
\abr_{M_1,M_2}: \quad 
a\otimes m_1\otimes m_2 \; \mapsto \; \g^{}_{N_1,N_2} \cdot a \cdot \g_{N_1,N_2}^{-1} \otimes m_2\otimes m_1\ ,
\end{equation}
where $a\in\ATL{N_1+N_2}$,  and $m_1\in M_1[N_1]$,  $m_2\in M_2[N_2]$. 
Recall that we write $a\otimes m_1\otimes m_2$ here for a representative in the corresponding class in $M_1[N_1]\afus M_2[N_2]$.
The only non-trivial thing to  check is that the map~\eqref{br-ATL-1} is well-defined, i.e., does not depend on a representative in the class. Indeed, assume that $m_1=b \cdot m'_1$ and $m_2 = c\cdot m'_2$ for some $b\in\ATL{N_1}$, $c\in\ATL{N_2}$ and  some $m'_i\in M_i[N_i]$, and let us compute~\eqref{br-ATL-1} for the other representative (setting here $\g\equiv \g_{N_1,N_2}$, $\afemb{}\equiv \afemb{N_1,N_2}$  and  $\afembm{}\equiv \afembm{N_2,N_1}$ for brevity):
\begin{multline}\label{br-ATL-2}
a\cdot \afemb{}(b\otimes c)\otimes m'_1\otimes m'_2 \; \mapsto \; \g \cdot a \cdot  \afemb{}(b\otimes c) \cdot \g^{-1} \otimes m'_2\otimes m'_1 
= \g \cdot a \cdot g^{-1} \cdot \bigl(\g \cdot  \afemb{}(b\otimes c) \cdot g^{-1}\bigr) \otimes m'_2\otimes m'_1 \\
=   \g \cdot a \cdot g^{-1}\cdot  \afembm{}(c\otimes b) \otimes m'_2\otimes m'_1
= \g \cdot a \cdot \g^{-1} \otimes m_2\otimes m_1  \ ,
\end{multline}
where we used~\eqref{g-ATL-ATL} for the second equality, and note that the $\otimes$ in front of $m'_2$ on the right-hand side from `$\mapsto$' is over the subalgebra $\afembm{N_2,N_1}\bigl(\ATL{N_2}\otimes\ATL{N_1}\bigr)$, as assumed in~\eqref{br-ATL}.
The final result in~\eqref{br-ATL-2} thus agrees with~\eqref{br-ATL-1}.
Note that if we would use the same tensor product ($\afus$ or $\afusm$) in~\eqref{br-ATL}-\eqref{br-ATL-1}, the map would not be well-defined.

The rest of the proof of the isomorphism property repeats the finite TL case discussed in Sec.~\ref{sec:braiding-TL}:
recall   that  an element $a'\in\ATL{N_1+N_2}$   acts on the left-hand side of~\eqref{br-ATL-1}  by the multiplication with $a'$ while  on the right-hand side of~\eqref{br-ATL-1} it acts  by the multiplication with $\g^{}_{N_1,N_2}\cdot a' \cdot \g_{N_1,N_2}^{-1}$.
%, by the definition of the module structure on the right-hand side.
% given by applying the automorphism on the algebra. 
The intertwining property of the map $\abr_{M_1,M_2}$ in~\eqref{br-ATL-1}, i.e., that it commutes with the two $\ATL{N_1+N_2}$ actions, is then straightforward to check. This map is obviously bijective. We have thus proven that $\abr_{M_1,M_2}$ is an isomorphism in the category~$\catATL$.
% from the fact that the conjugation by $\g_{N_1,N_2}$ defines an automorphism of the algebra.

\medskip

We call the isomorphisms  $\abr_{M_1,M_2}$ as \textit{semi-braiding} associated with the two tensor products $\afus$ and $\afusm$.
We finally note that the family of isomorphisms $\abr_{M_1,M_2}$ defined in~\eqref{br-ATL}-\eqref{br-ATL-1} satisfies an analogue of the coherence (hexagon) conditions (required for the ordinary braiding in a tensor category) but involving the two tensor products $\afus$ and $\afusm$.
More properties of the relation between $\afus$ and $\afusm$ will be explored in our forthcoming paper~\cite{GJS}.

We finally note that the semi-braiding $\abr_{M_1,M_2}$ is lifted to the corresponding family of isomorphisms $\abr_{M_1,M_2}^{\;\catATLinf}$ in the direct-limit category $\catATLinf$, similarly to what we have in Def.~\ref{def:br-lim}. This extends Thm.~\ref{thm:catATLinf} by the semi-braiding structure on $\catATLinf$ with respect to the two tensor products -- the \textit{chiral} $\afuslim$ and the \textit{anti}-chiral $\afuslim^{-}$.

\newcommand{\Vir}{V}
\newcommand{\catVir}{\mathsf{Vir}}
\newcommand{\fusV}{\otimes_{\catVir}}

\section{Outlook: a relation to Virasoro algebra}\label{sec:outlook}
Let $\Vir_p$ be the Virasoro algebra of central charge 
\begin{equation}
c(p)=1-{6\over p(p-1)}\label{centch}, \qquad p \in (1,\infty]
\end{equation}
i.e., a Lie algebra generated by $L_n$, with $n\in\oZ$, and the central element $c$ with brackets
\begin{equation}
[L_n,L_m]=(n-m)L_{n+m}+{c\over 12} (n^3-n)\delta_{n+m,0}\ .
\end{equation}

Verma modules  $\Verma_{h}$ are highest-weight representations of $\Vir_p$ and completely characterized by the central charge~$c$ and the eigenvalue~$h$  (so called ``conformal weight'') of $L_0$ on their highest weight vector. These modules $\Verma_h$ admit singular vectors iff their conformal weights are of the form
\begin{equation}
h_{r,s}={[pr-(p-1)s]^2-1\over 4p(p-1)}\ ,
\end{equation}
with $r,s\in \oZ_+$.  The first singular vector appears at level $rs$, that is, with $L_0$ eigenvalue given by $h_{r,s}+rs=h_{r,-s}$. For generic central charge $c$, these modules admit a unique singular vector. The corresponding Kac module  is then defined as the quotient of the  Verma module with $h_{r,s}$ by its submodule of the weight $h_{r,s}+rs$ (which is also a Verma module):  $\VK_{r,s} \equiv \Verma_{h_{r,s}}/\Verma_{h_{r,-s}}$. This module is irreducible when the central charge is generic. More generally, the Kac module $\VK_{rs}$ is defined as the submodule of  the Feigin-Fuchs module~\cite{FF} $\VF_{rs}$ generated by the subsingular vectors of grade strictly less than~$rs$.

We define then a certain abelian $\oC$-linear category of  $\Vir_p$ representations which is a highest-weight category with the standard objects given by the Kac modules $\VK_{1,n}$, with $n\in\oN$, and where indecomposable projective objects admit non-diagonalizable action of $L_0$ and are filtered by the Kac modules and the length of the filtration is at most  two. Here, the projective covers (those which are reducible) are  the so-called \textit{staggered} modules~\cite{KytolaRidout, Rohsiepe}, which means an extension of two highest-weight modules such that the action of $L_0$ is non-diagonalizable. In our case, their subquotient structure has a diamond shape, for $j\modd p\ne \frac{kp-1}{2}$ with
$k=0,1$,
\begin{align}\label{stagg-pic-gen-dense-even}     
   \xymatrix@C=5pt@R=25pt@M=3pt{%
    &&\\
    &\VP_{1,2j+1}: &\\
    &&
 }      
&  \xymatrix@C=4pt@R=25pt@M=3pt@W=3pt{%
    &&{\stackrel{h_{1,2j+1}}{\bullet}}\ar[dl]\ar[dr]&\\
    &{\stackrel{h_{1,1+2(j-s)}}{\bullet}}\ar[dr]&&\stackrel{h_{1,1+2(j+p-s)}}{\bullet}\ar[dl]\\
    &&{\stackrel{h_{1,2j+1}}{\bullet}}&
 } \quad
&   \xymatrix@C=5pt@R=25pt@M=3pt{%
    &&\\
    &\text{for}\quad  j \geq \ffrac{p}{2},&\\
    &&
 }&      
\end{align}
where we set $s\equiv s(j)=(2j+1)\;\mathrm{mod}\; p$, 
and the nodes `$\bullet$' together with conformal weights $h_{1,j}$ denote irreducible $\Vir_p$ subquotients.
We note that the modules with this subquotient structure and the requirement that the action of $L_0$ is non-diagonalisable on them are unique up to an isomorphism for the central charges $c(p)$ and integer  $p\geq3$, see~\cite{KytolaRidout}.
For $j<p/2$, the projective covers $\VP_{1,2j+1}$ are $\VK_{1,2j+1}$.
We denote such an abelian category (generated by the staggered modules $\VP_{1,2j+1}$) as~$\catVir_p$. 

By a direct comparison of the projective objects in $\catVir_p$ and in $\catTLinf$ for $\q=e^{\rmi\pi/p}$ described in Sec.~\ref{sec:st-proj-Cinf} we establish an isomorphism between the $\Hom$ spaces in both the categories and eventually the following equivalence.

\begin{Prop}\label{prop:equiv-1}
There exists an equivalence of  abelian $\oC$-linear categories  $\catTLinf$ for $\q=e^{\rmi\pi/p}$ from Sec.~\ref{sec:st-proj-Cinf} and $\catVir_p$   such that the simple objects $\IrTL{j}$ are identified with irreducible representations of  conformal weight $h_{1,2j+1}$, the standard objects $\StTL{j}$ are identified with the Kac modules $\VK_{1,1+2j}$, and the projective covers $\PrTL{j}$ with the staggered modules $\VP_{1,2j+1}$.
\end{Prop}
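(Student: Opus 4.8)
The plan is to build the equivalence $\mathscr{E}\colon\catTLinf\xrightarrow{\;\sim\;}\catVir_p$ on objects first by the obvious dictionary --- send $\IrTL{j}\mapsto$ the irreducible of weight $h_{1,2j+1}$, $\StTL{j}\mapsto\VK_{1,1+2j}$, $\PrTL{j}\mapsto\VP_{1,2j+1}$ --- and then promote it to a functor by exploiting projectivity. Concretely, I would first record that both categories are abelian, $\oC$-linear, and have enough projectives: on the $\catVir_p$ side this is built into the definition of the category (it is declared highest-weight with projective covers the staggered modules $\VP_{1,2j+1}$ and the short Kac modules for $j<p/2$), and on the $\catTLinf$ side this was established in Sec.~\ref{sec:st-proj-Cinf}, where the projective objects $\PrTL{j}$ of~\eqref{catTLinf-Pj}--\eqref{prTL-pic-inf} were identified together with the reciprocity relation $[\PrTL{j}:\StTL{j'}]=[\StTL{j'}:\IrTL{j}]$. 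The heart of the argument is then a comparison of the two projective generators: I would compare the subquotient diagram~\eqref{prTL-pic-inf} for $\PrTL{j}$ with the diamond diagram~\eqref{stagg-pic-gen-dense-even} for $\VP_{1,2j+1}$, noting that under $j\mapsto 2j+1$, $j-s\mapsto 1+2(j-s)$, $j+p-s\mapsto 1+2(j+p-s)$ the two diagrams coincide, and that the value of $s\equiv s(j)=(2j+1)\bmod p$ is literally the same combinatorial quantity on both sides. This already shows that the composition multiplicities, the heads, and the socles match, and that for $j<p/2$ both projectives degenerate to the respective standard object ($\StTL{j}$ resp.\ $\VK_{1,2j+1}$).

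Next I would upgrade this numerical coincidence to an equivalence of categories by the standard "Morita-type" / projective-generator argument. Let $P=\bigoplus_{j}\PrTL{j}$ (a projective generator of $\catTLinf$ --- here one must be slightly careful because the index set is infinite, so one works with the category of finitely generated objects and the appropriate completion, or equivalently argues block by block in $j$, since $\catTLinf$ decomposes into blocks each of which involves only finitely many weights $j$). Similarly let $Q=\bigoplus_j\VP_{1,2j+1}$. The plan is to show $\mathrm{End}_{\catTLinf}(P)\cong\mathrm{End}_{\catVir_p}(Q)$ as algebras: both are computed from the respective subquotient diagrams, and because the diamond shape has length-two Loewy filtration the endomorphism and extension algebras are small and explicitly presentable (a "zig-zag"/preprojective-type algebra on the quiver read off from~\eqref{prTL-pic-inf}). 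One computes $\Hom$ and $\mathrm{Ext}^1$ spaces between the indecomposable projectives on each side --- these are governed entirely by the arrows in the diamond diagram and by the highest-weight ordering ($\Hom(\StTL{k},\StTL{j})\ne 0$ iff $k=j$ or $k=j+p-s$, as recalled just before~\eqref{StTL-inf}, with the analogous statement for Kac modules) --- and checks that the two presentations agree. Then $\catTLinf\simeq \mathrm{mod}\text{-}\mathrm{End}(P)\simeq\mathrm{mod}\text{-}\mathrm{End}(Q)\simeq\catVir_p$, and by construction this equivalence sends $\PrTL{j}\mapsto\VP_{1,2j+1}$; tracking tops and radical filtrations then forces $\IrTL{j}\mapsto$ (irreducible of weight $h_{1,2j+1}$) and $\StTL{j}\mapsto\VK_{1,1+2j}$, since on each side the standard object is the unique quotient of $\PrTL{j}$ (resp.\ $\VP_{1,2j+1}$) whose radical filtration is obtained by deleting the "lower half" of the diamond --- equivalently it is characterized by the reciprocity/BGG data, which matches.

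The step I expect to be the main obstacle is \emph{the uniqueness of the extensions}, i.e.\ showing that the module-category structure really is determined by the diamond diagrams and not just the composition series. On the Virasoro side this is handled by the cited rigidity result of Kyt\"ol\"a--Ridout~\cite{KytolaRidout} (staggered modules with a given subquotient diagram and non-diagonalizable $L_0$ are unique up to isomorphism for $p\ge 3$), so the remaining work is genuinely on the TL side: one must verify that the non-split self-extensions encoded in~\eqref{prTL-pic-inf} are "of the right type", e.g.\ that $\mathrm{Ext}^1_{\catTLinf}(\IrTL{j},\IrTL{j'})$ is one-dimensional exactly along the arrows of the diamond and zero otherwise, and that the two composable arrows $\IrTL{j}\to\IrTL{j\pm(p-s)}\to\IrTL{j}$ really compose to a nonzero map in $\PrTL{j}$ (so that $L_0$-nondiagonalizability has a TL counterpart: the defining element of the extension is nonzero). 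These $\mathrm{Ext}^1$ computations reduce, via the globalization functors $\funGl_N$ and Prop.~\ref{prop:funGl-W}, to the well-understood representation theory of $\TL{N}(\q+\q^{-1})$ at $\q=e^{\rmi\pi/p}$ recalled in Sec.~\ref{sec:CN} (the projectives $\PrTL{j}[N]$ of~\eqref{prTL-pic}), so in the end it is a bookkeeping exercise; but it is the place where the argument has actual content rather than formal nonsense, and it is what pins down the \emph{abelian} (not yet braided --- that is Conj.~\ref{conj:equiv-2}) equivalence.
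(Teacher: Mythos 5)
Your proposal follows essentially the same route as the paper, which in fact offers only a one-sentence justification ("by a direct comparison of the projective objects \ldots we establish an isomorphism between the $\Hom$ spaces in both the categories"): match the diamond diagrams \eqref{prTL-pic-inf} and \eqref{stagg-pic-gen-dense-even}, identify the endomorphism/extension data of the projective generators, and invoke the Kyt\"ol\"a--Ridout uniqueness of staggered modules on the Virasoro side; your write-up is a faithful and more honest elaboration of exactly this, correctly isolating the $\mathrm{Ext}^1$ bookkeeping as the step with actual content. One small correction: the blocks of $\catTLinf$ at $\q=e^{\rmi\pi/p}$ are \emph{not} finite --- the linkage class of $j$ is the infinite chain $j,\,j+p-s,\,j+p,\,j+2p-s,\dots$ --- so the "block by block with finitely many weights" alternative fails, and you must use your other option (finitely generated objects over the locally finite-dimensional endomorphism algebra of $\bigoplus_j\PrTL{j}$, an $A_\infty$-type zigzag algebra); this does not affect the validity of the argument.
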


This equivalence has an interesting interpretation from the physics point of view, as already mentioned in the introduction.

When $|m|\leq 2$, statistical mechanics models whose Boltzmann weights are built using representations of the Temperley--Lieb algebra are heuristically known to be critical, and have their continuum limit ``described'' by conformal field theories, with central charge~\eqref{centch} if we parametrize 
$m=\q+\q^{-1}$ with $\q=e^{\rmi\pi/p}$ and $p\in(1,\infty]$.
This statement can be made more precise as follows. The spectrum of eigenvalues of the Hamiltonian $H=-\sum_{i=1}^{N-1} e_i$ (which describes a statistical system with ``open boundary conditions'')   in modules over the TL algebra has been found to coincide, in the limit $N\to\infty$ and after proper rescaling, with the spectrum of the generator $L_0-{c\over 24}$ (note, there is a single Virasoro algebra here because we deal with open boundary conditions, corresponding to boundary conformal field theory) in some corresponding Virasoro modules for the central charge (\ref{centch}). When $\q$ is not a root of unity, one can choose without loss of generality to study the TL modules $\StTL{j}$. The corresponding Virasoro module is then found to be the Kac module  with lowest weight given by the conformal weight 
\begin{equation}
h_{1,1+2j}={(1-2j(p-1))^2-1\over 4p(p-1)}\ .
\end{equation}
Using the definition of the Kac module as the quotient $\VK_{1,1+2j} = \Verma_{h_{1,1+2j}}/\Verma_{h_{1,-1-2j}}$, the character of this module is  
\begin{equation}
\hbox{Tr\,}_{\VK_{1,1+2j}} q^{L_0-c/24}=q^{-c/24}{q^{h_{1,1+2j}}-q^{h_{1,-1-2j}}\over P(q)}
\end{equation}
where $P(q)=\prod_{n=1}^\infty (1-q^n)$, 
and $q$ here is a formal parameter. In the case when $\q$ is a root of unity -- that is  integer $p$ --  the representation theory of both the Temperley-Lieb algebra and the Virasoro algebra become more complicated. The correspondence between these two algebras however continues to hold, after we fix an appropriate category for $\Vir_p$ representations of course. In particular, the projective modules  $\PrTL{j}$ described in Sec.~\ref{sec:st-proj-Cinf}, see  the ``diamond shape''  diagram in~\eqref{prTL-pic-inf}, can now be put in correspondence with staggered modules $\VP_{1,2j+1}$, which are composed of two Kac modules, i.e., have a subquotient structure with a diamond shape again~\eqref{stagg-pic-gen-dense-even}.

\medskip

Further, we believe that the category $\catVir_p$ has the tensor product structure  (or fusion) that we denote by $\fusV$, and that it is endowed with the tensor unit $\VK_{1,1}$ (this part is clear from the Vertex-Operator Algebra realization of $\Vir_p$), with the associator and the braiding for any  $p$ from~\eqref{centch}: this is based on the rigorous VOA theory of tensor products in~\cite{[HLZ]}.
There are then indications from physics (based on computation of the fusion rules) that our result in Prop.~\ref{prop:equiv-1} extends further to the level of monoidal or tensor categories. We make the following conjecture:

\begin{Conj}\label{conj:equiv-2}
Let $\catTLinf$ be the direct limit of TL module categories at any $\q$ such that $\q=e^{i\pi/p}$, which is the braided tensor category from Thm.~\ref{thm:catTLinf}, and $\catVir_p$ be the braided tensor category of the Virasoro algebra representations at central charge $c(p)$. We have then an equivalence of the braided tensor categories 
\begin{equation}
\catTLinf \; \xrightarrow{\quad \sim \quad} \; \catVir_p
\end{equation}
such that it reduces to an equivalence from Prop.~\ref{prop:equiv-1} if the categories are considered just as abelian $\oC$-linear categories.
\end{Conj}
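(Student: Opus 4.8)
\emph{Overall strategy.} Proposition~\ref{prop:equiv-1} already supplies an equivalence of abelian $\oC$-linear categories $\mathcal{E}\colon\catTLinf\xrightarrow{\sim}\catVir_p$, so the work is to lift $\mathcal{E}$ to a braided monoidal functor. Concretely one must produce a tensor structure $J_{M,M'}\colon\mathcal{E}(M\fuslim M')\xrightarrow{\cong}\mathcal{E}(M)\fusV\mathcal{E}(M')$ together with $\mathcal{E}(\StTL0)\cong\VK_{1,1}$, check its compatibility with the two associators (pentagon/triangle) and with the braidings $\brlim$ and $c^{\catVir_p}$ (hexagons), and --- as a prerequisite --- know that $\fusV$ really is a bi-right-exact bifunctor on $\catVir_p$. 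Since $\catTLinf$ is a highest-weight category with enough projectives and both $\fuslim$ and $\fusV$ are right exact in each argument, it is enough to construct and verify everything on the standard objects $\StTL{j}$ (corresponding to the Kac modules $\VK_{1,1+2j}$) and on their projective covers $\PrTL{j}$ (corresponding to the staggered modules $\VP_{1,2j+1}$), and then extend along projective presentations. For $\q$ not a root of unity all the categories in sight are semisimple with Clebsch--Gordan-type fusion and the equivalence is essentially classical; this anchors the root-of-unity argument, which is where the content lies.

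\emph{The route through $\LQG$.} The cleanest way to manufacture $J$, I expect, is to factor $\mathcal{E}$ through the quantum group. By quantum Schur--Weyl duality $\TL{N}(\q+\q^{-1})$ and $\LQG$ are mutual commutants on $(\oC^2)^{\tensor N}$, with $\StTL{j}[N]$ the multiplicity space of the spin-$j$ Weyl module; the TL fusion of Definition~\ref{defRSfusion} and the braiding of Section~\ref{sec:braiding-TL} (built from the $\g_i$ of~\eqref{hecke-gen}, which is exactly the $\LQG$ $R$-matrix normalisation) are tailored so that under this duality $\fus$ corresponds to the quantum-group tensor product and $\g_{N_1,N_2}$ to the $R$-matrix braiding. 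Passing to the limit $N\to\infty$ with the arc-tower machinery of Section~\ref{limTLcat} should then yield a braided monoidal equivalence $\catTLinf\xrightarrow{\sim}\mathcal{C}(\q)$, where $\mathcal{C}(\q)$ is the braided category of finite-length $\LQG$-modules at $\q=e^{\rmi\pi/p}$ with standard objects the Weyl modules $V_j$, $j\in\half\oN$. The conjecture then reduces to the logarithmic Kazhdan--Lusztig-type statement that $\mathcal{C}(\q)$ is braided equivalent to $\catVir_p$ under $V_j\leftrightarrow\VK_{1,1+2j}$, which is expected from the free-field/$W$-algebra constructions of $\Vir_p$ and is at least consistent object by object with the diamond subquotient diagrams~\eqref{prTL-pic-inf} and~\eqref{stagg-pic-gen-dense-even}.

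\emph{Direct verification of the fusion and braiding data.} Independently of the quantum-group detour, one should compute and compare both sides on standards. On the TL side, Corollary~\ref{cor:M1M2} identifies $\StTL{j}\fuslim\StTL{k}$ with the $N$-stable limit of $\StTL{j}[N_1]\fus\StTL{k}[N_2]$, whose explicit standard filtration is given in~\cite{GV}; in the limit the binomial truncations drop out, leaving a filtration by $\StTL{l}$, $l=|j-k|,\dots,j+k$, carrying the root-of-unity pattern~\eqref{StTL-inf}. On the Virasoro side, the (logarithmic) Kac-module fusion $\VK_{1,1+2j}\fusV\VK_{1,1+2k}$ at $c(p)$ --- via the Nahm--Gaberdiel--Kausch algorithm or the HLZ theory --- is a $\VK_{1,1+2l}$-filtration over the same range with matching socles. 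Agreement of these computations, together with the unit normalisations $\StTL0\fuslim M\cong M$ (established after Theorem~\ref{thm:catTLinf}) and $\VK_{1,1}\fusV(-)\cong(-)$, fixes $J$ on standards. For braiding compatibility one evaluates $c^{\catVir_p}\circ J$ against $J\circ\mathcal{E}(\brlim)$ on each $\StTL{l}$-component: the TL side contributes a power of $\q$ in $(j,k,l)$ coming from the ribbon element of $\LQG$, the Virasoro side the conformal-spin phase $e^{\pi\rmi(h_{1,1+2l}-h_{1,1+2j}-h_{1,1+2k})}$ with $h_{1,1+2j}=\tfrac{(1-2j(p-1))^2-1}{4p(p-1)}$, and a direct substitution shows they agree up to the sign ambiguity already noted below~\eqref{afus-ex1}. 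The pentagon and hexagon identities on all of $\catTLinf$ then follow from Propositions~\ref{prop:ass-pentagon} and~\ref{prop:C-hexagon} and their Virasoro counterparts together with the right-exactness used to extend $J$ off the projectives.

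\emph{Main obstacle.} Two points are genuinely hard. First, $\fusV$ must be shown to preserve $\catVir_p$ and to be bi-exact there: for integer $p\geq3$ the central charges $c(p)$ are logarithmic, the staggered modules $\VP_{1,2j+1}$ violate naive $C_1$-cofiniteness, and the HLZ machinery does not apply off the shelf --- one would likely realise $\catVir_p$ inside a tamer ambient category (a $W$-algebra extension, or a Feigin--Fuchs resolution controlling $\fusV$ on the modules $\VF_{r,s}$ of~\cite{FF}). Second, and more fundamentally, the TL fusion is an induction along finite-$N$ algebra embeddings whereas $\fusV$ is a quotient of an infinite-dimensional space of chiral intertwiners, so there is no \emph{a priori} comparison functor between them; closing this gap rigorously --- whether via the $\LQG$ factorisation above, or via a lattice realisation of conformal blocks on the annulus and disc, or via a reconstruction theorem showing that a braided highest-weight structure on $\catVir_p$ with the given fusion of Kac modules is unique --- is what I expect to be the decisive difficulty. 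By contrast the verifications of the preceding paragraph are finite and explicit, and are already corroborated in the examples of~\cite{GJSV, RSA, BRSA}.
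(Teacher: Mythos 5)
This statement is Conjecture~\ref{conj:equiv-2}: the paper deliberately leaves it open and offers only supporting evidence (the abelian equivalence of Prop.~\ref{prop:equiv-1}, the match of fusion rules~\eqref{eqFusionKacGeneric} versus~\eqref{resfusopen} for generic $\q$, and the agreement with Nahm--Gaberdiel--Kausch computations at roots of unity). Your proposal is likewise not a proof, and you say as much in your final paragraph --- the two obstacles you identify (bi-exactness and module-category status of $\fusV$ on the logarithmic category $\catVir_p$, and the absence of any a priori comparison functor between an induction product on towers of finite-dimensional algebras and a quotient of chiral intertwiner spaces) are precisely the reasons the authors state this as a conjecture. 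So the honest assessment is that your text is a reasonable research programme whose ``direct verification'' paragraph reproduces the paper's own evidence, but it does not close the statement.

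Beyond that, your central intermediate step is concretely false as stated. You propose a braided monoidal equivalence $\catTLinf\xrightarrow{\sim}\mathcal{C}(\q)$ with $\mathcal{C}(\q)$ the category of finite-length $\LQG$-modules. But the paper shows (Sec.~5.6) that at $\q=e^{\rmi\pi/p}$ the category $\catTLinf$ has the \emph{non-simple} tensor unit $\StTL{0}$ and is \emph{not rigid}: the simples $\IrTL{j}$ with $0\leq j\leq\half(p-2)$ form a tensor ideal not containing $\StTL{0}$ and admit no duals. The category of finite-length modules over the Hopf algebra $\LQG$ is rigid with simple tensor unit, and a monoidal equivalence must preserve both properties; hence no such equivalence exists. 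Quantum Schur--Weyl duality, as used in~\cite{ReadSaleur07-2,GV}, yields an agreement of fusion \emph{multiplicities} between TL induction and the $\LQG$ tensor product on the relevant (tilting-type) modules, which is much weaker than a braided equivalence of the direct-limit category with $\LQG$-mod. Any viable factorisation would have to pass through a non-rigid quotient or subquotient category on the quantum-group side, and constructing that is essentially as hard as the conjecture itself. Your ribbon-element versus conformal-spin phase comparison is a genuine consistency check, but it only constrains the braiding on simple subquotients and cannot by itself produce the tensor structure $J$ on the non-semisimple objects.
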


The indications are the following.
The category $\catVir_p$ or the representation theory of $\Vir_p$ within the category $\catVir_p$ is believed to serve as the fundamental description of a class of conformal field theories (CFTs).
A fundamental question about CFTs is to determine the operator product expansions (OPEs) of their quantum fields. Thanks to the conformal symmetry, the OPEs are essentially determined by the fusion rules for the corresponding modules over the Virasoro algebra.
Going back to our conjecture for generic $\q$, it is well known  indeed that the OPEs of primary fields of conformal field theory associated with the Kac modules obey the following Virasoro fusion rules~\cite{BPZ, FZ}
\begin{equation}\label{eqFusionKacGeneric}
\displaystyle \VK_{1,1+2j_1} \fusV \VK_{1,1+2j_2} = \bigoplus_{j=|j_1-j_2|}^{j_1+j_2} \VK_{1,1+2j},
\end{equation}
which corresponds exactly to the result of fusion in the Temperley-Lieb case,
 or strictly speaking, to the decomposition for $\fuslim$ in our direct-limit category $\catTLinf$:
\begin{equation}
\StTL{j_1}\fuslim \StTL{j_2}=\bigoplus_{|j_1-j_2|}^{j_1+j_2}\StTL{j}\label{resfusopen}\ .
\end{equation}
Moreover, in the case $\q$ a root of unity,  fusion of the Temperley-Lieb modules~\cite{ReadSaleur07-2,GV,Belletete} can again be compared with fusion in the corresponding (logarithmic) conformal field theory based on calculations of logarithmic OPEs in~\cite{GurarieLudwig1} and in~\cite[Sec.~5]{GV} (see also more references therein)
and in the works~\cite{Flohr,MathieuRidout,MathieuRidout1,GRW09} that  use the so-called
Nahm--Gaberdiel--Kausch algorithm~\cite{Nahm,KauschGaberdiel}.
Having the identification from Prop.~\ref{prop:equiv-1} between the modules from both sides, from $\catTLinf$ and $\catVir_p$, we have as well an identification of the fusion rules (or multiplicities of the modules in tensor products of two indecomposables) for all the cases explored so far on the Virasoro side. This agreement motivates  Conj.~\ref{conj:equiv-2}, to which we hope to get back in subsequent work.

\subsection{Non-chiral case and affine TL category $\catATLinf$}

The physics of critical statistical lattice models away from their boundaries (the so called ``bulk'' case)  is described by two copies of the Virasoro algebra, corresponding to the chiral and anti-chiral dependencies of the correlation functions. While in the case of rational conformal field theories, most properties in the bulk  can be inferred from those near the boundary~\cite{BPPZ}, no such relationship is known to exist in general (see \cite{RGW} for a discussion) in the case of logarithmic conformal field theories. Meanwhile, lattice models away from their boundaries are obtained  by choosing periodic boundary conditions, which corresponds to considering now the affine  instead of the finite Temperley-Lieb algebra. 

In a series of works on some simple cases, we have begun to explore \cite{GRS1,GRS2,GRS3,GRSV1} the relationship between modules of $\ATL{N}(m)$ and modules of products of two Virasoro algebras with central charge (\ref{centch}). This had led to a deeper understanding of the subquotient structures appearing in logarithmic CFT for values $\q=i,\q=e^{i\pi/3}$,  (corresponding to central charges $c=-2$ and $c=0$), and to the introduction of the promising concept of interchiral algebra.

In order to go further in our understanding of LCFTs by using lattice models, it is necessary to 
understand  fusion of non-chiral fields, i.e., the fusion of modules over the product of two Virasoro algebras.  Since in the chiral case we observed a full correspondence between fusion in TL and  Virasoro fusion rules, it is natural to expect we can learn something about fusion of non-chiral fields by studying  fusion of modules of the affine Temperley-Lieb algebra.

 It is not so simple  to make progress in this direction however. First, 
we saw that doing a direct calculation of fusion in the periodic case is technically much harder than in the open because of complicated relations between different words in $g_i$'s and $e_j$'s.
We thus need to find another and more constructive way to compute the fusion in periodic systems: this will be  studied in our next paper~\cite{GJS}. Second, note that  the fusion we have defined in the periodic case does not ``reduce'' to fusion in the open case once we restrict to  the finite Temperley-Lieb (sub)algebra. Indeed, from 
the $\TL{N}$ module
\begin{equation}
\StJTL{j}{z}[N] = \bigoplus_{k=j}^{N/2} \StTL{k}[N]
\end{equation}
we see the TL fusion of the right hand side of this equation  using $\fus$ and thus (\ref{resfusopen}) gives a direct sum over many TL modules, while the fusion of the left hand side using $\afus$ gives in most cases a trivial result. Moreover, it is easy to check that even when this fusion is non trivial, the result does not decompose over $\TL{N}$ according to the tensor product $\fus$. This is a priori different from what one would expect in conformal field theory, where fusion of non-chiral fields is expected to decompose in some simple way in terms of the fusion of the chiral components. %
 It would be also interesting to understand better our semi-braiding in $\catATLinf$ and its relation to non-chiral conformal field theory. We will  discuss what happens  in our next paper~\cite{GJS}. 

\bigskip

Meanwhile, we note that it would be  interesting to find a duality with a quantum algebra. Recall that in the finite TL case, there is a well known  duality with (a finite-dimensional quotient of) the quantum algebra $U_{\q} sl(2)$:  
this duality was actually used in~\cite{ReadSaleur07-2} (for $p=2,3$ and for projective objects) and then in~\cite{GV} to compute a large and almost exhaustive list of fusion rules for a pair of indecomposable TL modules at any root of unity. One could then expect that  in the periodic case there is a duality but now with (a quotient of) the affine quantum algebra $U_{\q} \widehat{sl}(2)$.
The idea would be then to use this duality and compute the affine TL fusion  using the coproduct in the affine quantum group \cite{ChariPressley}. 
We leave this interesting problem for a future work.

\appendix

\section{Affine TL fusion: examples}\label{app:fusion-Hecke} \label{app:examples}
In this section we consider the affine TL fusion from the perspective of the affine Hecke algebra calculations. 
More precisely, we compute the right-hand side of~\eqref{H-T-fusion} and
the results of these calculations agree with the diagrammatical calculation of the fusion in the main text of the paper
in Sec.~\ref{sec:ex-fus}. This also supports our Conj.~\ref{conj:H-T}.

\subsection{$j_1=j_2=1/2$}
We consider again the  fusion of  $\StJTL{\half}{z_1}[1]$ and $\StJTL{\half}{z_2}[1]$ 
discussed in Sec.~\ref{sec:ex-fus-1}
but now from the point of view of the affine Hecke algebra, i.e. we are going to analyze    $\StJTL{\half}{z_1}[1]\ahfus\StJTL{\half}{z_2}[1]$ where $\ahfus$ is introduced in~\eqref{aH-fusfunc-def}. With the normalizations adopted in this paper, we use $g_i,x_i$ instead of $\sigma_i,y_i$ and modify the relations (\ref{theyrelations}) into 
\begin{equation}\label{gxg}
g_ix_ig_i=x_{i+1}
\end{equation}
together with 
\begin{equation}
u=x_1g_1\ldots g_{N-1}
\end{equation}
instead of (\ref{identif}). 
%From
Recall then~\eqref{vv-act}:
\begin{equation*}
 u^{(1)} \vv = z_1 \vv,\qquad u^{(2)} \vv = z_2 \vv\ ,
 \end{equation*}
we therefore have
\begin{equation}
 x_1 \vv = z_1 \vv,\qquad x_2 \vv = z_2 \vv\ .
 \end{equation}
We introduce $\ww=g_1\vv$ and note that the two vectors $\vv$ and $\ww$ form a basis in the induced module $\StJTL{\half}{z_1}[1]\ahfus\StJTL{\half}{z_2}[1]$. This is easy to see using the relations  
$x_i g_i = x_{i+1}g_i^{-1}$, recall~\eqref{gxg}.
We use then these affine Hecke relations to obtain the matrix representation of the generators in the space  $\oC\vv\oplus \oC\ww$:
\begin{equation}
x_1=\left(\begin{array}{cc}
z_1&-i\q^{-1/2}z_2(1-\q^2)\\
0&-\q z_2\end{array}\right),~~x_2=\left(\begin{array}{cc}
z_2&i\q^{-3/2}(\q^2-1)z_2\\
0&-\q^{-1}z_1\end{array}\right)\ .
\end{equation}
We have meanwhile
\begin{equation}
g_1=\left(\begin{array}{cc}
0&-\q^{-1}\\
1&-i\q^{1/2}(\q^{-2}-1)
\end{array}\right)
\end{equation}
and similarly for $g_2$ with a bit more complicated matrix. We check then that  $e_1$ and $e_2$ defined via $e_i=\q+\rmi\q^{1/2}g_i$ do satisfy the Temperley--Lieb relations. Moreover,  $u=x_1g_1$ obeys $u^2=z_1z_2 \one$ (recall that $u^N$ is central). Finally, we find  
\begin{eqnarray}
e_1e_2e_1=(z+z^{-1})^2 e_1\nonumber\\
e_2e_1e_2=(z+z^{-1})^2 e_2
\end{eqnarray}
with $z=\pm i\q^{-1/2}\sqrt{z_1z_2^{-1}}$. For  $N=2$, the second relation in (\ref{TLpdef-u2}) becomes simply $u^2 e_1=e_1$ and combining with $u^2=z_1z_2 \one$ this gives the condition $z_1 z_2=1$. So, the result of the fusion is therefore zero unless    $z_2=z_1^{-1}$, and thus  $z=- i\q^{-1/2}z_1$
(with the same sign convention as discussed in the main text in Sec.~\ref{sec:ex-fus-1}). 
Hence, we have found 
\begin{equation}\label{eq:app-ex-1}
\StJTL{\half}{z_1}[1]\afus\StJTL{\half}{z_2}[1] =
\StJTL{0}{- i\q^{-\half}z_1}[2]\ \qquad \text{when}\; z_2=z_1^{-1}.\\
\end{equation}

The result~\eqref{eq:app-ex-1} is  obtained assuming $\ww$ is linearly independent of $\vv$. Otherwise, one gets $g_1\vv=\lambda \vv$ where $\lambda$ is a constant easily determined -- using the relation between $g^2_i$ and $g_i$  -- to be $\lambda=i\q^{1/2}$. It follows that 
\begin{equation}\label{eq:app-ex-1-1}
\StJTL{\half}{z_1}[1]\afus\StJTL{\half}{z_2} [1]=
\StJTL{1}{ i\q^{\half}z_1}[2]\ \qquad \text{when}\; z_2=-\q z_1.\\
\end{equation}
In other words, the induced affine Hecke module admits an invariant subspace at $z_2=-\q z_1$ and the quotient by this submodule allows the action of $\ATL{2}(\q+\q^{-1})$. This result can be interpreted as follows: 
for generic values of $z_1$ and $z_2$ the induced module $\StJTL{\half}{z_1}[1]\ahfus\StJTL{\half}{z_2}[1]$ does not admit the action of $\ATL{2}$ because the ideal $\I$ from~\eqref{TH} generates the whole module, so the right hand side of~\eqref{H-T-fusion} is zero, while for $z_2=z_1^{-1}$ the ideal $\I$ acts as zero and~\eqref{H-T-fusion} gives~\eqref{eq:app-ex-1}, and for $z_2=-\q z_1$ it generates a one-dimensional invariant subspace and~\eqref{H-T-fusion} gives~\eqref{eq:app-ex-1-1}, and these are all possible cases.

Combining~\eqref{eq:app-ex-1} with~\eqref{eq:app-ex-1-1}, we see that our affine Hecke calculation of the affine TL fusion (under the result in~\eqref{H-T-fusion} and Conj.~\ref{conj:H-T}) is
in agreement with the previous diagrammatical calculation in the main text resulted in~\eqref{afus-ex1}.

\subsection{$j_1=0$ and $j_2=1/2$}

The definition of  fusion $\afus$ holds for all modules of course, not just the standard ones. As an example, we consider here the case of ${\StJTL{0}{\q}}[N]$, which is well known \cite{GL} to be reducible with a submodule isomorphic to $\StJTL{1}{1}[N]$, and admits a simple (for generic $\q$) quotient ${\StJTL{0}{\q}}/{\StJTL{1}{1}}\equiv \bAStTL{0}{\q}[N]$ of dimension $ \hat{d}_{0}[N] - \hat{d}_{1}[N]$, see~\eqref{dim-dj}. 

Restricting  to the simplest case $N=2$,  the module $\bAStTL{0}{\q}[2]$   is one-dimensional and has the basis vector $\uu$ with the action of the affine Hecke generators: $g_1\uu=-iq^{-3/2}\uu$, and $u^{(1)}\uu=\uu$, and $x_1\uu=-iq^{-3/2}\uu$. We consider then the fusion $\ahfus$ with a one dimensional module $\StJTL{\half}{z}[1]$. Within the induced module, the vectors $g_2\uu\equiv \vv$ and $g_1\vv\equiv \ww$ are linearly  independent and form a basis with $\uu$. We then find in the $\uu,\vv,\ww$ basis the matrix representation: 
\begin{equation}
g_1=\left(\begin{array}{ccc}
i\q^{-3/2}&0&0\\
0&0&-\q^{-1}\\
0&1&i\q^{1/2}(1-\q^{-2})
\end{array}\right),\qquad g_2=\left(\begin{array}{ccc}
0&-\q^{-1}&0\\
1&iq^{1/2}(1-\q^{-2})&0\\
0&0&-i\q^{-3/2}
\end{array}\right)\ .
\end{equation}
It is easy to check however that the  generators $e_i\equiv \q+\rmi\q^{1/2}g_i$ now do not satisfy the required relation $e_ie_{i+1}e_i=e_i$ for $i=1,2$. It is necessary to take a quotient implying the linear relation $\uu=-i \q^{-1/2}\vv+\q^{-1}\ww$. In the $\vv,\ww$ basis for instance, one finds now
\begin{equation}
g_1=\left(\begin{array}{cc}
0&-\q^{-1}\\
1& i\q^{1/2} (1-\q^{-2})
\end{array}\right)
\end{equation}
and 
\begin{equation}
g_2=\left(\begin{array}{cc}
i\q^{1/2}&0\\
-\q^{-2}&-i\q^{-3/2}\end{array}\right)
\end{equation}
together with 
\begin{equation}
x_1=\left(\begin{array}{cc}
i\q^{3/2}&0\\0&z\end{array}\right),\qquad x_2=\left(\begin{array}{cc}
-z\q^{-1}&iz\q^{-5/2}(\q^2-1)\\
iz\q^{-3/2}(\q^2-1)&-i\q^{1/2}-z\q^{-3}(\q^2-1)^2\end{array}\right)
\end{equation}
and a slightly more complicated expression for $x_3$. The point however is that, in this restricted space,  $[x_i,x_j]=0$ if and only if $z=i\q^{3/2}$. 
 We find then
\begin{equation}
u=\left(\begin{array}{cc}
i\q^{-3/2}&-\q^{-1}\\
1-\q^2-\q^{-2}&i(\q^{1/2}-\q^{-3/2})
\end{array}\right)
\end{equation}
and check that, in the quotient, we have
\begin{equation}
u^2 e_2=e_1e_2
\end{equation}
while $u^3=i\q^{3/2}\one$.  In other words, the ideal  $\I$ from~\eqref{TH} does not generate the whole module but only a proper invariant subspace only at $z=i\q^{3/2}$.
The resulting quotient-module being two dimensional is a quotient of the standard  (three dimensional) $\ATL{3}$-module ${\StJTL{\half}{z}}[3]$ when $z=i\q^{3/2}$: it is known that ${\StJTL{\half}{i\q^{3/2}}}$ is reducible and admits a two dimensional irreducible quotient-module $\bAStTL{1/2}{i\q^{3/2}}[3]$, and it is the only two-dimensional irreducible module for $\ATL{3}$ at generic $\q$. Therefore, we obtain the affine TL fusion
\begin{equation}\label{eq:app-ex-2}
\bAStTL{0}{\q}[2]\afus\StJTL{\half}{z}[1] =
\begin{cases}
\bAStTL{1/2}{z}[3] \qquad &z=i\q^{3/2}\ ,\\
0\qquad & \text{otherwise}\ .
\end{cases} 
\end{equation}
again using the result in~\eqref{H-T-fusion} and Conj.~\ref{conj:H-T}. We finally note that the result~\eqref{eq:app-ex-2} will be also confirmed in our next paper~\cite{GJS}.

\subsection{$j_1=1/2$ and $j_2=1$}\label{app:fusion-example-3}
Finally we consider the fusion $\StJTL{\half}{z_1}[1]\afus\StJTL{1}{z_2}[2]$ with the first module on one site the second on two sites. Starting with the only basis element $\uu$ in the ordinary tensor product  $\StJTL{\half}{z_1}[1]\otimes\StJTL{1}{z_2}[2]$  we again generate the two more basis elements $g_1\uu=\vv$ and $g_2\vv=\ww$ in the induced module. Using relations  in the module  $\StJTL{1}{z_2}$, 
we have $u^{(2)}\uu = z_2 \uu$ and on the other hand
  $u^{(2)}\uu=i\q^{1/2} x_2\uu$ (because $u^{(2)}=x_2g_2$) and therefore $x_2\uu=-i\q^{-1/2}  z_2\uu$, while $x_1\uu=z_1\uu$. The defining relations lead, in the $\uu,\vv,\ww$ basis, to 
\begin{equation}
x_1=\left(\begin{array}{ccc}
z_1&z_2(\q-\q^{-1})&i\q^{1/2} z_2(\q-\q^{-1})\\
0&i\q^{1/2} z_2&0\\
0&0&i\q^{1/2} z_2
\end{array}\right)
\end{equation}
and similar matrix expressions for the other generators.
%The parameter $z_2$
 The generators $g_1$ and $g_2$ give rise to generators $e_1,e_2$ that satisfy the Temperley--Lieb relations:
\begin{equation}
e_1=\left(\begin{array}{ccc}
\q&-i\q^{-1/2}&0\\
i\q^{1/2}&\q^{-1}&0\\
0&0&0\end{array}\right),\qquad e_2=\left(\begin{array}{ccc}
0&0&0\\
0&\q&-i\q^{-1/2}\\
0&i\q^{1/2}&\q^{-1}\end{array}\right)
\end{equation}
One also finds $u^3=z_1z_2^2\one$. However, the $\ATL{3}$ relation $u^2 e_2=e_1e_2$ implies the constraint $z_1 z_2=i\q^{1/2}$, and so $u^3= i\q^{1/2} z_2\one$. This leads to the three dimensional module  $\StJTL{\half}{i\q^{1/2}z_2}[3]=\StJTL{\half}{-\q z_1^{-1}}[3]$, or  in other words, we have shown that  the ideal  $\I$ from~\eqref{TH} acts by zero  only at  $z_1=i\q^{1/2}z_2^{-1}$.

We can also   consider the case where the ideal $\I$ generates an invariant subspace. It happens indeed when $\vv$ is proportional to $\uu$, which implies that $\ww$ is proportional to $\uu$ as well. After taking the corresponding quotient, one finds then $e_1=e_2=0$ and $u=-\q z_1\one$, while $z_2=-i\q^{3/2}z_1$. It follows that the quotient is the (one-dimensional) standard module $\StJTL{3/2}{-\q z_1}=\StJTL{3/2}{-i\q^{-1/2} z_2}$. 

We thus conclude that the affine TL fusion is zero at all values of $z_1$ and $z_2$ except the following  cases:
\begin{align}
\StJTL{\half}{z_1}[1]\afus\StJTL{1}{z_2}[2]&=\StJTL{\half}{i\q^{1/2}z_2}[3],\qquad z_1=i\q^{1/2}z_2^{-1}\ ,\\
\StJTL{\half}{z_1}[1]\afus\StJTL{1}{z_2}[2]&=\StJTL{3/2}{-i\q^{-1/2} z_2}[3],\qquad z_1=i\q^{-3/2} z_2\ ,
\end{align}
which is  in agreement with~\eqref{afus-ex2}.

\end{document}